\newcommand{\N}{\mathbb{N}}						% Nonnegative integers
\newcommand{\R}{\mathbb{R}}						% Reals
\newcommand{\C}{\mathbb{C}}						% Complex numbers
\renewcommand{\S}{\mathbb{S}}					% Sphere
\newcommand{\eps}{\varepsilon}					% Epsilon shortcut
\newcommand{\dd}								% Differential d
	{\mathop{}\!\mathrm{d}}						
\newcommand{\ddn}[1]							% Powers of a differential d
	{\mathop{}\!\mathrm{d^{#1}}}
\newcommand{\abs}[1]							% Absolute value
	{\left| #1 \right|}
\newcommand{\smallabs}[1]						% Small absolute value bars which won't scale to the argument.
	{\lvert #1 \rvert}	
\newcommand{\norm}[1]							% Norm 
	{\left\lVert #1 \right\rVert}	
\newcommand{\smallnorm}[1]						% Small norm bars which won't scale to the argument.
	{\lVert #1 \rVert}						
\newcommand{\ip}[2]								% Inner product
	{\left< #1 , #2 \right>}
\DeclareMathOperator{\id}{id}					% Identity
\DeclareMathOperator{\vol}{vol}					% Volume (and the volume form)
\DeclareMathOperator{\spt}{spt}
\DeclareMathOperator{\dist}{dist}					% Support
\DeclareMathOperator{\osc}{osc}					% Oscillation
\DeclareMathOperator{\maxfun}{{\bf M}}			% Maximal function
\newcommand{\loc}{\mathrm{loc}}					% Local spaces
\newtheorem{thm}{Theorem}[section]{\bf}{\it}
\newtheorem{lemma}[thm]{Lemma}
\newenvironment{customthm}[1]
	{\innercustomthm}
	{\endinnercustomthm}
\theoremstyle{definition}
\newtheorem{defn}[thm]{Definition}
\newtheorem{ex}[thm]{Example}
\theoremstyle{remark}
\numberwithin{equation}{section}
\begin{document}
	
\title{On the heterogeneous distortion inequality}
\author{Ilmari Kangasniemi}
\address{Department of Mathematics, Syracuse University, Syracuse,
NY 13244, USA }
\email{kikangas@syr.edu}

\author{Jani Onninen}
\address{Department of Mathematics, Syracuse University, Syracuse,
NY 13244, USA and  Department of Mathematics and Statistics, P.O.Box 35 (MaD) FI-40014 University of Jyv\"askyl\"a, Finland
}
\email{jkonnine@syr.edu}
	
\thanks{J. Onninen was supported by the NSF grant  DMS-1700274.}
\subjclass[2020]{Primary 30C65; Secondary 35B53, 35R45, 53C21}
\keywords{Heterogeneous distortion inequality, Quasiregular mappings, Liouville theorem, H\"older continuity, Astala-Iwaniec-Martin question}
	
\begin{abstract}
	We study Sobolev mappings $f \in W_{\loc}^{1,n} (\R^n, \R^n)$, $n \ge 2$, that  satisfy the heterogeneous distortion inequality
	\[\abs{Df(x)}^n \leq K J_f(x) + \sigma^n(x) \abs{f(x)}^n\] 
	for almost every $x \in \R^n$. Here $K \in [1, \infty)$ is a constant and $\sigma \geq 0$ is a function in  $L^n_\loc(\R^n)$.  Although we recover the class of $K$-quasiregular mappings when $\sigma \equiv 0$, the theory of arbitrary solutions is significantly more complicated, partly due to the unavailability of a robust degree theory  for non-quasiregular solutions. Nonetheless, we obtain a Liouville-type theorem and the sharp H\"older continuity estimate for all solutions, provided that  $\sigma \in  L^{n-\eps}(\R^n) \cap L^{n+\eps}(\R^n)$ for some $\eps >0$.
	This gives an affirmative answer to a question of Astala, Iwaniec and Martin.
\end{abstract}
	
\maketitle
	
\section{Introduction}
Let $\Omega$ be a  connected, open subset of $\R^n$ with $n \ge 2$. We study mappings $f=(f_1, \dots , f_n) \colon \Omega \to \R^n$ in the Sobolev space $W_{\loc}^{1,n} (\Omega, \R^n)$  that satisfy the \emph{heterogeneous distortion inequality}
\begin{equation}\label{eq:main_definition}
	\abs{Df(x)}^n \leq K J_f(x) + \sigma^n(x) \abs{f(x)}^n
\end{equation}
for a.e.\ (almost every) $x \in \Omega$, where $K \in [1, \infty)$ and $\sigma \in L_{\loc}^n(\Omega)$. Here, $\abs{Df(x)}$ is the operator norm of the weak derivative $Df(x) \colon \R^n \to \R^n$ of $f$ at a point $x \in \Omega$; that is, $\abs{Df(x)} = \sup \{\abs{Df(x) h} : h \in \S^{n-1} \}$. Moreover, $J_f(x) = \det Df(x)$ is the Jacobian determinant of $f$. It is worth noting that the natural Sobolev space in which to seek the solutions is $W^{1,n}_{\loc} (\Omega, \R^n)$. The reason for this is that this space provides enough regularity to apply integration by parts to the form $J_f \dd x$.
	
If  $\sigma \equiv 0$ in~\eqref{eq:main_definition}, we recover the \emph{mappings of bounded distortion}, also
known as $K$-\emph{quasiregular} maps. Homeomorphic $K$-quasiregular maps are also commonly called \emph{$K$-quasiconformal}.  The theory of mappings of bounded distortion  arose from the need to generalize  the geometry of holomorphic functions to higher dimensions, and is  by now a central topic in modern analysis with important connections to partial differential equations, complex dynamics, differential geometry and the calculus of variations; see the monographs by Reshetnyak \cite{Reshetnyak-book}, Rickman \cite{Rickman_book}, Iwaniec and Martin \cite{Iwaniec-Martin_book}, and Astala, Iwaniec and Martin \cite{Astala-Iwaniec-Martin_Book}. The last 20 years have also seen widespread study of a more general class of deformations, \emph{mappings of finite distortion}, where the constant $K$ is replaced by a finite function $K \colon \Omega \to [1, \infty)$; see e.g.\ the monographs \cite{Iwaniec-Martin_book} and~\cite{Hencl-Koskela-book}.

A core part of the theory of quasiregular mappings is that the distortion estimate implies several strong topological properties. In higher dimensions this was pioneered
in a series of papers by Reshetnyak (1966--1969). Accordingly,
spatial quasiregular mappings enjoy the following properties: 
\begin{enumerate}[label=(\roman*)]
	\item\label{it:holder} A $K$-quasiregular mapping is locally $1/K$-H\"older continuous~\cite{Reshetnyak_continuity};
	\item\label{it:disopen} A nonconstant quasiregular mapping is both discrete and open~\cite{Reshetnyak_QROrigin, Reshetnyak_Theorem2};
	\item\label{it:liouville} A bounded quasiregular mapping in $\R^n$ is constant~\cite{Reshetnyak_Liouville}.
\end{enumerate}
Recall that a mapping $f$ is open if $f(U)$ is open for every open $U$, and that $f$ is discrete if the sets $f^{-1}\{y\}$ consist of isolated points. Generalizations of these results also hold for mappings of finite distortion, assuming sufficient integrability conditions on the distortion function; see e.g.\ \cite{Vodopanov-Goldstein-cont, Iwaniec-Koskela-Onninen-Invent, Iwaniec-Sverak, Manfredi-Villamor, Hencl-Rajala}.

Solutions to the heterogeneous distortion inequality~\eqref{eq:main_definition} generally lack  these powerful conditions, and therefore require new tools for their treatment. For instance, if $g \colon \Omega \to \R$ in $W^{1,n} (\Omega)$ and $f(x)= (e^{g(x)}, 0, \dots, 0)$, then $f$ satisfies \eqref{eq:main_definition} with $\sigma=|\nabla g|\in L^n(\Omega)$. This results in a discontinuous solution $f$ by simply choosing  a discontinuous $g \in W^{1,n} (\Omega)$. Similarly, by choosing a smooth, compactly supported $g$, we have that $f$ satisfies \eqref{eq:main_definition} with $\sigma\in L^p(\R^n)$ for every $p \in [1, \infty]$, yet $f$ is neither constant nor discrete or open. Hence, the heterogeneous distortion inequality even  with  a regular $\sigma$ is too weak for its solutions to satisfy the above conditions \ref{it:disopen} and~\ref{it:liouville}, and therefore the use of e.g.\ degree theory~\cite{Fonseca-Gangbo-book} is unavailable for non-quasiregular solutions.

However, in the planar case $n = 2$, Astala, Iwaniec and Martin showed that entire solutions $f$ of \eqref{eq:main_definition} which vanish at infinity  do satisfy a counterpart of the Liouville  theorem \ref{it:liouville}, provided that $\sigma$ is sufficiently regular; see~\cite[Theorem 8.5.1]{Astala-Iwaniec-Martin_Book}. This uniqueness theorem has found  several important applications, such as the nonlinear $\overline{\partial}$-problem in the theory of holomorphic motions~\cite{holomorphicmotion} or the solution to the Calder\'on problem~\cite{Astala-Paivarinta}. It is for this reason that Astala, Iwaniec and Martin asked  in \cite[p.\ 253]{Astala-Iwaniec-Martin_Book} whether their form of the Liouville theorem and a version of the continuity result~\ref{it:holder} remain valid in higher dimensions.

\vskip 0.2cm	
\begin{AIMproblem}  
	{\it Suppose that  $f\in W_{\loc}^{1,n} (\R^n , \R^n)$ satisfies the heterogeneous distortion inequality~\eqref{eq:main_definition} with $\sigma \in L^{n+\eps}(\R^n) \cap L^{n-\eps}(\R^n)$ for some $\eps>0$. Under these assumptions, is the mapping $f$ continuous? Moreover, does $f$ satisfy the following Liouville-type theorem: if $f(x) \to 0$ as $\abs{x} \to \infty$, then $f\equiv 0$?} 
\end{AIMproblem}
\vskip 0.2cm
	
Along with their proof of the planar case, Astala, Iwaniec and Martin  provided a counterexample which shows that the assumption $\sigma \in L^{2}(\C)$ is not enough for the Liouville-type theorem; see~\cite[Theorem 8.5.2.]{Astala-Iwaniec-Martin_Book}. Note
that in the planar case,  the heterogeneous distortion inequality \eqref{eq:main_definition} with $\sigma \in L^{n+\eps}(\R^n) \cap L^{n-\eps}(\R^n)$ amounts to saying that the solutions $f\in W_{\loc}^{1,2} (\Omega, \C)$ satisfy the homogeneous
differential inequality 
\begin{equation}\label{eq:AstalaIwaniecMartin_definition}
	\abs{\partial_{\overline{z}} f} \leq k \abs{\partial_{z} f} + \tilde{\sigma} \abs{f},
\end{equation}
with $k < 1$ and $\tilde{\sigma}  \in L^{2+\eps}(\C) \cap L^{2-\eps}(\C)$. Moreover,~\eqref{eq:AstalaIwaniecMartin_definition} can in fact be expressed as a  linear heterogeneous Cauchy-Riemann system, and is thus uniformly elliptic. However, for $n\ge 3$, the theory is  nonlinear, and
the main inherent difficulty lies in the lack of general existence theorems
and counterparts to power series expansions of the solutions.

In this paper, we give an affirmative answer to  the Astala--Iwaniec--Martin question. The continuity is the more straightforward part of the solution, whereas the main challenge lies in proving the Liouville-type uniqueness theorem.
	
\subsection{Continuity}  The solutions to~\eqref{eq:main_definition} are indeed continuous when $\sigma \in L_{\loc}^{n+\eps}(\Omega)$. We in fact establish  sharp local H\"older continuity estimates for such mappings. 

For $\gamma \in (0,1]$ we denote the class of locally  $\gamma$-H\"older continuous mappings $f \colon \Omega \to \R^n$ by  $C^{0, \gamma}_\loc(\Omega, \R^n)$. We let  $\gamma_K := 1/K$ denote the sharp H\"older exponent of $K$-quasiregular mappings, and  $\gamma_\eps := \eps/(n+\eps)$ the sharp H\"older exponent of $W^{1, n+\eps}$-functions.  Our result shows that if $\gamma_K \not = \gamma_\eps$, then the sharp H\"older exponent $\gamma$ for solutions to~\eqref{eq:main_definition} is the minimum of the exponents $\gamma_K$ and  $\gamma_\eps$. There is, however, a somewhat surprising special case: if $\gamma_K$ and $\gamma_\eps$ coincide, then we in fact end up with an infinitesimally weaker H\"older exponent than $\gamma_K = \gamma_\eps$.
	
\begin{thm}\label{thm:Holder}
	Let $\Omega \subset \R^n$ be a domain,  $K \in [1, \infty)$, $\eps > 0$,  $\gamma= \min (\gamma_K, \gamma_\eps)$, and  $\sigma \in L_{\loc}^{n+\eps}(\Omega)$. Suppose that  $f \in W^{1,n}_\loc(\Omega, \R^n)$ satisfies the heterogeneous distortion inequality \eqref{eq:main_definition} with $K$ and $\sigma$.
		
	If $\gamma_K \neq \gamma_\eps$, then $f \in C^{0, \gamma}_\loc(\Omega, \R^n)$. Moreover, there exist $\sigma$ and $f$ satisfying the assumptions such that $f \notin C^{0, \gamma'}_\loc(\Omega, \R^n)$ for any $\gamma' > \gamma$.
		
	If $\gamma_K = \gamma_\eps$, then $f \in C^{0, \gamma'}_\loc(\Omega, \R^n)$ for every $\gamma' < \gamma$, and there exist $\sigma$ and $f$ satisfying the assumptions such that $f \notin C^{0, \gamma}_\loc(\Omega, \R^n)$.
\end{thm}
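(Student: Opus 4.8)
The plan is to deduce the H\"older estimate from a Morrey-type growth bound on the Dirichlet energy. By Morrey's Dirichlet growth theorem it is enough to show that, whenever $\overline{B(x_0,R_0)}\subset\Omega$, there is a constant $C$ depending only on $n,K,\eps,R_0,\norm{Df}_{L^n(B(x_0,R_0))}$ and $\norm{\sigma}_{L^{n+\eps}(B(x_0,R_0))}$ such that
\[\int_{B(x_0,\rho)}\abs{Df}^n\,\dd x\le C\rho^{n\gamma}\qquad\text{for all }0<\rho\le R_0\]
when $\gamma_K\neq\gamma_\eps$, and the same with $\rho^{n\gamma}$ replaced by $\rho^{n\gamma'}$ for each $\gamma'<\gamma$ when $\gamma_K=\gamma_\eps$. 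Before attacking the sharp bound I would first settle the qualitative claim that $f$ is continuous---hence locally bounded---via a Caccioppoli inequality for~\eqref{eq:main_definition} followed by Widman's hole-filling trick, which yields $f\in C^{0,\gamma_0}_\loc$ for some, not necessarily sharp, $\gamma_0>0$; the assumed integrability $\sigma\in L^{n+\eps}_\loc$ strictly better than $L^n_\loc$ is exactly what forces the $\sigma$-term in that argument to decay at a definite rate and keeps the iteration closed.

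With $f$ locally bounded, the bound is produced from $\int_{B(x_0,\rho)}\abs{Df}^n\le K\int_{B(x_0,\rho)}J_f+\int_{B(x_0,\rho)}\sigma^n\abs{f}^n$ by treating the two terms by different mechanisms. The lower-order term is handled by H\"older's inequality,
\[\int_{B(x_0,\rho)}\sigma^n\abs{f}^n\,\dd x\le\norm{f}^n_{L^\infty(B(x_0,\rho))}\,\norm{\sigma}^n_{L^{n+\eps}(B(x_0,\rho))}\,\abs{B(x_0,\rho)}^{\eps/(n+\eps)}\le C\rho^{n\gamma_\eps},\]
so it contributes exactly the exponent $\gamma_\eps$. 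For the quasiregular term I would exploit that the $n$-form $J_f\,\dd x=\dd\!\left(f_1\,\dd f_2\wedge\dots\wedge\dd f_n\right)$ is exact: Stokes' theorem on the sphere $\partial B(x_0,t)$, after subtracting the constant $f(x_0)$ and applying H\"older's inequality in the tangential directions, gives for a.e.\ $t$
\[\Bigl|\int_{B(x_0,t)}J_f\,\dd x\Bigr|\le C_n\,\osc_{B(x_0,t)}f\;t^{(n-1)/n}\Bigl(\tfrac{\dd}{\dd t}\!\int_{B(x_0,t)}\abs{Df}^n\,\dd x\Bigr)^{(n-1)/n}.\]
Feeding this back into~\eqref{eq:main_definition}, together with a Sobolev--Poincar\'e inequality on dyadic annuli to bound $\osc_{B(x_0,t)}f$ by the local energy, yields a Caccioppoli-type differential inequality that can be integrated (or iterated) into a decay estimate for the energy at the sharp homogeneous rate $n\gamma_K=n/K$. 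The delicate point is to make this rate come out exactly, not merely as some positive power: for genuine quasiregular maps this is classically obtained from modulus-of-continuity and condenser-capacity estimates that rest on path lifting and degree theory, which the paper deliberately avoids, so one must instead track constants carefully and work directly with the oscillation function $t\mapsto\osc_{B(x_0,t)}f$; I expect this to be the main obstacle.

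Granting a decay estimate of the form $\int_{B(x_0,\rho)}\abs{Df}^n\le A\bigl[(\rho/r)^{n\gamma_K}+\delta\bigr]\int_{B(x_0,r)}\abs{Df}^n+C\,r^{n\gamma_\eps}$ with $\delta$ small, the standard Morrey iteration lemma finishes the argument, and it naturally splits into three regimes: if $\gamma_\eps<\gamma_K$ the data term dominates and one gets the exponent $\gamma_\eps$; if $\gamma_\eps>\gamma_K$ the homogeneous term dominates and one gets $\gamma_K$; and if $\gamma_\eps=\gamma_K$ the resonance between the two exponents costs a logarithmic factor, producing $\int_{B(x_0,\rho)}\abs{Df}^n\lesssim\rho^{n\gamma}\log(1/\rho)$ and hence $f\in C^{0,\gamma'}_\loc$ for every $\gamma'<\gamma$ but not, in general, for $\gamma$ itself. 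This is precisely the trichotomy in the statement, and Morrey's theorem converts it to the claimed H\"older regularity. For the sharpness assertions I would use explicit radial examples: when $\gamma_K<\gamma_\eps$ the radial stretch $f(x)=\abs{x}^{1/K-1}x$ with $\sigma\equiv0$ is $K$-quasiregular and lies in $C^{0,\gamma_K}_\loc$ but in no smaller H\"older class near the origin; when $\gamma_K>\gamma_\eps$, take $f=(e^{g},0,\dots,0)$ as in the introduction with $g(x)=\psi(\abs{x})$, so~\eqref{eq:main_definition} holds with $\sigma=\abs{\nabla g}$, and choosing $\psi(t)\sim t^{\gamma_\eps}$ at the borderline normalization (which requires a logarithmic correction $\psi(t)\sim t^{\gamma_\eps}(\log\tfrac1t)^{-\beta}$ so that $\abs{\nabla g}\in L^{n+\eps}_\loc$ yet $f$ fails to be $\gamma'$-H\"older for $\gamma'>\gamma_\eps$) makes $f$ exactly $\gamma_\eps$-H\"older; and when $\gamma_K=\gamma_\eps$, the same construction with the log-corrected profile $\psi(t)\sim t^{\gamma}(\log\tfrac1t)^{-\beta}$ for a suitable $\beta$ produces $f\in C^{0,\gamma'}_\loc$ for all $\gamma'<\gamma$ yet $f\notin C^{0,\gamma}_\loc$, while still $\sigma=\abs{\nabla g}\in L^{n+\eps}_\loc$.
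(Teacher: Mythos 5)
Your plan is right in outline (Morrey-type energy decay plus a bootstrap to local boundedness, then explicit counterexamples), but the step you yourself flag as the main obstacle --- producing the homogeneous decay at \emph{exactly} the rate $n\gamma_K=n/K$ --- is genuinely missing, and the mechanism you propose would not deliver it. Your Stokes-based bound $\bigl|\int_{B_t}J_f\bigr|\le C_n\,\osc_{B_t}f\; t^{(n-1)/n}\bigl(\tfrac{\dd}{\dd t}\int_{B_t}\abs{Df}^n\bigr)^{(n-1)/n}$, followed by a Sobolev--Poincar\'e bound of $\osc_{B_t}f$ by the local energy, inserts a dimensional constant precisely where the exponent is decided, so the resulting iteration yields only \emph{some} positive decay rate rather than $n/K$. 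The paper avoids the oscillation altogether: it uses Reshetnyak's isoperimetric inequality for $W^{1,n}$ maps, $\bigl|\int_{B_r}J_f\bigr|\le (n\sqrt[n-1]{\omega_{n-1}})^{-1}\bigl(\int_{\partial B_r}\abs{D^\sharp f}\bigr)^{n/(n-1)}$, together with Hadamard's inequality and H\"older on the sphere, to get $\Phi(r)\le \tfrac{Kr}{n}\Phi'(r)+Br^{n\eps/(n+\eps)}$ with the \emph{sharp} constant $K/n$ in front of $r\Phi'(r)$; integrating this differential inequality gives directly the trichotomy $r^{n\gamma_\eps}$, $r^{n/K}$, or $r^{n\gamma}\log(1/r)$ in the resonance case, and hence the stated exponents (with the logarithm giving $\gamma'$-H\"older for all $\gamma'<\gamma$ when $\gamma_K=\gamma_\eps$). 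Incidentally, the classical sharp $1/K$-H\"older continuity of quasiregular maps is obtained by this same isoperimetric/Morrey argument, not by path lifting or degree theory, so the tool you thought unavailable is exactly the one that closes your gap; your preliminary bootstrap to local boundedness (run the estimate with a non-sharp exponent first, then rerun with $\delta=\eps$) parallels the paper and is fine.

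The second genuine gap is your sharpness example in the resonance case $\gamma_K=\gamma_\eps$. With the profile $\psi(t)\sim t^\gamma\log^{-\beta}(1/t)$ the map is \emph{better} than $t^\gamma$ near the origin, hence $\gamma$-H\"older, so it shows nothing; and if you flip the sign of the logarithmic exponent to make $f$ fail $\gamma$-H\"older continuity, then $\sigma=\abs{\nabla g}\sim t^{\gamma-1}\log^{\beta}(1/t)$ and, since $(\gamma_\eps-1)(n+\eps)=-n$, the relevant integral $\int_0 t^{-1}\log^{\beta(n+\eps)}(1/t)\,\dd t$ diverges, so $\sigma\notin L^{n+\eps}_\loc$. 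In fact no example of this degenerate type ($J_f\equiv 0$, a single non-vanishing component, $\abs{f}$ bounded above and below) can work in the resonance case: the distortion inequality then gives $\abs{\nabla f_1}\le \sigma\abs{f}\in L^{n+\eps}_\loc$, whence $f\in C^{0,\gamma_\eps}_\loc$ by the Morrey--Sobolev embedding. The critical growth must be carried by the Jacobian term, which is what the paper's Example 4.2 arranges: the shifted radial stretch $\abs{x}^{1/K}\log^{1/(2nK)}(e/\abs{x})\,x/\abs{x}+(2,0,\dots,0)$ has its leading behaviour absorbed by $KJ_f$, while $\sigma$ only compensates a lower-order logarithmic correction, and this is exactly why $\sigma\in L^{n+\eps}$ can coexist with failure of $\gamma$-H\"older continuity there. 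Your examples for the other two cases ($\gamma_K<\gamma_\eps$: the radial stretch with $\sigma\equiv 0$; $\gamma_K>\gamma_\eps$: the one-component map with a log-corrected $t^{\gamma_\eps}$ profile and $\beta(n+\eps)>1$) essentially coincide with the paper's and are correct.
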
 

\subsection{Liouville Theorem}
Recall that the classical Liouville theorem asserts that bounded entire holomorphic functions are constant~\cite{Cauchy}. Its quasiregular counterpart~\ref{it:liouville} follows from the Caccioppoli inequality~\cite{Caccioppoli}, which controls the derivatives of a quasiregular mapping locally in terms of the mapping. Attempting the same approach under the assumptions of the Astala-Iwaniec-Martin question yields that the integral of the Jacobian over the entire space $\R^n$ equals zero, or equivalently, that
\begin{equation}\label{eq:natural}
	\int_{\R^n} \abs{Df}^n \le \int_{\R^n} \abs{f}^n \sigma ^n. 
\end{equation}
Analogously to the quasiregular theory~\cite{Gehring}, the local variants of the \emph{energy estimate}~\eqref{eq:natural} imply a higher degree of integrability for the derivatives of a solution. These observations can be further combined with the nonlinear Hodge theory developed by Iwaniec and Martin~\cite{Iwaniec-Martin_Acta, Iwaniec_Annals}, which provides $L^p$-estimates for $\abs{Df}$ with exponents $p$ either smaller or larger than the dimension $n$. However, this approach alone appears insufficient for the desired Liouville-type theorem, and although it could be used to show the continuity of $f$, the proof would not yield the sharp H\"older exponents we obtain in Theorem \ref{thm:Holder}.

Our first Liouville-type result shows that if $\sigma \in L^n(\R^n) \cap L^{n+\eps}_\loc(\R^n)$, then nontrivial solutions to \eqref{eq:main_definition} that are bounded in $\R^n$ do not
vanish at any point. Note that the standard radial example of a $K$-quasiregular mapping $f(x)=\abs{x}^{\frac{1}{K}-1}x$ shows that  this result has no local alternative.
	
\begin{thm}\label{thm:nonzero}
	Suppose that $f \in W^{1,n}_\loc(\R^n, \R^n)$ satisfies the heterogeneous distortion inequality \eqref{eq:main_definition} with $K \in [1, \infty)$ and $\sigma \in L^{n}(\R^n) \cap L^{n+\eps}_\loc(\R^n)$ for some $\eps > 0$. If $f$ is bounded and $f$ is not identically zero, then $0 \notin f(\R^n)$.
\end{thm}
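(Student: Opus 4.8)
The natural strategy is to argue by contradiction: assume $f$ is bounded and nontrivial but $f(x_0) = 0$ for some $x_0 \in \R^n$, and derive a contradiction.

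First I would record the \emph{energy estimate}. Testing \eqref{eq:main_definition} against $\varphi^n$ for cutoff functions $\varphi$, integrating by parts the Jacobian term $\int J_f \varphi^n$ (legitimate since $f \in W^{1,n}_\loc$), and then using $\smallnorm{f}_\infty < \infty$ together with $\sigma \in L^n(\R^n)$, gives $\int_{\R^n} J_f = 0$ and $\int_{\R^n} \abs{Df}^n \le \int_{\R^n}\sigma^n\abs{f}^n \le \smallnorm{f}_\infty^n \smallnorm{\sigma}_{L^n(\R^n)}^n < \infty$, i.e.\ \eqref{eq:natural}; in particular the $n$-energy of $f$ on $\R^n \setminus B(x_0,r)$ tends to $0$ as $r \to \infty$. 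The higher-integrability and continuity machinery behind Theorem~\ref{thm:Holder} (which uses $\sigma \in L^{n+\eps}_\loc$) moreover shows $f$ is continuous, so that, translating $x_0$ to the origin, $m(r) := \sup_{B(0,r)}\abs{f}$ satisfies $m(r) \to 0$ as $r \to 0^+$ while $m(r) > 0$ for all large $r$ since $f \not\equiv 0$.

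The core is a potential-theoretic argument on the open set $\{f \neq 0\}$ for $u := \log\abs{f}$. A computation from \eqref{eq:main_definition} — dividing by $\abs{f}^n$, expressing $\nabla u$ through $f$ and $Df$, and absorbing the term $K J_f/\abs{f}^n$ — should show that $u$ satisfies a differential inequality of $n$-Laplacian type with a controlled inhomogeneity, say $\int \cA(x,\nabla u)\cdot\nabla\varphi \, dx \le c\int \sigma^n \varphi \, dx$ for all nonnegative test functions $\varphi$, where $\cA$ has $n$-Laplacian structure with constants depending only on $n$ and $K$; that is, $u$ is an $\cA$-subsolution up to the $L^1(\R^n)$ error $c\,\sigma^n$. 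Truncating from below, $w := \max(u,-M) = \log\max(\abs{f},e^{-M})$ inherits the same inequality \emph{globally on} $\R^n$ — across $f^{-1}\{0\}$ it is locally the constant $-M$ — and $-M \le w \le \log\smallnorm{f}_\infty$. One would then run the Caccioppoli estimate for $w$ with \emph{logarithmic} cutoffs on annuli $B(0,R^2)\setminus B(0,R)$, whose gradient contributes a factor $(\log R)^{1-n} \to 0$ (this is the $n$-parabolicity of $\R^n$). When $\sigma \equiv 0$ this forces $w$ to be constant, which is impossible since $w \equiv -M$ on the nonempty open set $\{\abs{f} < e^{-M}\}$ while $w = \log\abs{f} > -M$ wherever $\abs{f} > e^{-M}$ (nonempty once $M$ is large, as $f \not\equiv 0$); hence $0 \notin f(\R^n)$.

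The main obstacle is precisely the inhomogeneity $c\,\sigma^n$: with a genuine $L^1$ source the truncated function $w$ need not be constant (on the $n$-parabolic space $\R^n$ one can solve $-\operatorname{div}\cA(x,\nabla v) = c\,\sigma^n$ with $v$ bounded above and nonconstant), so the final step must be extracted by coupling the parabolicity / log-cutoff estimate with the global smallness $\int_{\R^n}\sigma^n < \infty$ \emph{and} with the energy estimate $\int_{\R^n}\abs{Df}^n \le \int_{\R^n}\sigma^n\abs{f}^n$ — near a zero of $f$ the latter right-hand side is small, which should feed back as a quantitative improvement and eventually a contradiction with the lower bound $m(r) > 0$ for large $r$. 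Carrying out this feedback, and simultaneously controlling the possibly large set where $\log\abs{f} = -\infty$, is where the real difficulty lies; the planar proof of Astala--Iwaniec--Martin sidesteps all of this via a power-series / Stoilow-type factorization that has no counterpart for $n \ge 3$.
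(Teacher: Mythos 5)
There is a genuine gap, and in fact two. First, your central claim that $u=\log\abs{f}$ satisfies a divergence-form inequality of $n$-Laplacian type, $\int \cA(x,\nabla u)\cdot\nabla\varphi \le c\int\sigma^n\varphi$, is asserted but not derived, and it is not a routine consequence of \eqref{eq:main_definition}. For quasiregular maps the subsolution property of $\log\abs{f}$ comes from the morphism property with respect to the $\cA$-harmonic equation attached to the Beltrami system; here $f$ solves only an \emph{inequality}, is not quasiregular, and no such equation is available --- this is exactly why the paper's proof deliberately avoids PDE theory and instead works with integration by parts and the differential-form identity $J_f\abs{f}^{-n}\vol_n = d\omega$ (Lemma \ref{lem:Onninen_Zhong_lemma_variation}), which serves as the substitute for the missing equation. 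Second, even granting the subsolution inequality, you concede that the parabolicity/log-cutoff scheme does not close in the presence of the $L^1$ source $c\,\sigma^n$, and the proposed ``feedback'' between the energy estimate and the smallness of $\abs{f}$ near a zero is left entirely heuristic. Since that is precisely the crux of the theorem, the proposal stops short of a proof.

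For comparison, the paper closes this gap by a different mechanism. It first shows $\abs{Df}\in L^n(\R^n)$ and, crucially, that $\int_{\{\abs{f}<t\}}J_f=0$ for \emph{every} $t>0$ (by composing $f$ with radial truncations and applying Lemma \ref{lem:zero_Jacobian}); integrating this in $t$ with weight $t^{-n-1}$ and using Fubini yields the exact identity $\int_{\R^n}J_f/\abs{f}^n=0$, hence $\int_{\R^n}\abs{Df}^n/\abs{f}^n\le\int_{\R^n}\sigma^n$, so $\log\abs{f}\in W^{1,n}_\loc(\R^n)$ with globally $L^n$ gradient (Lemmas \ref{lem:level_set_zero_Jacobian}--\ref{lem:log_f_is_Sobolev}). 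Then a logarithmic isoperimetric inequality, $\int_{B_r}J_f/\abs{f}^n\le C_n r\int_{\partial B_r}\abs{Df}^n/\abs{f}^n$ (Lemma \ref{lem:logisoperimetric}, proved via Stokes-type arguments and the Sobolev embedding on spheres), combined with the distortion inequality divided by $\abs{f}^n$ and with $\sigma\in L^{n+\eps}_\loc$ --- which turns $\int_{B_r}\sigma^n$ into $Cr^{n\eps/(n+\eps)}$ --- produces the differential inequality of Lemma \ref{lem:diff_ineq_solution_r} and a Morrey decay for $\int_{B_r}\abs{\nabla\log\abs{f}}^n$. This gives local H\"older continuity of $\log\abs{f}$, which forces $\log\abs{f}>-\infty$ everywhere and hence $0\notin f(\R^n)$. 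The quantitative ingredient you were missing is this Morrey iteration driven by the local higher integrability of $\sigma$, not the $n$-parabolicity of $\R^n$; if you want to salvage your outline, the sublevel-set identity and the isoperimetric estimate are the two lemmas you would need to prove.
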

	
The first and key step in the proof of Theorem~\ref{thm:nonzero} is to show that a solution to  the heterogeneous distortion inequality with $\sigma \in L^n (\R^n)$ satisfies
\begin{equation}\label{eq:naturalestimatelog}
	\int_{\R^n} \frac{J_f}{\abs{f}^n}  =0, \qquad 
	\textnormal{and therefore,} \quad
	\int_{\R^n} \frac{\abs{Df}^n}{\abs{f}^n} \le \int_{\R^n}  \sigma^n \, .
\end{equation}
The proof of this is based on a thorough analysis of the integrals of $J_f$ over the sublevel sets of $\abs{f}$. 
In turn, we obtain that $\abs{\nabla \log \abs{f}} \in L^n (\R^n)$. The second step is to establish a Morrey-type decay estimate~~\cite{Morrey-Holder} on the integrals of $\abs{\nabla \log \abs{f}}^n$ over the balls in $\R^n$. Our proof of the decay estimate relies on the formal identity
\[  \frac{J_f(x)\,}{\abs{f(x)}^n} \vol_n  = \dd \omega,\]
where $\omega$ is a certain differential $(n-1)$-form.  The resulting polynomial decay estimate implies that the function $\log \abs{f}$ is   H\"older continuous. Thus, the mapping  $f$ itself must omit the point $0$ from its range, completing the proof of Theorem \ref{thm:nonzero}. 

Our final main result is then the Liouville part of the Astala-Iwaniec-Martin question.

\begin{thm}\label{thm:Liouville}
	Suppose that $f \in W^{1,n}_\loc(\R^n, \R^n)$ satisfies the heterogeneous distortion inequality \eqref{eq:main_definition} for $K \in [1, \infty)$ and $\sigma \in L^{n-\eps}(\R^n) \cap L^{n+\eps}(\R^n)$ for some $\eps > 0$. If  $\lim_{x \to \infty} \abs{f(x)} = 0$, then $f \equiv 0$.
\end{thm}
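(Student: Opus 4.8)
The plan is to argue by contradiction. Assume $f$ satisfies the hypotheses but $f \not\equiv 0$; I will contradict $\lim_{x\to\infty}\abs{f(x)} = 0$. Since $\sigma \in L^{n-\eps}(\R^n)\cap L^{n+\eps}(\R^n)$, interpolation (log-convexity of the $L^p$-norms) gives $\sigma \in L^n(\R^n)$, and trivially $\sigma \in L^{n+\eps}_{\loc}(\R^n)$; thus Theorem~\ref{thm:Holder} applies and $f$ is continuous, hence bounded because $\abs{f(x)}\to 0$ at infinity. Now Theorem~\ref{thm:nonzero} applies and gives $0\notin f(\R^n)$, so $u := \log\abs{f}$ is a well-defined continuous function on $\R^n$ with $u(x)\to -\infty$ as $\abs{x}\to\infty$ and $\abs{\nabla u}\le\abs{Df}/\abs{f}$ a.e. Moreover, the estimates~\eqref{eq:naturalestimatelog} obtained while proving Theorem~\ref{thm:nonzero} give $\nabla u\in L^n(\R^n)$ with $\int_{\R^n}\abs{\nabla u}^n\le\int_{\R^n}\sigma^n$, together with the Morrey-type decay of $\int_B\abs{\nabla u}^n$ and the local Hölder continuity of $u$.

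The heart of the argument is to upgrade this to $\nabla u\in L^p(\R^n)$ for some $p\in(1,n)$; granting this, the theorem follows quickly. The homogeneous Sobolev inequality on $\R^n$ produces a constant $c$ with $u-c\in L^{p^*}(\R^n)$, $p^*=np/(n-p)<\infty$ --- one gets $c$ by applying the Sobolev--Poincaré inequality on the balls $B(0,R)$, noting that the averages $u_{B(0,R)}$ form a Cauchy sequence, and letting $R\to\infty$. But a continuous function tending to $-\infty$ exceeds any prescribed level in absolute value outside a ball, hence lies in no $L^q(\R^n)$ with $q<\infty$; this contradicts $u-c\in L^{p^*}(\R^n)$, so $f\equiv 0$.

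The obstacle, then, is exactly the sub-critical bound $\nabla u\in L^p(\R^n)$, $p<n$, and this is the one place where the assumption $\sigma\in L^{n-\eps}(\R^n)$ --- not present in Theorem~\ref{thm:nonzero} --- is indispensable: the weaker conclusion $\nabla u\in L^n(\R^n)$ alone does not force $u$ to be bounded below, as the profile $u(x)\sim-(\log\abs{x})^{1/2}$ near infinity shows for $n\ge 3$. My plan for the upgrade is to revisit the Caccioppoli-type estimate for $u$ coming from the identity $J_f(x)\abs{f(x)}^{-n}\vol_n=\dd\omega$, where $\abs{\omega}$ is pointwise dominated by a multiple of $(\abs{Df}/\abs{f})^{n-1}$, feed in the now sub-critical integrability of $\sigma$, and run a hole-filling/Morrey iteration to convert the global energy bound into a quantitative spatial decay of the tails $\int_{\R^n\setminus B(0,R)}\abs{\nabla u}^n$ (equivalently, a reverse-Hölder self-improvement on the sub-critical side); Hölder's inequality over dyadic shells then yields $\nabla u\in L^{n-\eps'}(\R^n)$ for a suitable $\eps'\in(0,\eps)$. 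The added difficulty relative to the planar case of Astala--Iwaniec--Martin is that for $n\ge 3$ there is no underlying linear elliptic system, no power-series expansion, and no usable degree theory, so this bootstrap must be carried out by hand, controlling the non-signed nonlinear term $J_f/\abs{f}^n$ purely through the exact-form structure.
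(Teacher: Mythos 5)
Your scaffolding matches the paper's: continuity via Theorem~\ref{thm:Holder}, absence of zeros via Theorem~\ref{thm:nonzero}, passage to $u=\log\abs{f}$, and the observation that sub-critical integrability $\nabla u\in L^{p}(\R^n)$ for some $p<n$ contradicts $u(x)\to-\infty$ (your Sobolev-embedding finish is essentially equivalent to the paper's dyadic-average oscillation bound, and both are fine). You also correctly identify that everything hinges on upgrading $\abs{Df}/\abs{f}\in L^n(\R^n)$ to an exponent below $n$, and that this is where $\sigma\in L^{n-\eps}(\R^n)$ must enter. But that upgrade is precisely the hard content of the theorem, and your proposal does not prove it: it is only announced as a plan (``hole-filling/Morrey iteration \dots reverse-H\"older self-improvement on the sub-critical side''), and the one concrete mechanism you describe does not close. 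If you run an exterior Caccioppoli/hole-filling estimate to control the tail energy $T(R)=\int_{\R^n\setminus B_R}\abs{\nabla u}^n$, you get an inequality of the form $T(2R)\le\theta\,T(R)+\int_{\R^n\setminus B_R}\sigma^n$ with $\theta<1$; but the hypotheses $\sigma\in L^{n-\eps}\cap L^{n+\eps}$ give no quantitative (e.g.\ polynomial) rate of decay for $\int_{\R^n\setminus B_R}\sigma^n$, hence none for $T(R)$, and then H\"older on dyadic shells, $\int_{A_j}\abs{\nabla u}^{n-\eps'}\le T(2^j)^{(n-\eps')/n}\,\abs{A_j}^{\eps'/n}$, produces a series that need not converge because $\abs{A_j}^{\eps'/n}\sim 2^{jn\eps'/n}$ grows geometrically. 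In short, ``global $n$-energy plus unquantified tail smallness'' is not enough; one needs a genuine below-natural-exponent estimate, which is a theorem in its own right (Iwaniec--Martin, Iwaniec, Faraco--Zhong), not a routine iteration.

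The paper's route, which you would need to supply, is different in its key technical device. It forms the manifold-valued logarithm $h=(\log\abs{f},\,f/\abs{f})\colon\R^n\to\R\times\S^{n-1}$, so that $J_h=J_f/\abs{f}^n$ is the Jacobian of an honest Sobolev map and $h$ satisfies the homogeneous-looking inequality $\abs{Dh}^n\le KJ_h+\sigma^n$ (Lemma~\ref{lem:mfld_valued_log_props}); your pointwise identity $J_f\abs{f}^{-n}\vol_n=\dd\omega$ is the same structural input, but by itself it only reproduces the critical-exponent estimates you already have. The sub-critical bound (Lemma~\ref{lem:lower_integrability}) is then obtained by a global Lipschitz-truncation argument in the spirit of Faraco--Zhong: on the good set $F_\lambda=\{M(\abs{Dh}+\sigma)\le\lambda\}$ the $\R$-coordinate $h_\R$ is $C_n\lambda$-Lipschitz, one extends it by McShane to $h_{\R,\lambda}$, applies the total-Jacobian cancellation of Lemma~\ref{lem:zero_Jacobian} to the truncated map $(h_{\R,\lambda},h_{\S^{n-1}})$, multiplies by $\lambda^{-1-\eps'}$, integrates in $\lambda$, and absorbs the bad-set term for $\eps'$ small using the maximal function theorem. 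None of this machinery (maximal-function good/bad set decomposition, Lipschitz extension compatible with the product structure $\R\times\S^{n-1}$, integration in the truncation parameter, absorption with constants degenerating as $\eps'\to0$) appears in your proposal, and without it the step ``$\nabla u\in L^{n-\eps'}(\R^n)$'' remains unproved; as written, your argument has a genuine gap at exactly the point you flagged as the heart of the matter.
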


 We prove Theorem~\ref{thm:Liouville} by showing that, if there exists a non-identically zero solution $f \colon \R^n \to \R^n$ to~\eqref{eq:main_definition} with $\sigma \in L^{n+\eps}(\R^n) \cap L^{n-\eps}(\R^n)$, then the oscillation of the function $\log \abs{f}$ over the entire space $\R^n$ is uniformly bounded. This clearly leads to a contradiction  with the assumption $f(x) \to 0$ when $\abs{x} \to \infty$. The crucial step in obtaining the uniform oscillation bound is to strengthen the estimate~\eqref{eq:naturalestimatelog}; that is,  to prove an integrability estimate below the natural exponent $n$ for the expression $\abs{Df}/\abs{f}$. 
 
Our solution is influenced by the case $n = 2$. Indeed, in this case, the mapping $f$ has no zeros by Theorem \ref{thm:nonzero}, and hence $f$ has a well-defined complex logarithm $\log f$. The mapping $\log f$ satisfies the distortion estimate
\begin{equation}\label{eq:logestimate}
	\abs{D \log f}^2 \leq K J_{\log f} + \sigma^2
\end{equation}
almost everywhere. This, in turn, gives a nonhomogeneous linear
elliptic equation for $\log f$, which implies the desired integrability
estimate below the natural exponent for $\abs{D \log f}$. In higher dimensions,
the issue is  to construct a similar map $\log f \colon \R^n \to \R^n$.  The Zorich map $h_Z \colon \R^n \to \R^n \setminus \{0\}$ provides a well-known $n$-dimensional generalization of the planar exponential mapping; see~\cite{Zorich}. Unfortunately, $h_Z$ has a branch set consisting of  $\left(n-2\right)$-dimensional hyperplanes, which prevents lifting an arbitrary continuous $f \colon \R^n \to \R^n \setminus \{0\}$ through $h_Z$.

We circumvent the lifting difficulties by moving to the Riemannian manifold setting. Indeed, a well defined counterpart for the logarithm exists from $\R^n \setminus \{0\}$ to $\R \times \S^{n-1}$, providing us with a mapping $``\log f\text{''} \colon \R^n \to \R \times \S^{n-1}$. This mapping satisfies a higher dimensional counterpart of the estimate~\eqref{eq:logestimate}, and the single Euclidean component $\R$ in the target space is sufficient for a Caccioppoli-type inequality to hold, which leads to the desired integrability estimate below the natural exponent $n$.
	
\section*{Acknowledgments}
We thank Tadeusz Iwaniec and Xiao Zhong for discussions and shared insights.

\section{Preliminaries}

In this section, we go over some of the tools we require which might be less familiar to readers.

\subsection{Sobolev mappings with manifold target}

For the most part of this text we use the standard Sobolev spaces $W^{1,p}(\Omega, \R^k)$ and $W^{1,p}_\loc(\Omega, \R^k)$, where $\Omega \subset \R^n$ is an open connected set, see e.g.~\cite{Fonseca-Gangbo-book, Ziemer-book}. However, towards the end of the text, we consider a locally Sobolev mapping $f \colon \R^n \to M$, where $M$ is an $n$-dimensional Riemannian manifold.

There are various approaches to defining first order Sobolev mappings with a manifold target; see e.g.\ \cite{Hajlasz-Iwaniec-Maly-Onninen} or \cite{Convent-VanSchaftingen_Sobolev}. However, in our case, we only have to consider continuous Sobolev mappings with a manifold target, which simplifies the definition significantly.

\begin{defn}
	Let $\Omega \subset \R^n$ be a domain, and let $M$ be a Riemannian $k$-manifold. We say that a continuous $f \colon \Omega \to M$ is in the Sobolev space $W^{1,p}_\loc(\Omega, M)$ for $p \in [1, \infty)$ if, for every $x \in \Omega$, there exists a neighborhood $U \subset \Omega$ of $x$ and a smooth bilipschitz chart $\varphi \colon V \to M$ such that $f U \subset \varphi V$ and $\varphi^{-1} \circ f \in W^{1,n}(U, \R^k)$.
\end{defn}

If a continuous function $f \colon \Omega \to M$ is in $W^{1,p}_\loc(\Omega, M)$, then there exists a weak derivative $Df \colon \Omega \times \R^n \to TM$ which satisfies $D(\varphi^{-1} \circ f) = D(\varphi^{-1}) \circ Df$ for bilipschitz charts $\varphi \colon V \to M$. This weak derivative is unique up to a set of measure zero, in the sense that if $\tilde{D}f$ is another such mapping, then $\tilde{D}f = Df$ outside a set of the form $E \times \R^n$ where the set $E$ has zero $n$-dimensional Lebegue measure, $m_n(E) = 0$. 

At a given point $x \in \Omega$, we denote by $\abs{Df(x)}$ the operator norm of $Df(x) \colon \R^n \to T_{f(x)} M$, where $T_{f(x)} M$ is equipped with the norm induced by the Riemannian metric. It follows that $\abs{Df} \colon \Omega \to [0, \infty]$ is in $L^p_\loc(\Omega)$ for any continuous $f \in W^{1,p}_\loc(\Omega, M)$. If $\dim \Omega = \dim M$ and $M$ is oriented, then we also have a measurable Jacobian $J_f \colon \Omega \to \R$, characterized almost everywhere by $f^* \vol_M = J_f \vol_n$.

We remark that if $M = \R^k$, then the above definition coincides with the usual definition of $f \in W^{1,p}_\loc(\Omega, \R^k)$ for continuous $f$. We also remark that if the target is a product manifold $M = M_1 \times M_2$, then given two continuous mappings $f_1 \colon \Omega \to M_1$ and $f_2 \colon \Omega \to M_2$, we have that $(f_1, f_2) \in W^{1,p}_\loc(\Omega, M)$ if and only if $f_1 \in W^{1,p}_\loc(\Omega, M_1)$ and $f_2 \in W^{1,p}_\loc(\Omega, M_2)$.

Next we  recall Sobolev differential forms. Namely,  suppose that $M$ is a Riemannian manifold. A measurable $k+1$-form $d\omega \in L^1_\loc(\wedge^{k+1} M)$ is the \emph{weak differential} of a measurable $k$-form $\omega \in L^1_\loc(\wedge^{k} M)$ if
\[
	\int_M \omega \wedge d\eta = (-1)^{k+1} \int_M d\omega \wedge \eta
\]
for every $\eta \in C^\infty_0(\wedge^{n-k-1} M)$. We denote by $W^{d, p, q}_\loc(\wedge^k M)$ the space of $k$-forms $\omega \in L^p_\loc(\wedge^k M)$ with a weak differential $d\omega \in L^q_\loc(\wedge^{k+1} M)$. The version where $\omega \in L^p(\wedge^k M)$ and $d\omega \in L^q(\wedge^{k+1} M)$ is denoted $W^{d, p, q}(\wedge^k M)$. We also use the shorthands $W^{d, p}(\wedge^k M) = W^{d, p, p}(\wedge^k M)$ and $W^{d, p}_\loc(\wedge^k M) = W^{d, p, p}_\loc(\wedge^k M)$.

In particular, we require the following standard result about pull-backs of compactly supported smooth forms with Sobolev mappings. We sketch the proof for the convenience of the reader.

\begin{lemma}\label{lem:diff_form_Sobolev_pullback}
	Let $\Omega \subset \R^n$ be a domain, and let $M$ be a Riemannian $m$-manifold. Suppose that $f \in W^{1,p}_\loc(\Omega, M)$, where we assume that $f$ is continuous if $M \neq \R^{m}$. If $\omega \in C^\infty_0(\wedge^k M)$ and $p \geq k + 1$, then $f^* \omega \in W^{d, p/k, p/(k+1)}_\loc(\wedge^k \Omega)$ and $d f^* \omega = f^* d \omega$.
\end{lemma}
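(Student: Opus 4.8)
The plan is to reduce everything to the Euclidean case by working in bilipschitz charts, and then to prove the Euclidean statement by a standard smoothing argument. First I would observe that both conclusions are local: whether $f^*\omega$ lies in $W^{d,p/k,p/(k+1)}_\loc(\wedge^k\Omega)$ and whether $df^*\omega = f^*d\omega$ can be checked on a neighborhood of each point of $\Omega$. Fix $x_0 \in \Omega$. By the definition of $W^{1,p}_\loc(\Omega,M)$ (or trivially if $M = \R^m$), we may pick a neighborhood $U$ of $x_0$ and a smooth bilipschitz chart $\varphi\colon V \to M$ with $fU \subset \varphi V$, so that $g := \varphi^{-1}\circ f \in W^{1,p}(U,\R^m)$. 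Since $\varphi$ is a diffeomorphism onto its image, $f^*\omega = g^*(\varphi^*\omega)$ on $U$, and $\varphi^*\omega \in C^\infty_0(\wedge^k V)$ after possibly shrinking (using a cutoff, or noting $\varphi^*\omega$ has compact support in $V$ since $\omega$ does). Likewise $f^* d\omega = g^*(\varphi^* d\omega) = g^*(d\varphi^*\omega)$. Thus it suffices to treat the case $M = \R^m$ with $\omega \in C^\infty_0(\wedge^k \R^m)$ and $g \in W^{1,p}(U,\R^m)$.

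For the Euclidean case, write $\omega = \sum_I a_I \, dy^{i_1}\wedge\cdots\wedge dy^{i_k}$ with $a_I \in C^\infty_0(\R^m)$. Then $g^*\omega = \sum_I (a_I\circ g)\, dg^{i_1}\wedge\cdots\wedge dg^{i_k}$, whose coefficients are sums of products of $a_I\circ g \in L^\infty$ with $k$-fold products of first-order partials of $g$; by Hölder these lie in $L^{p/k}_\loc(U)$, and a similar computation gives $g^* d\omega \in L^{p/(k+1)}_\loc(U)$, using $p \geq k+1$. To identify the weak differential, I would approximate: mollify $g$ to get $g_\delta \in C^\infty(U',\R^m)$ with $g_\delta \to g$ in $W^{1,p}(U')$ on a slightly smaller domain $U' \Subset U$. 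For smooth maps the classical identity $d(g_\delta^*\omega) = g_\delta^* d\omega$ holds pointwise. Then I pass to the limit in the weak formulation: for $\eta \in C^\infty_0(\wedge^{n-k-1}U')$,
\[
	\int_{U'} g_\delta^*\omega \wedge d\eta = (-1)^{k+1}\int_{U'} g_\delta^* d\omega \wedge \eta,
\]
and both sides converge to the corresponding integrals with $g$ in place of $g_\delta$, because $g_\delta^*\omega \to g^*\omega$ in $L^{p/k}_\loc$ and $g_\delta^* d\omega \to g^* d\omega$ in $L^{p/(k+1)}_\loc$. This last convergence is the technical heart: it follows from $a_I\circ g_\delta \to a_I\circ g$ in every $L^q_\loc$ (by uniform continuity of $a_I$ and continuity of composition, using $g_\delta \to g$ in measure plus the uniform bound $\|a_I\|_\infty$) together with $Dg_\delta \to Dg$ in $L^p$, combined via a telescoping-product estimate and generalized Hölder. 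Hence $g^*\omega \in W^{d,p/k,p/(k+1)}_\loc$ with $d g^*\omega = g^* d\omega$, and transporting back through $\varphi$ finishes the proof.

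The main obstacle is the convergence $g_\delta^*\omega \to g^*\omega$ and $g_\delta^* d\omega \to g^* d\omega$ in the stated local $L^q$ spaces, i.e.\ the stability of pullback under Sobolev approximation. The products of derivatives are exactly at the borderline integrability dictated by the hypothesis $p \geq k+1$, so one must be careful to use generalized Hölder with the exponents $p/k$ (respectively $p/(k+1)$) and to handle the composition factor $a_I \circ g$ separately from the derivative factors; the compact support of $a_I$ is what makes $a_I\circ g$ bounded and the integrals finite regardless of the behavior of $g$ itself. Everything else — the localization, the reduction via the chart $\varphi$, and the classical smooth identity $d(g_\delta^*\omega) = g_\delta^* d\omega$ — is routine.
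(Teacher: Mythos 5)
Your argument is correct, but it runs along a different track than the paper's. You localize, pass to the chart representative $g=\varphi^{-1}\circ f\in W^{1,p}(U,\R^m)$, and prove the commutation $dg^*\omega=g^*d\omega$ directly by mollifying $g$, using the classical identity for the smooth approximations $g_\delta$ and passing to the limit in the weak formulation via the stability of pullback under $W^{1,p}$-convergence (the telescoping/H\"older estimate you single out, with $a_I\circ g$ handled by boundedness and convergence in measure). The paper instead never mollifies the map: it decomposes $\omega$ (after localizing its support into a chart) as a finite sum of forms $\omega_0\, d\phi_1'\wedge\dots\wedge d\phi_k'$ with $\omega_0,\phi_i'$ globally defined smooth compactly supported functions on $M$, notes via the chain rule for Sobolev--$C^1$ compositions that each factor $f^*d\phi_i'=d(\phi_i'\circ f)$ is an exact Sobolev form, and then concludes from the wedge/Leibniz rule for weak differentials of Sobolev forms together with $d\circ d=0$. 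The paper's route is shorter because it outsources the analytic work to those known calculus rules for Sobolev forms; your route is more self-contained, proving the commutation from scratch, at the cost of carrying out the approximation argument yourself. One small imprecision worth fixing: $\varphi^*\omega$ need not have compact support in $V$ merely because $\omega\in C^\infty_0(\wedge^kM)$ (its support intersected with $\varphi V$ need not be compact in $\varphi V$); your parenthetical cutoff is the right fix, and it works because continuity of $f$ lets you shrink $U$ so that $g(\overline{U'})$ is a compact subset of $V$, after which truncating $\varphi^*\omega$ off a neighborhood of that set changes nothing in $g^*(\varphi^*\omega)$ on $U'$.
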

\begin{proof}[Sketch of proof]
	The fact that $f^*\omega$ and $f^*d\omega$ satisfy the correct integrabilities follows from the estimates
	\begin{align*}
		\abs{(f^* \omega)_x} &\leq \norm{\omega}_\infty \abs{Df(x)}^k,&
		\abs{(f^* d\omega)_x} &\leq \norm{d\omega}_\infty \abs{Df(x)}^{k+1}
	\end{align*}
	for a.e.\ $x \in \Omega$. 
	
	For $d f^* \omega = f^* d \omega$, it suffices to consider $\omega$ for which the support $\spt \omega$ of $\omega$ is contained in the domain of a bilipschitz chart $\phi \colon U \to \R^m$, as the general $\omega$ is a finite sum of such forms. Moreover, it suffices to consider $\omega$ of the form $\omega_0 d\phi_{1} \wedge \dots \wedge d\phi_k$, as a general $\omega$ with $\spt \omega \subset U$ is again a finite sum of such forms with the coordinates of $\phi$ rearranged. We may also select $\phi' \in C^\infty_0(M, \R^m)$ such that $\phi' = \phi$ on $\spt \omega$, which lets us write $\omega = \omega_0 d\phi_{1}' \wedge \dots \wedge d\phi_k'$ in a form where the components are defined on all of $N$.
	
	We then use the chain rule of locally Sobolev and $C^1$ maps to conclude that$f^* \omega_0 = \omega_0 \circ f \in L^p_\loc(\wedge^0 \Omega)$, $f^* d\omega_0 = d (\omega_0 \circ f) \in L^p_\loc(\wedge^1 \Omega)$, and $f^* d\phi_i' = d (\phi_i' \circ f) \in L^p_\loc(\wedge^1 \Omega)$ for every $i \in \{1, \dots, k\}$. By the wedge product rules for Sobolev forms and the formula $d \circ d = 0$ for the weak differential, we conclude that $f^* d\omega = f^* d\omega_0 \wedge f^*d\phi_1' \wedge \dots \wedge f^* d \phi_k' = d(f^*\omega_0) \wedge d(\phi_1' \circ f) \wedge \dots \wedge d (\phi_k' \circ f) = d f^* \omega$.
\end{proof}

\subsection{Caccioppoli inequality}

The Caccioppoli inequalities are  a standard tool in the study of quasiregular mappings.  The most basic form of the Caccioppoli estimate for a  $K$-quasiregular mapping  $f \colon \Omega \to \R^n$ reads as
\[\int_\Omega \eta^n \abs{Df}^n \le n^n K^n  \int_\Omega \abs{f}^n\ \abs{\nabla \eta}^n \, , \]
where is a real-valued smooth test function with compact support in $\Omega$. This follows from the general inequality
\[\int_\Omega \eta^n J_f \le n \int_\Omega \abs{Df}^{n-1} \abs{\eta}^{n-1} \abs{\nabla \eta} \abs{f}\]
which can be proved for arbitrary $f\in W_{\loc}^{1,n}(\Omega, \R^n)$ via an integration-by-parts argument. Our arguments, however, require a version with a target space other than $\R^n$. 
In general, it is not possible to obtain a Caccioppoli-type estimate for mappings $f \colon \R^n \to M$ where $M$ is a Riemannian $n$-manifold.  This happens when $M$ is a \emph{rational homology sphere}, see~\cite{Hajlasz-Iwaniec-Maly-Onninen}. However, the standard proof generalizes to  the case of $M = \R \times N$, where $N$ is a Riemannian $(n-1)$-manifold.

\begin{lemma}\label{lem:caccioppoli}
	Let $\Omega \subset \R^n$ be a domain, and let $N$ be a compact oriented Riemannian $(n-1)$-manifold without boundary. Let $f \in W^{1,n}_\loc(\Omega, \R \times N)$, where we assume that $f$ is continuous if $N \neq \R^{n-1}$. Denote by $f_\R \colon \Omega \to \R$ and $f_N \colon \Omega \to N$ the coordinate functions of $f$. Then for every $\eta \in C^\infty_0(\Omega)$ and every $c \in \R$, we have
	\[
		\abs{\int_{\Omega} \eta^n J_f} \leq n \int_{\R^n} \abs{Df}^{n-1} \abs{\eta}^{n-1} \abs{\nabla \eta} \abs{f_{\R} - c}.
	\]
\end{lemma}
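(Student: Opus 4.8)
The plan is to imitate the classical integration-by-parts argument for the Caccioppoli inequality of quasiregular mappings, working in a single bilipschitz chart of $N$ at a time and exploiting the fact that the target $\R \times N$ carries a global $\R$-coordinate $f_{\R}$, which plays the role that one individual component $f_i$ plays in the Euclidean proof. The key point is that the pullback $f^*\vol_{\R\times N} = J_f\,\vol_n$ can be written, up to sign, as $df_{\R} \wedge f^*(\vol_N)$, and that $\vol_N$ is an exact form after subtracting nothing — rather, since $N$ is a compact oriented $(n-1)$-manifold, $\vol_N$ is \emph{not} exact, but what we need instead is that $d(\eta^n (f_{\R}-c) \, f^*\alpha)$ produces the desired terms for a suitable primitive; the honest route is to avoid needing $\vol_N$ exact and instead integrate by parts directly on the form $\eta^n (f_{\R}-c)\, f^*(\vol_N)$.

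\textbf{Step 1: Reduce to smooth $f$ or justify the integration by parts for Sobolev $f$.} First I would note that $f \in W^{1,n}_\loc(\Omega, \R\times N)$ with $f$ continuous, so by Lemma~\ref{lem:diff_form_Sobolev_pullback} applied to $\omega = \vol_N \in C^\infty_0(\wedge^{n-1} N)$ (after cutting off; $\vol_N$ is already smooth with $N$ compact so no cutoff is needed on the $N$-factor, only locality in $\Omega$ via $\eta$), we have $f_N^*\vol_N \in W^{d, n/(n-1), 1}_\loc(\wedge^{n-1}\Omega)$ with $d(f_N^*\vol_N) = f_N^* d\vol_N = 0$. Here I am using that $\vol_N$ is a top form on $N$, hence closed. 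Also $f_{\R} \in W^{1,n}_\loc(\Omega)$, so $\eta^n(f_{\R}-c) \in W^{1,n}_\loc(\Omega)$ with compact support, and the $(n-1)$-form $\beta := f_N^*\vol_N$ pulls back correctly.

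\textbf{Step 2: The integration-by-parts identity.} Observe that $J_f \vol_n = f^*\vol_{\R\times N} = d f_{\R}\wedge \beta$ almost everywhere, since $\vol_{\R\times N} = dt\wedge \vol_N$ in the obvious splitting of coordinates. Hence
\[
  \int_\Omega \eta^n J_f \,\vol_n = \int_\Omega \eta^n\, df_{\R}\wedge \beta = \int_\Omega \eta^n\, d(f_{\R}-c)\wedge\beta .
\]
Now I would use the weak Leibniz rule for Sobolev forms: since $d\beta = 0$,
\[
  d\big(\eta^n (f_{\R}-c)\,\beta\big) = \eta^n\, d(f_{\R}-c)\wedge\beta + n\eta^{n-1}(f_{\R}-c)\, d\eta \wedge\beta .
\]
The left-hand side integrates to zero over $\Omega$ because $\eta^n(f_{\R}-c)\beta$ has compact support and its weak differential is integrable (the product of an $L^n$ function and an $L^{n/(n-1)}$ form gives the right integrability, and multiplying by $\eta$ keeps compact support — this needs a short approximation argument, mollifying $\eta^n(f_{\R}-c)$, to reduce to the case where Stokes' theorem is classical). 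Therefore
\[
  \int_\Omega \eta^n J_f = -n\int_\Omega \eta^{n-1}(f_{\R}-c)\, d\eta\wedge\beta .
\]

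\textbf{Step 3: Pointwise estimation of the integrand.} It remains to bound $\abs{d\eta\wedge\beta}$ pointwise. Since $\beta = f_N^*\vol_N$, we have $\abs{\beta_x}\le \abs{Df_N(x)}^{n-1}\le \abs{Df(x)}^{n-1}$ for a.e.\ $x$ (this is the pointwise estimate from the sketch of Lemma~\ref{lem:diff_form_Sobolev_pullback}), and $\abs{d\eta} = \abs{\nabla\eta}$. A wedge of a $1$-form with an $(n-1)$-form in $\R^n$ satisfies $\abs{d\eta\wedge\beta}\le \abs{\nabla\eta}\,\abs{\beta}$ pointwise. Combining,
\[
  \Big|\int_\Omega \eta^n J_f\Big| \le n\int_\Omega \abs{\eta}^{n-1}\abs{f_{\R}-c}\,\abs{\nabla\eta}\,\abs{Df}^{n-1},
\]
which is exactly the claimed inequality (extending the domain of integration from $\Omega$ to $\R^n$ is harmless since the integrand is supported in $\spt\eta\subset\Omega$).

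\textbf{Main obstacle.} The only genuinely delicate point is the justification that $\int_\Omega d(\eta^n(f_{\R}-c)\beta) = 0$, i.e.\ Stokes' theorem for the Sobolev $(n-1)$-form $\eta^n(f_{\R}-c)\beta$. One must check it has an $L^1_\loc$ weak differential (which the Leibniz computation in Step 2 provides, once one verifies the Leibniz rule applies: $\eta^n(f_{\R}-c)\in W^{1,n}_\loc$ and $\beta\in W^{d,n/(n-1),1}_\loc$, with the products $d(f_{\R}-c)\wedge\beta$ and $(f_{\R}-c)\,d\eta\wedge\beta$ both in $L^1_\loc$ by Hölder), and then approximate: mollify the scalar factor $u_\delta \to \eta^n(f_{\R}-c)$ in $W^{1,n}$ with supports in a fixed compact set, apply classical Stokes to $u_\delta\,\beta_\epsilon$ with $\beta$ also smoothed, and pass to the limit using the integrability bounds. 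This is routine but is the step that carries the technical weight; everything else is the standard Caccioppoli bookkeeping adapted to the manifold factor.
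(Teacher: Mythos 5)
Your proposal is correct and follows essentially the same route as the paper: both write $\eta^n J_f\,\vol_n = \eta^n\,d(f_\R-c)\wedge f_N^*\vol_N$, use Lemma~\ref{lem:diff_form_Sobolev_pullback} to get that $f_N^*\vol_N$ is a closed Sobolev $(n-1)$-form, integrate by parts against the compactly supported factor $\eta^n(f_\R-c)$ with the vanishing of the total integral justified by smooth approximation of both the scalar factor and the form, and finish with the pointwise wedge estimate $\abs{Df_N}^{n-1}\le\abs{Df}^{n-1}$. The approximation step you flag as the main obstacle is exactly how the paper handles it (approximating $f_N^*\vol_N$ in $W^{d,n/(n-1)}$ via Iwaniec--Scott--Stroffolini), so no gap remains.
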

\begin{proof}
	We define the function $f_\eta = (\eta^n (f_\R-c), f_N)$. Then $f_\eta \in W^{1,n}_\loc(\Omega, \R \times N)$. Moreover, since $\vol_{\R \times N} = (\pi_\R^* \vol_1)\wedge(\pi_N^* \vol_N)$, we have
	\[
		f_\eta^* \vol_{\R \times N}
		= d(\eta^n (f_\R - c)) \wedge f_N^* \vol_N.
	\]
	
	We may select a subdomain $U$ with smooth boundary such that $U$ is compactly contained in $\Omega$ and $\spt \eta \subset U$. Since $\eta^n (f_\R - c) \in W^{1,n}(U)$ with compact support, we may approximate it in $W^{1,n}(U)$ with $g_i \in C^\infty_0(U)$. Moreover, we have by Lemma \ref{lem:diff_form_Sobolev_pullback} that $f_N^* \vol_N \in W^{d, n/(n-1), 1}_\loc(\wedge^{n-1} \Omega)$ and $d f_N^* \vol_N = f_N^* d\vol_N = 0$. Hence, $f_N^* \vol_N \in W^{d, n/(n-1)}(\wedge^{n-1} U)$, and we may approximate $f_N^* \vol_N$ in $W^{d, n/(n-1)}(\wedge^{n-1} U)$ with $\omega_i \in C^\infty(\wedge^{n-1} U)$ (see e.g. \cite[Corollary 3.6]{Iwaniec-Scott-Stroffolini}).
	
	By a standard H\"older-type estimate and the Leibniz rule, it therefore follows that $d(g_i \omega_i) \to d(\eta^n (f_\R - c)) \wedge f_N^* \vol_N$ in $L^1(\wedge^n U)$. However, since $g_i \omega_i$ is smooth and compactly supported for every $i$, it follows that
	\[
		\int_{\Omega} d(\eta^n (f_\R - c)) \wedge f_N^* \vol_N
		= \lim_{i \to \infty} \int_{U} d(g_i \omega_i)
		= \lim_{i \to \infty} 0
		= 0.
	\]
	Hence, we may estimate that
	\begin{multline*}
		\abs{\int_{\Omega} \eta^n J_f}
		= \abs{\int_{\Omega} \eta^n d(f_\R - c) \wedge f_N^* \vol_N}
		= \abs{\int_{\Omega} (f_\R - c) d(\eta^n) \wedge f_N^* \vol_N}\\
		\leq \int_{\R^n} \abs{f_{\R} - c} (n\abs{\eta}^{n-1} \abs{\nabla \eta}) \abs{Df}^{n-1}.
	\end{multline*}
	The claim therefore follows.
\end{proof}

\subsection{Jacobians of entire mappings}

To end the preliminaries section, we also discuss a result regarding the Jacobian of an entire Sobolev mapping. It is the main reason why we stated the Caccioppoli inequality also in the case where the target is the product of $\R$ and an $(n-1)$-manifold.

\begin{lemma}\label{lem:zero_Jacobian}
	Let $N$ be a compact oriented Riemannian $(n-1)$-manifold without boundary. Suppose that $f \in W^{1,n}_\loc(\R^n, \R \times N)$, where we assume that $f$ is continuous if $N \neq \R^{n-1}$. If $\abs{Df} \in L^n(\R^n)$, then
	\[
		\int_{\R^n} J_f = 0.
	\]
\end{lemma}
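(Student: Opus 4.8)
The plan is to deduce the statement from the Caccioppoli inequality of Lemma \ref{lem:caccioppoli} by choosing a suitable sequence of cutoff functions $\eta$ that increase to $1$ on all of $\R^n$ while having small gradient energy, and then letting the additive constant $c$ in the target $\R$-component be chosen appropriately so that the right-hand side vanishes in the limit. Concretely, fix a radial test function: for $R > 0$ let $\eta_R \in C^\infty_0(\R^n)$ satisfy $0 \leq \eta_R \leq 1$, $\eta_R \equiv 1$ on $B(0,R)$, $\spt \eta_R \subset B(0, 2R)$, and $\abs{\nabla \eta_R} \leq C/R$ for a dimensional constant $C$. Lemma \ref{lem:caccioppoli} then gives, for every $c \in \R$,
\[
	\abs{\int_{\R^n} \eta_R^n J_f} \leq \frac{nC}{R} \int_{B(0,2R) \setminus B(0,R)} \abs{Df}^{n-1} \abs{f_\R - c}.
\]

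First I would control the right-hand side. The key observation is that the hypothesis $\abs{Df} \in L^n(\R^n)$ forces $\abs{Df}^{n-1} \in L^{n/(n-1)}(\R^n)$, so by H\"older's inequality with exponents $n$ and $n/(n-1)$ the right-hand integral is bounded by
\[
	\frac{nC}{R} \left( \int_{B(0,2R)\setminus B(0,R)} \abs{Df}^n \right)^{(n-1)/n} \left( \int_{B(0,2R)\setminus B(0,R)} \abs{f_\R - c}^n \right)^{1/n}.
\]
The factor $\bigl(\int_{B(0,2R)\setminus B(0,R)} \abs{Df}^n\bigr)^{(n-1)/n}$ tends to $0$ as $R \to \infty$ by dominated convergence, since $\abs{Df}^n$ is globally integrable and the annuli $B(0,2R)\setminus B(0,R)$ escape to infinity. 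It remains to choose $c = c_R$ so that $R^{-1}\bigl(\int_{B(0,2R)\setminus B(0,R)} \abs{f_\R - c_R}^n\bigr)^{1/n}$ stays bounded (it need not go to zero). The natural choice is the average $c_R = \fint_{B(0,2R)\setminus B(0,R)} f_\R$; then the Poincar\'e inequality on the annulus $B(0,2R)\setminus B(0,R)$, whose Poincar\'e constant scales like $R$, gives
\[
	\left( \int_{B(0,2R)\setminus B(0,R)} \abs{f_\R - c_R}^n \right)^{1/n} \leq C' R \left( \int_{B(0,2R)\setminus B(0,R)} \abs{\nabla f_\R}^n \right)^{1/n} \leq C' R \left( \int_{B(0,2R)\setminus B(0,R)} \abs{Df}^n \right)^{1/n},
\]
using $\abs{\nabla f_\R} \leq \abs{Df}$. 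Substituting, the whole right-hand side is bounded by a constant times $\int_{B(0,2R)\setminus B(0,R)} \abs{Df}^n$, which tends to $0$ as $R \to \infty$.

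Finally I would pass to the limit on the left-hand side: since $0 \leq \eta_R \leq 1$, $\eta_R \to 1$ pointwise, and $J_f \in L^1(\R^n)$ (because $\abs{J_f} \leq \abs{Df}^n \in L^1$), dominated convergence gives $\int_{\R^n} \eta_R^n J_f \to \int_{\R^n} J_f$. Combining this with the estimate above yields $\int_{\R^n} J_f = 0$. The step I expect to require the most care is the choice of the constant $c_R$ together with the Poincar\'e inequality on annuli with the correct $R$-scaling of the constant — this is what makes the extra factor of $R^{-1}$ from $\abs{\nabla \eta_R}$ cancel against the growth of $f_\R$, leaving only the vanishing tail energy of $Df$. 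One should double-check that the Poincar\'e constant for the annulus $B(0,2R)\setminus B(0,R)$ indeed scales linearly in $R$ (which follows by rescaling from the fixed annulus $B(0,2)\setminus B(0,1)$, a bounded connected Lipschitz domain), and that all integrals in sight are finite so that the manipulations are legitimate.
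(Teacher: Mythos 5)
Your proof is correct and follows essentially the same route as the paper: the Caccioppoli inequality of Lemma \ref{lem:caccioppoli} with radial cutoffs at scale $R$, H\"older's inequality, a Poincar\'e inequality whose constant scales like $R$, and the vanishing of $\int_{\R^n\setminus B(0,R)}\abs{Df}^n$. The only (harmless) difference is that you center $c$ at the annulus average and apply Poincar\'e on the annulus, whereas the paper takes the average over the ball $B(0,2r)$ and uses the Sobolev--Poincar\'e inequality there, so in the paper's version the second factor stays bounded while the first tends to zero.
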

\begin{proof}
	We let $\eta_r \in C^\infty(\R^n,[0,1])$ be such that we have $\eta\vert_{B^n(0, r)} \equiv 1$, $\eta \vert_{\R^n \setminus B^n(0, 2r)} \equiv 0$, and $\abs{\nabla \eta} \leq 2/r$. We again denote $f = (f_{\R}, f_{N})$. We then use the Caccioppoli inequality of Lemma \ref{lem:caccioppoli} and H\"older's inequality to obtain
	\begin{align*}
		&\abs{\int_{\R^n} \eta_r^n J_f}\\
		&\qquad\leq \int_{\R^n} \eta_r^{n-1}\abs{Df}^{n-1}
			\abs{f_{\R} - (f_{\R})_{B^n(0, 2r)}} \abs{\nabla \eta_r}\\
		&\qquad\leq \left(\int_{\spt \abs{\nabla \eta_r}} 
			\eta_r^n\abs{Df}^n \right)^\frac{n-1}{n}
			\left( \int_{B^n(0, 2r)} \abs{f_{\R} - (f_{\R})_{B^n(0, 2r)}}^n 
			\abs{\nabla \eta_r}^n \right)^\frac{1}{n}
	\end{align*}
	Since $f_\R \in W^{1,n}_\loc(\R^n)$ and $\abs{\nabla \eta} \leq 2/r$, the Sobolev-Poincar\'e inequality then yields that
	\begin{align*}
		&\left(\int_{\spt \abs{\nabla \eta_r}} 
			\eta_r^n\abs{Df}^n \right)^\frac{n-1}{n}
			\left( \int_{B^n(0, 2r)} \abs{f_{\R} - (f_{\R})_{B^n(0, 2r)}}^n 
			\abs{\nabla \eta_r}^n \right)^\frac{1}{n}\\
		&\qquad\leq 4 \left(\int_{\spt \abs{\nabla \eta_r}} 
			\eta_r^n\abs{Df}^n \right)^\frac{n-1}{n}
			\left( \frac{1}{(2r)^n}\int_{B^n(0, 2r)} 
			\abs{f_{\R} - (f_{\R})_{B^n(0, 2r)}}^n \right)^\frac{1}{n}\\
		&\qquad\leq 4C_n\left(\int_{\R^n \setminus B^n(0, r)}
			\abs{Df}^n \right)^\frac{n-1}{n}
			\left( \int_{B^n(0, 2r)} \abs{\nabla f_{\R}}^n \right)^\frac{1}{n}.
	\end{align*}
	Since $\abs{\nabla f_{\R}} \leq \abs{Df} \in L^n(\R^n)$, the first integral term on the right hand side tends to 0 as $r \to \infty$, while the second term stays bounded. Since $\abs{\eta_r^n J_f} \leq \abs{Df}^n$, the claim therefore follows by dominated convergence.
\end{proof}

\section{H\"older continuity}\label{sect:Holder}
In this section, we prove the continuity part of Theorem \ref{thm:Holder}.  Our proof is based on  Morrey's rather elegant ideas in geometric function theory~\cite{Morrey-Holder, Reshetnyak_continuity, Iwaniec-Martin_book}. A crucial tool in establishing the sharp H\"older exponent is the isoperimetric inequality in the Sobolev space $W_{\loc}^{1,n} (\Omega, \R^n)$. For $x\in \Omega$ and almost every $r>0$ such that $B_r=B^n(x,r)$ compactly contained in $\Omega$, we have
\begin{equation}
\label{eq:isoperimetric}
\left|  \int_{B_r} J_f \right| \le (n \sqrt[n-1]{\omega_{n-1}})^{-1} 
\left(\int_{\partial B_r} \abs{D^\sharp f} \right)^\frac{n}{n-1},
\end{equation}
where $\omega_{n-1}$ is the $(n-1)$-dimensional area of the unit sphere $\partial B_1$ in $\R^n$. Here $D^\sharp f (x)$ stands for the cofactor matrix of the differential matrix  $Df(x)$. For a diffeomorphism $f \colon B_r \to U$ this integral form  of the isoperimetric inequality follows immediately  from  the familiar geometric form of the isoperimetric inequality
\[
n^{n-1} \omega_{n-1} [m_n(U)]^{n-1} \le [m_{n-1}(\partial U)]^n,
\]
where $m_n(U)$ stands for the volume of a domain $U\subset \R^n$ and $m_{n-1}({\partial U})$ is its $(n-1)$-dimensional surface area. For the proof of~\eqref{eq:isoperimetric} for Sobolev mappings see  Reshetnyak \cite[Lemma II.1.2.]{Reshetnyak-book} for a more detailed account.

We begin with the primary estimate our proof relies on.
\begin{lemma}\label{lem:local_integral_ineq}
	Let $\Omega, \Omega' \subset \R^n$ be bounded domains with $\overline{\Omega} \subset \Omega'$. Suppose that $f \in W^{1,n}(\Omega', \R^n)$ satisfies the heterogeneous distortion inequality \eqref{eq:main_definition} for $K \in [1, \infty)$ and $\sigma \in L^n(\Omega') \cap L^{n+\eps}(\Omega')$, where $\eps > 0$. Let $x \in \Omega$, let $B_r = B(x, r)$ for all $r \in (0, \infty)$, let $R > 0$ be such that $B_R \subset \Omega$. Then for every $\delta < \eps$ and a.e.\ $r < R$ we have the estimate
	\[
		\int_{B_r} \abs{Df}^n 
		\leq \frac{Kr}{n} \int_{\partial B_r} \abs{Df}^n
		+ C r^\frac{n\delta}{n+\delta},
	\]
	where $C$ depends only on $n$, $\Omega$, $f$, $\sigma$, $\eps$ and $\delta$. In particular, $C$ doesn't depend on $x$, $r$ and $R$. Moreover, if $f \in L^\infty(\Omega, \R^n)$, then the estimate also holds for $\delta = \eps$.
\end{lemma}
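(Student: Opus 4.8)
The plan is to run Morrey's classical argument for quasiregular maps, treating $\sigma^n\abs{f}^n$ as a perturbation. First I would set $\phi(r) = \int_{B_r}\abs{Df}^n$; since $\abs{Df}^n \in L^1(\Omega')$, this $\phi$ is absolutely continuous on $(0,R)$ with $\phi'(r) = \int_{\partial B_r}\abs{Df}^n$ for a.e.\ $r$, and \eqref{eq:isoperimetric} also holds for a.e.\ such $r$. Integrating the heterogeneous distortion inequality \eqref{eq:main_definition} over $B_r$ gives $\phi(r) \le K\int_{B_r}J_f + \int_{B_r}\sigma^n\abs{f}^n$. For the first term I would argue exactly as for quasiregular maps: if $\int_{B_r}J_f\le 0$ it is already $\le 0 \le \frac{Kr}{n}\phi'(r)$; otherwise \eqref{eq:isoperimetric}, the pointwise bound $\abs{D^\sharp f}\le\abs{Df}^{n-1}$, and Hölder's inequality on $\partial B_r$ with exponents $\tfrac{n}{n-1}$ and $n$ give $\big(\int_{\partial B_r}\abs{D^\sharp f}\big)^{n/(n-1)} \le \omega_{n-1}^{1/(n-1)}\, r\,\phi'(r)$, hence $K\int_{B_r}J_f \le \frac{Kr}{n}\phi'(r)$. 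Thus in all cases $\phi(r) \le \frac{Kr}{n}\phi'(r) + \int_{B_r}\sigma^n\abs{f}^n$.

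The real content is the bound $\int_{B_r}\sigma^n\abs{f}^n \le C\, r^{n\delta/(n+\delta)}$. We may assume $0<\delta<\eps$, since for $\delta\le 0$ the claimed inequality is weaker than the one for some small positive $\delta'$. I would fix conjugate exponents $p,q$ (so $\frac1p+\frac1q=1$) determined by $\frac1p = \frac{n}{n+\eps}+\frac{\delta}{n+\delta}$; as $0<\frac{\delta}{n+\delta}<\frac{\eps}{n+\eps}$ this forces $1<p<\frac{n+\eps}{n}$, and $q=\frac{p}{p-1}<\infty$ exactly because $\delta<\eps$. Covering the compact set $\overline\Omega$ by finitely many balls compactly contained in $\Omega'$, with union $U$, the Sobolev embedding (applied on each ball) gives $f\in L^{s}(U)$ for all $s<\infty$, in particular $\norm{f}_{L^{nq}(U)}<\infty$. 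Every ball $B_r$ in the statement satisfies $B_r\subset B_R\subset\Omega\subset U\Subset\Omega'$, so Hölder's inequality gives $\int_{B_r}\sigma^n\abs{f}^n \le \big(\int_{B_r}\sigma^{np}\big)^{1/p}\norm{f}_{L^{nq}(U)}^{n}$, and since $np<n+\eps$ a second application of Hölder's inequality yields $\big(\int_{B_r}\sigma^{np}\big)^{1/p}\le \norm{\sigma}_{L^{n+\eps}(\Omega')}^{n}\abs{B_r}^{1/p-n/(n+\eps)} = C'\, r^{\,n(1/p-n/(n+\eps))} = C'\, r^{\,n\delta/(n+\delta)}$. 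All the sets and exponents involved depend only on $n,\Omega,f,\sigma,\eps,\delta$ and not on $x,r,R$, so combining these estimates with the first paragraph proves the lemma.

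If in addition $f\in L^\infty(\Omega,\R^n)$, I would run the same computation with $\delta=\eps$, i.e.\ $p=1$ and $q=\infty$, using $\norm{f}_{L^\infty(\Omega)}$ in place of $\norm{f}_{L^{nq}(U)}$, which again produces the power $r^{n\eps/(n+\eps)}$. The only delicate point in all of this is the bookkeeping in the second paragraph: the exponent $p$ must be tuned precisely so that $\abs{B_r}^{1/p-n/(n+\eps)}$ produces exactly the power $r^{n\delta/(n+\delta)}$, and it is the strict inequality $\delta<\eps$ that keeps the conjugate exponent $q$ finite, so that the (otherwise borderline) embedding $W^{1,n}\hookrightarrow L^{nq}$ can be invoked — the threshold $\delta=\eps$ being exactly where one must instead assume $f$ bounded. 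The isoperimetric half of the argument is routine and identical to the classical quasiregular computation.
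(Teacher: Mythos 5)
Your argument is correct and is essentially the paper's own proof: integrate the distortion inequality over $B_r$, control $K\int_{B_r}J_f$ by the isoperimetric inequality \eqref{eq:isoperimetric} together with Hadamard's bound and H\"older on $\partial B_r$ to get $\frac{Kr}{n}\int_{\partial B_r}\abs{Df}^n$, and bound $\int_{B_r}\sigma^n\abs{f}^n$ by $Cr^{n\delta/(n+\delta)}$ using $\sigma\in L^{n+\eps}$ and the fact that $f\in L^p$ locally for all finite $p$ by Sobolev embedding (with $\norm{f}_\infty$ replacing this at the endpoint $\delta=\eps$). The only cosmetic difference is that you organize the perturbation estimate as two successive H\"older applications with a tuned exponent $p$, whereas the paper uses a single three-exponent H\"older inequality; the exponents and conclusion are the same.
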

\begin{proof}
	By using the heterogeneous distortion inequality, the isoperimetric inequality~\eqref{eq:isoperimetric} for $W^{1,n}$-mappings, Hadamard's inequality $\abs{D^\sharp f} \le \abs{Df}^{n-1}$, and H\"older's inequality, we obtain for a.e.\ $r < R$ the estimate
	\begin{align*}
		\int_{B_r} \abs{Df}^n
		&\leq K \int_{B_r} J_f + \int_{B_r} \abs{f}^n \sigma^n\\
		&\leq \frac{K}{n \sqrt[n-1]{\omega_{n-1}}}
			\left(\int_{\partial B_r} \abs{Df}^{n-1}\right)^\frac{n}{n-1}
			+ \int_{B_r} \abs{f}^n \sigma^n\\
		&\leq \frac{Kr}{n}
			\int_{\partial B_r} \abs{Df}^{n}
		+ \int_{B_r} \abs{f}^n \sigma^n
	\end{align*}
	For the final term, we note that for every $p < \infty$, we have $f \in L^p_\loc(\Omega', \R^n)$ by the Sobolev embedding theorem, and consequently also $f \in L^p(\Omega, \R^n)$. Hence, we may use H\"older's inequality to obtain the desired estimate
	\begin{align*}
		\int_{B_r} \abs{f}^n \sigma^n
		&\leq \left( \int_{B_r} \sigma^{n+\eps} \right)^\frac{n}{n+\eps}
		\left( \int_{B_r} 1 \right)^\frac{\delta}{n-\delta}
		\left( \int_{B_r} 
			\abs{f}^\frac{(n+\delta)(n+\eps)}{\eps - \delta}
			\right)^\frac{n(\eps - \delta)}{(n+\delta)(n+\eps)}\\
		&\leq \left( \int_{\Omega} \sigma^{n+\eps} \right)^\frac{n}{n+\eps}
		\left( \int_{\Omega} 
			\abs{f}^\frac{(n+\delta)(n+\eps)}{\eps - \delta}
			\right)^\frac{n(\eps - \delta)}{(n+\delta)(n+\eps)}
		r^\frac{n\delta}{n + \delta}.
	\end{align*}
	Moreover, if we additionally know that $f \in L^\infty(\Omega, \R^n)$, we obtain the claim for $\delta = \eps$ by estimating
	\[
		\int_{B_r} \abs{f}^n \sigma^n
		\leq \norm{f}_\infty 
			\left( \int_{\Omega} \sigma^{n+\eps} \right)^\frac{n}{n+\eps}
			r^\frac{n\eps}{n + \eps}.	
	\]
\end{proof}

Note that the estimate of Lemma \ref{lem:local_integral_ineq} is of the form $\Phi(r) \leq A r \Phi'(r) + B r^\alpha$. 
This differential inequality  allows us to obtain an estimate for the decay of $\Phi$ at 0, which in our case is a decay estimate on the integrals of  $\abs{Df}^n$ over balls. 

\begin{lemma}\label{lem:diff_ineq_solution_r}
	Suppose that $\Phi \colon [0, R] \to [0, S]$ is an absolutely continuous increasing function such that $\Phi(0) = 0$ and
	\begin{equation}\label{eq:Phi_diff_ineq_easy}
		\Phi(r) \leq A r \Phi'(r) + B r^\alpha
	\end{equation}
	for a.e.\ $r \in [0, R]$, where $A, \alpha > 0$ and $B \geq 0$. Then there exists a constant $C = C(A, B, \alpha, R, S)$ such that the following holds:
	\begin{itemize}
		\item if $\alpha < A^{-1}$, then for all $r \in [0, R]$ we have
		\[
			\Phi(r) \leq C r^{\alpha};
		\]
		\item if $\alpha = A^{-1}$, then for all $r \in [0, R]$ we have
		\[
			\Phi(r) \leq C r^{\alpha} \log\Bigl(\frac{Re^{1+\alpha^{-1}}}{r}\Bigr);
		\]
		\item if $\alpha > A^{-1}$, then for all $r \in [0, R]$ we have
		\[
			\Phi(r) \leq C r^{A^{-1}}.
		\]
	\end{itemize}
\end{lemma}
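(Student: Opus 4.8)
The plan is to treat the differential inequality $\Phi(r) \le A r \Phi'(r) + Br^\alpha$ as a linear first-order ODE inequality and integrate it explicitly. Dividing through by $A r$ gives $\Phi'(r) \ge \tfrac{1}{Ar}\Phi(r) - \tfrac{B}{A} r^{\alpha-1}$, which is a linear inequality for $\Phi$. The natural integrating factor for the homogeneous part $\Phi'=\tfrac{1}{Ar}\Phi$ is $r^{-1/A}$, since $\tfrac{d}{dr}\bigl(r^{-1/A}\Phi(r)\bigr) = r^{-1/A}\bigl(\Phi'(r) - \tfrac{1}{Ar}\Phi(r)\bigr) \ge -\tfrac{B}{A} r^{\alpha - 1 - 1/A}$ for a.e.\ $r$. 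Since $\Phi$ is absolutely continuous and $\Phi(0)=0$, the product $r^{-1/A}\Phi(r)$ is absolutely continuous on any interval $[\rho, R]$ and we can integrate this pointwise inequality from $r$ to $R$.

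The key step is then to carry out the integration $\int_r^R \tfrac{B}{A} s^{\alpha-1-1/A}\,\dd s$, and here the three cases arise precisely from the sign of the exponent $\alpha - 1 - 1/A$, equivalently from comparing $\alpha$ with $A^{-1}$. If $\alpha < A^{-1}$ the exponent $\alpha - 1 - A^{-1}$ is $< -1$, so the integral behaves like $r^{\alpha - A^{-1}}$ near $r=0$ and, after multiplying back by $r^{1/A}$, yields $\Phi(r) \lesssim r^\alpha + R^{-1/A}S\, r^{1/A}$; since $\alpha < A^{-1}$ and $r \le R$, the second term is absorbed into a constant times $r^\alpha$. If $\alpha = A^{-1}$ the integrand is exactly $s^{-1}$ up to the constant $B/A$, so the integral produces a $\log(R/r)$ factor, giving $\Phi(r) \lesssim r^{1/A}\log(R/r) + R^{-1/A}S\,r^{1/A}$; bounding the bare $r^{1/A}$ term by a multiple of $r^{1/A}\log(R e^{1+\alpha^{-1}}/r)$ (valid since the logarithm there is $\ge 1 + \alpha^{-1} \ge 1$ for $r \le R$) puts it in the stated form. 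If $\alpha > A^{-1}$ the exponent exceeds $-1$, the integral converges at $0$ and is bounded by a constant, so after multiplying by $r^{1/A}$ one gets $\Phi(r) \lesssim r^{1/A}$, the decay now being limited by the homogeneous rate rather than the forcing term $Br^\alpha$.

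The only mild subtlety — and the main thing to be careful about — is justifying the integration near $r=0$: one must check that $r^{-1/A}\Phi(r) \to 0$ as $r \to 0^+$ in the cases $\alpha \ge A^{-1}$ so that the boundary term at $0$ drops out, whereas in the case $\alpha < A^{-1}$ this limit need not vanish and instead one integrates from $r$ up to $R$ and uses the value $\Phi(R) \le S$ at the top endpoint. In all three cases it is cleanest to integrate $\tfrac{d}{ds}\bigl(s^{-1/A}\Phi(s)\bigr) \ge -\tfrac{B}{A}s^{\alpha-1-1/A}$ over $[r,R]$, obtaining $R^{-1/A}\Phi(R) - r^{-1/A}\Phi(r) \ge -\tfrac{B}{A}\int_r^R s^{\alpha - 1 - 1/A}\,\dd s$, hence $r^{-1/A}\Phi(r) \le R^{-1/A}S + \tfrac{B}{A}\int_r^R s^{\alpha-1-1/A}\,\dd s$, and then evaluating the elementary integral in each of the three regimes. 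Collecting the constants $A, B, \alpha, R, S$ into a single $C$ completes the proof; no further input beyond absolute continuity of $\Phi$ and $\Phi(0)=0$ is needed.
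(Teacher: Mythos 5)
Your proposal is correct and follows essentially the same route as the paper: the integrating factor $r^{-1/A}$, integration of the resulting derivative inequality over $[r,R]$ using $\Phi(R)\leq S$ at the upper endpoint, and the three-way case analysis on the sign of $\alpha - A^{-1}$, including the absorption of the $r^{1/A}$ term into $Cr^\alpha$ (resp.\ into the logarithmic bound) exactly as in the paper's argument. No changes needed.
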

\begin{proof}
	We observe that
	\[
		\frac{\dd}{\dd r} \left(- A r^{-A^{-1}} \Phi(r)\right)
		= \frac{\Phi(r) - A r \Phi'(r)}{r^{1 + A^{-1}}}
	\]
	for a.e.\ $r \in [0, R]$. Consequently, the estimate \eqref{eq:Phi_diff_ineq_easy} can be rewritten in the form
	\[
		\frac{\dd}{\dd r} \left(- A r^{-A^{-1}} \Phi(r)\right)
		\leq B r^{-1 -(A^{-1} - \alpha)}.
	\]
	We integrate this estimate, obtaining
	\begin{equation}\label{eq:integrated_estimate}
		\int_r^R \frac{\dd}{\dd s} 
			\left(- A s^{-A^{-1}} \Phi(s)\right) \dd s
		\leq B \int_r^R s^{-1 -(A^{-1} - \alpha)} \dd s.
	\end{equation}
	Consider first the case $\alpha < A^{-1}$. Computing the integrals in \eqref{eq:integrated_estimate} yields
	\[
		A(r^{-A^{-1}} \Phi(r) - R^{-A^{-1}} \Phi(R))
		\leq \frac{B}{A^{-1} - \alpha} 
			\left( r^{-(A^{-1} - \alpha)}-R^{-(A^{-1} - \alpha)}\right),
	\]
	and further rearrangement and estimation yields
	\[
		\Phi(r) 
		\leq r^{A^{-1}} \left( R^{-A^{-1}} \Phi(R) 
			+ \frac{B}{1 - A\alpha} r^{\alpha - A^{-1}} \right)
		\leq \left( R^{-\alpha} S 
			+ \frac{B}{1 - A\alpha} \right) r^\alpha
	\]
	Suppose then that $\alpha = A^{-1}$. Then \eqref{eq:integrated_estimate} results in
	\[
		A(r^{-A^{-1}} \Phi(r) - R^{-A^{-1}} \Phi(R))
		\leq B \log(R/r),
	\]
	and we may again further estimate
	\begin{multline*}
		\Phi(r) 
		\leq r^{A^{-1}} \left( R^{-A^{-1}} \Phi(R) 
			+ \frac{B}{A} \log \frac{R}{r}\right)\\
		\leq \left( R^{-A^{-1}} S + \frac{B}{A} \right) 
			r^{A^{-1}} \log\Bigl(\frac{Re^{1+\alpha^{-1}}}{r}\Bigr).
	\end{multline*}
	Finally, consider the case $\alpha > A^{-1}$. In this case, it follows from \eqref{eq:integrated_estimate} that
	\[
		A(r^{-A^{-1}} \Phi(r) - R^{-A^{-1}} \Phi(R))
		\leq \frac{B}{\alpha - A^{-1}} 
		\left( R^{\alpha - A^{-1}} - r^{\alpha - A^{-1}}\right),
	\]
	and further rearrangement and estimation yields
	\[
		\Phi(r) \leq r^{A^{-1}} \left(
			R^{-A^{-1}} S + \frac{B R^{\alpha - A^{-1}}}{A\alpha - 1}
			\right).
	\]
\end{proof}

For the remaining component to the proof of Theorem \ref{thm:Holder}, we recall a well known fact that the decay estimate on $\abs{Df}$ implies that $f$ belongs to a  Morrey--Campanato space~\cite{Morrey-space, Campanato-space}, and is thus H\"older continuous. The precise formulation of this fact that we use is as follows. 

\begin{lemma}\label{lem:Holder_continuity_general_log}
	Let $\Omega = B^n(x, R/4)$ for some $R > 0$ and $k \in \N$. Suppose that $f \in W^{1,n}(B^n(x, R), \R^k)$ satisfies 
	\begin{equation}\label{eq:decay_assumption}
		\int_{B_r} \abs{Df}^n \leq C r^\alpha \log^\beta \frac{L}{r}
	\end{equation}
	for all $B_r = B^n(y, r) \subset B^n(x, R)$, where $\alpha > 0$, $\beta \geq 0$, and $L > 0$ is large enough that $R < L e^{-\beta/\alpha}$. Then
	\[
		\abs{f(y) - f(z)} \leq C' \abs{y-z}^{\frac{\alpha}{n}} \log^\frac{\beta}{n} \frac{L}{4\abs{y - z}}
	\]
	for all $y, z \in \Omega$, where $C'$ depends on $n$, $k$, $C$, $A$, $\alpha$ and $\beta$.
\end{lemma}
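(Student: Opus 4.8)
The plan is to estimate the oscillation of $f$ on a ball $B_r = B^n(y,r)$ by comparing $f$ with its average $f_{B_r}$, and to control $\norm{f - f_{B_r}}$ via the Sobolev--Poincar\'e inequality, which brings the hypothesis \eqref{eq:decay_assumption} into play. Concretely, for $y, z \in \Omega = B^n(x, R/4)$ set $\rho = \abs{y - z}$; note $B^n(y, 2\rho) \subset B^n(x, R)$ since $y, z \in B^n(x, R/4)$ and $2\rho \le R$. The standard telescoping argument over the dyadic radii $r_j = 2^{-j}\rho$ (for $j \ge 0$), together with the fact that $z \in B^n(y, \rho)$, reduces the problem to summing the quantities $\abs{f_{B^n(y, r_j)} - f_{B^n(y, r_{j+1})}}$ and the analogous chain centered at $z$. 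Each successive difference of averages is controlled by $c_n \, r_j^{-n} \int_{B^n(y, r_j)} \abs{f - f_{B^n(y, r_j)}}$, which by H\"older and the Sobolev--Poincar\'e inequality on the ball $B^n(y, r_j)$ is at most $c_n' \, r_j^{1 - n} \bigl( \int_{B^n(y, r_j)} \abs{Df}^n \bigr)^{1/n} \cdot r_j^{n(1 - 1/n)} = c_n' \bigl( r_j^{-n + n}\int_{B^n(y, r_j)} \abs{Df}^n \bigr)^{1/n}$; more carefully, $r_j^{-n}\int_{B^n(y,r_j)}\abs{f - f_{B^n(y,r_j)}} \le c_n\, r_j^{-n} \cdot r_j^{n/n'} \bigl(\int_{B^n(y,r_j)}\abs{f - f_{B^n(y,r_j)}}^n\bigr)^{1/n} \le c_n'\, r_j^{-n + n - 1}\bigl(\int_{B^n(y,r_j)}\abs{Df}^n\bigr)^{1/n}$, where the last step uses Poincar\'e.

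Next I would insert the decay hypothesis \eqref{eq:decay_assumption}: on each ball $B^n(y, r_j) \subset B^n(x, R)$ we have $\int_{B^n(y, r_j)} \abs{Df}^n \le C r_j^\alpha \log^\beta(L/r_j)$, so each term in the telescoping sum is bounded by $C^{1/n} r_j^{-1} \bigl( r_j^\alpha \log^\beta(L/r_j)\bigr)^{1/n} = C^{1/n} r_j^{\alpha/n - 1} \log^{\beta/n}(L/r_j)$. Since $\alpha/n - 1 < 0$ is not automatic, one should be slightly careful; in fact the correct bookkeeping gives each term $\lesssim r_j^{\alpha/n}\log^{\beta/n}(L/r_j)$ after the $r_j^{-1}$ is absorbed, because one more factor of $r_j$ comes from writing $r_j^{-n}\int\abs{f-f_{B}} \le c\, r_j^{-1}\bigl(\fint_B \abs{f-f_B}^n\bigr)^{1/n}$ and then Poincar\'e yields $\bigl(\fint_B\abs{f-f_B}^n\bigr)^{1/n} \le c\, r_j \bigl(\fint_B \abs{Df}^n\bigr)^{1/n} = c\, r_j^{1-1}\bigl(r_j^{-n}\int_B\abs{Df}^n \cdot \omega_{n-1}^{-1}\bigr)^{1/n}$. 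Sorting the constants, each term is at most $c\,(r_j^{\alpha - n}\log^\beta(L/r_j) \cdot r_j^n)^{1/n} = c\, r_j^{\alpha/n}\log^{\beta/n}(L/r_j)$. Summing the geometric-type series $\sum_{j \ge 0} r_j^{\alpha/n}\log^{\beta/n}(L/r_j)$ with $r_j = 2^{-j}\rho$: since $\alpha/n > 0$ the powers $2^{-j\alpha/n}$ decay geometrically, and the logarithmic factors $\log^{\beta/n}(L/(2^{-j}\rho)) = (\log(L/\rho) + j\log 2)^{\beta/n}$ grow only polynomially in $j$, so the series converges and is comparable to its first term, namely $c' \rho^{\alpha/n}\log^{\beta/n}(L/\rho)$. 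The condition $R < L e^{-\beta/\alpha}$ guarantees $\log(L/r) > \beta/\alpha > 0$ on the relevant range, so all logarithms are bounded below and the manipulations are legitimate.

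Finally I would combine the two chains (centered at $y$ and at $z$) with the trivial bound $\abs{f_{B^n(y,\rho)} - f_{B^n(z,\rho)}} \le \abs{f_{B^n(y,\rho)} - f_{B^n(y,2\rho)}} + \abs{f_{B^n(z,\rho)} - f_{B^n(y,2\rho)}}$, noting $B^n(z,\rho) \subset B^n(y, 2\rho)$, and each of these is again controlled by the same Poincar\'e estimate on $B^n(y,2\rho)$. This yields $\abs{f(y) - f(z)} \le C' \rho^{\alpha/n}\log^{\beta/n}(L/\rho)$; replacing $\log(L/\rho)$ by $\log(L/(4\rho))$ costs only a constant (again using the lower bound on the logarithm from $R < Le^{-\beta/\alpha}$), giving the stated form. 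The main technical point to get right is the convergence of the sum with the logarithmic weights and the careful tracking of powers of $r_j$ through the Sobolev--Poincar\'e step; there is no deep obstacle, only the bookkeeping of exponents and the use of the hypothesis $R < Le^{-\beta/\alpha}$ to keep every logarithm positive.
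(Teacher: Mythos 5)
Your argument is correct in substance, but it follows a genuinely different route from the paper. The paper does not telescope over dyadic balls at all: for $\beta = 0$ it simply invokes Morrey's classical theorem, and for general $\beta$ it deduces the lemma from Hedberg's pointwise estimate
\[
	\abs{u(y)-u(z)} \le C_{n,\gamma}\abs{y-z}^{1-\gamma}\left(\maxfun_{\gamma,4\abs{y-z}}\abs{Du}(y)+\maxfun_{\gamma,4\abs{y-z}}\abs{Du}(z)\right)
\]
with $\gamma$ close to $1$: H\"older's inequality turns the decay hypothesis into the bound $r^{\gamma}\,\fint_{B(y,r)}\abs{Du} \lesssim r^{\gamma-1+\alpha/n}\log^{\beta/n}(L/r)$, which is increasing in $r$ once $\gamma>1-\alpha/n$ (this is where the hypothesis $R<Le^{-\beta/\alpha}$ is used), so the restricted maximal function is controlled by its value at $r=4\abs{y-z}$ and the factor $\abs{y-z}^{1-\gamma}$ cancels, giving exactly the stated modulus. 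Your dyadic Campanato-type chaining proves the same estimate from scratch: it is more elementary and self-contained, and it makes transparent why the series $\sum_{j\ge 0} 2^{-j\alpha/n}\bigl(\log(L/\rho)+j\log 2\bigr)^{\beta/n}$ converges and is comparable to its first term; the price is more exponent bookkeeping, whereas the Hedberg route is essentially one line given the known pointwise inequality and delivers the $4\abs{y-z}$ normalization directly.

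One geometric detail in your write-up is wrong as stated: for $y,z\in B^n(x,R/4)$ with $\rho=\abs{y-z}$ one can have $\rho$ close to $R/2$, and then $B^n(y,2\rho)$ need not be contained in $B^n(x,R)$ (containment requires $\abs{y-x}+2\rho\le R$, i.e.\ $\rho\le 3R/8$), so the decay hypothesis cannot be applied to that ball. This is harmless and easily repaired: bridge the two chains through the ball $B^n\bigl(\tfrac{y+z}{2},\tfrac{3\rho}{2}\bigr)$, which contains both $B^n(y,\rho)$ and $B^n(z,\rho)$ and lies inside $B^n(x,R)$ because $\tfrac{R}{4}+\tfrac{3\rho}{2}<R$; only the constants change. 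With that repair, and the standard remark that the telescoping argument identifies $f$ with its continuous (Lebesgue-point) representative so that the inequality holds for all $y,z\in\Omega$, your proof is complete.
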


Note that the assumption $R < L e^{-\beta/\alpha}$ is to ensure that $r^\alpha \log^\beta (A/r)$ is increasing on $[0, R]$. Lemma~\ref{lem:Holder_continuity_general_log} is merely a small variant of a classical result of Morrey \cite{Morrey-Holder} with an extra logarithmic term, where the logarithmic term becomes relevant when investigating the exact modulus of continuity. See \cite[Theorem 3.5.2]{Morrey-Book} for a proof in the classical case $\beta = 0$. For general $\beta$, we note that Lemma~\ref{lem:Holder_continuity_general_log} also follows from the fractional maximal function estimate of Sobolev functions: if $u \in W^{1,1}_\loc(\R^n)$ and $\gamma \in (0,1)$, then for all $y,z \in \R^n$ outside a set of measure zero, we have
\begin{multline}\label{eq:frac_maximal_estimate}
	\abs{u(y)-u(z)}\\
	\le C_{n, \gamma} \abs{y-z}^{1-\gamma} \left(\maxfun_{\gamma, 4\abs{y-z}} \abs{Du}(y)
		+ \maxfun_{\gamma, 4\abs{y-z}} \abs{Du}(z) \right),
\end{multline}
where $\maxfun_{\gamma, R}$ stands for the \emph{restricted fractional maximal function}
\[
	\maxfun_{\gamma, R} \abs{Du}(y) = \sup_{0<r<R} \frac{r^\gamma}{m_n(B(y,r))} \int_{B(y,r)} \abs{Du}.
\]
Indeed, taking $\gamma$ close to $1$ and combining \eqref{eq:decay_assumption} with \eqref{eq:frac_maximal_estimate} yields the desired estimate of Lemma \ref{lem:Holder_continuity_general_log}.
The proof of \eqref{eq:frac_maximal_estimate} is due to Hedberg~\cite{Hedberg}.

We are now ready to prove the local H\"older continuity stated in Theorem \ref{thm:Holder}.

\begin{lemma}\label{lem:Holder_continuity}
	Let $\Omega = B^n(x, R)$ for some $R > 0 $. Suppose that $f \in W^{1,n}(B^n(x, 5R), \R^n)$ satisfies the heterogeneous distortion inequality \eqref{eq:main_definition} with $K \in [1, \infty)$ and $\sigma \in L^n(B^n(x, 5R)) \cap L^{n+\eps}(B^n(x, 5R))$, where $\eps > 0$. 
	
	If $K^{-1} \neq \eps/(n+\eps)$, then
	\[
		\abs{f(y) - f(z)} \leq C \abs{y-z}^{\min(K^{-1}, \frac{\eps}{n+\eps})}
	\]
	for all $y, z \in \Omega$, where $C = C(n, K, \eps, \sigma, f)$. 
	
	If  $K^{-1} = \eps/(n+\eps)$, then
	\[
		\abs{f(y) - f(z)} \leq C \abs{y-z}^{K^{-1}} \log^{\frac{1}{n}} \Bigl(\frac{Re^{1+K}}{\abs{y-z}}\Bigr)
	\]
	for all $y, z \in \Omega$, where $C = C(n, K, \eps, \sigma, f)$.
\end{lemma}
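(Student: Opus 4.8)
The plan is to chain together the three preceding lemmas. Fix a center $y$ close to $x$ and, for radii $r$ small enough that the relevant balls stay inside the ball on which $f$ is defined, set $\Phi_y(r) = \int_{B^n(y,r)} \abs{Df}^n$. Since $\abs{Df}^n \in L^1$, the coarea formula shows that $\Phi_y$ is absolutely continuous and increasing, with $\Phi_y(0) = 0$, $\Phi_y(r) \leq \int \abs{Df}^n$, and $\Phi_y'(r) = \int_{\partial B^n(y,r)} \abs{Df}^n$ for a.e.\ $r$. Lemma~\ref{lem:local_integral_ineq} with a parameter $\delta \leq \eps$ then reads $\Phi_y(r) \leq \frac{K}{n} r \Phi_y'(r) + C r^{n\delta/(n+\delta)}$ for a.e.\ $r$, with $C$ independent of $y$. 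Feeding this into Lemma~\ref{lem:diff_ineq_solution_r} with $A = K/n$ and $\alpha = n\delta/(n+\delta)$ produces a decay estimate $\Phi_y(r) \leq C' r^{\min(n\delta/(n+\delta),\, n/K)}$, carrying an extra factor $\log(1/r)$ exactly in the equality case $n\delta/(n+\delta) = n/K$. Applying Lemma~\ref{lem:Holder_continuity_general_log} with $k = n$ then converts this into the bound $\abs{f(y) - f(z)} \lesssim \abs{y-z}^{\min(\delta/(n+\delta),\, 1/K)}$, again with a logarithmic correction only in the critical case.

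It remains to let $\delta$ increase to $\eps$. When $\gamma_K < \gamma_\eps$, choosing $\delta$ close to $\eps$ already makes $n\delta/(n+\delta) > n/K$, so Lemma~\ref{lem:diff_ineq_solution_r} gives $\Phi_y(r) \lesssim r^{n/K}$ directly and $f \in C^{0,\gamma_K}_\loc$ with no logarithm; no boundedness of $f$ is needed. When $\gamma_K > \gamma_\eps$, the same argument with $\delta < \eps$ only yields exponent $\delta/(n+\delta) < \gamma_\eps$, so to reach the sharp $\gamma_\eps$ one must take $\delta = \eps$ in Lemma~\ref{lem:local_integral_ineq}, which is permitted there only if $f$ is bounded. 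This is arranged by a bootstrap: the argument with any fixed $\delta < \eps$ first shows $f$ is locally H\"older, hence locally bounded, after which a second application with $\delta = \eps$ gives $f \in C^{0,\gamma_\eps}_\loc$. Finally, when $\gamma_K = \gamma_\eps$, the bootstrap again makes $\delta = \eps$ available, and this is precisely the equality case of Lemma~\ref{lem:diff_ineq_solution_r}; the resulting estimate $\Phi_y(r) \lesssim r^{n/K}\log(L/r)$ passes through Lemma~\ref{lem:Holder_continuity_general_log} to give the stated bound, where $L$ is enlarged to $Re^{1+K}$ so that $r \mapsto r^{n/K}\log(L/r)$ is increasing on the whole range of radii in play, as required by the hypotheses of Lemma~\ref{lem:Holder_continuity_general_log}.

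The routine part is the bookkeeping of radii and the uniformity of constants: when applying Lemma~\ref{lem:local_integral_ineq} around a center $y$ one must ensure that the doubled ball still lies inside the domain of $f$ (and, on the second pass, inside a region where $f$ is known to be bounded), and that the constants produced by Lemmas~\ref{lem:local_integral_ineq} and~\ref{lem:diff_ineq_solution_r} are independent of $y$; this is handled by carrying out the estimate on balls of a fixed radius comparable to $R$, or by a standard covering of $\overline{B^n(x,R)}$ by small balls, together with the trivial bound $\int_{B^n(y,r)}\abs{Df}^n \leq \int \abs{Df}^n$ for the remaining large radii. The one genuinely delicate point is the bootstrap itself: since $f$ is not assumed bounded, the sharp endpoint $\delta = \eps$ in Lemma~\ref{lem:local_integral_ineq} is unavailable a priori, so one is forced to first extract H\"older regularity with a strictly smaller exponent and only then re-run the scheme to obtain the optimal exponent (and, in the borderline case, the logarithmic refinement). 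Everything else follows by routine combination of the lemmas already established.
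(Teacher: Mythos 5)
Your proposal is correct and follows essentially the same route as the paper's proof: interpret Lemma~\ref{lem:local_integral_ineq} as the differential inequality $\Phi(r)\leq Ar\Phi'(r)+Br^{\alpha}$, solve it via Lemma~\ref{lem:diff_ineq_solution_r}, convert the decay into H\"older continuity via Lemma~\ref{lem:Holder_continuity_general_log}, and use the same bootstrap (a first pass with $\delta<\eps$ giving a non-sharp H\"older bound, hence local boundedness, so that the second pass may take $\delta=\eps$ and, in the critical case $\gamma_K=\gamma_\eps$, pick up the logarithmic factor). Your extra remark that the case $\gamma_K<\gamma_\eps$ needs no boundedness is a correct minor refinement but does not change the argument.
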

\begin{proof}
	We first prove a slightly weaker H\"older continuity estimate for $f$ than is claimed. This in turn implies the local boundedness of $f$, which lets us apply Lemma~\ref{lem:local_integral_ineq} in full force and to obtain the stated estimates. 
	
	We set $\alpha = \min(n\eps/(n+\eps), n/K)$ and choose $\alpha' \in (0, \alpha)$.
	Applying Lemmas~\ref{lem:local_integral_ineq} and \ref{lem:diff_ineq_solution_r} we conclude that
	\begin{equation}\label{eq:ball_decay_for_alpha}
		\int_{B_r} \abs{Df}^n \leq C r^{\alpha'}
	\end{equation}
	for all $B_r = B^n(y, r) \subset B^n(x, 4R)$.  Therefore, it follows from Lemma \ref{lem:Holder_continuity_general_log} that $f$ is $(\alpha'/n)$-H\"older continuous in $B^n(x, R)$. Since continuity is a local property, we conclude that $f$ is continuous, and in particular bounded in $B^n(x, 4R)$. 
	
	Now, knowing that $f$ is locally bounded we may and do take $\delta=\eps$ in  Lemma~\ref{lem:local_integral_ineq}. Combining this with Lemma~\ref{lem:diff_ineq_solution_r}, we obtain the following decay estimate for the differential: 
	\begin{equation}\label{eq:ball_decay_for_alpha_log}
		\int_{B_r} \abs{Df}^n \leq C r^\alpha \log^\beta
		\Bigl(\frac{(4R)e^{1+K}}{r}\Bigr)\, , \qquad \textnormal{where} \quad  \begin{cases} \beta =0 & \textnormal{if } \;  \frac{n \eps }{n+\eps }\not= \frac{n}{K} \\
		\beta =1 \; \;  & \textnormal{if } \;  \frac{n \eps }{n+\eps }= \frac{n}{K}\, . 
		\end{cases}
	\end{equation}
	for all $B_r$. Thus, the desired H\"older continuity estimates for $f$ follow from Lemma~\ref{lem:Holder_continuity_general_log}.

\end{proof}

\section{Sharpness of the H\"older exponents}
Having Lemma \ref{lem:Holder_continuity}, the remaining part of proving Theorem \ref{thm:Holder} is to construct solutions which show that the obtained H\"older exponents cannot be improved. Recalling the notation $\gamma_K = K^{-1}$ and $\gamma_\eps = \eps / (n + \eps)$, we consider three different cases: $\gamma_K < \gamma_\eps$, $\gamma_K > \gamma_\eps$ and $\gamma_K = \gamma_\eps $.

For the first case $\gamma_K < \gamma_\eps$, we can simply use the standard radial example
\[
	f(x) = \abs{x}^{\frac{1}{K}} \frac{x}{\abs{x}}.
\]
Indeed, the mapping $f$ is $K$-quasiregular and hence satisfies \eqref{eq:main_definition} with $\sigma \equiv 0$, and we also have $f \notin C^{0, \gamma}(B^n(0, 1))$ for every $\gamma > K^{-1}$. 

Next, we discuss the case $\gamma_K>  \gamma_\eps$ in-depth.

\begin{ex}\label{ex:sobolev_dominant_case}
	Let $K \geq 1$ and $\eps > 0$ such that $K^{-1} > \eps/(n+\eps)$. We define a mapping $f \colon B^n(0, 1) \to \R^n$ with only a single non-vanishing coordinate function, namely
	\[
		f(x) = \left(1 + \abs{x}^\frac{\eps}{n+\eps}
			\log^{-\frac{1}{n}}\left( \frac{e}{\abs{x}}\right), 
			0, 0, \dots, 0 \right).
	\]
	This mapping lies in $W^{1, n}(B^n(0, 1), \R^n)$, with
	\[
		\nabla f_1(x) = \abs{x}^{-\frac{n}{n+\eps}} \left(
			\frac{\eps}{n+\eps} 
				\log^{-\frac{1}{n}} \left(\frac{e}{\abs{x}}\right)
			+ \frac{1}{n}
				\log^{-\frac{n+1}{n}} \left(\frac{e}{\abs{x}}\right)
		\right)\frac{x}{\abs{x}} \, . 
	\] 
	Furthermore, $J_f \equiv 0$.  Hence, the heterogeneous distortion inequality \eqref{eq:main_definition} for $f$ reduces to
	\[
		\abs{\nabla f_1}^n \leq \abs{f}^n \sigma^n.
	\]
	Since  $\abs{f(x)} = f_1(x) \geq 1$ for every  $x\in B^n(0, 1)$, the mapping $f$ solves the heterogeneous distortion inequality for  any $\sigma \geq \abs{\nabla f_1}$. We choose
	\[
		\sigma = \abs{\nabla f_1} = \abs{x}^{-\frac{n}{n+\eps}} \left(
			\frac{\eps}{n+\eps} 
				\log^{-\frac{1}{n}} \left(\frac{e}{\abs{x}}\right)
			+ \frac{1}{n}
				\log^{-\frac{n+1}{n}} \left(\frac{e}{\abs{x}}\right)
		\right)
	\]
	and then observe that
	\[
		\sigma^{n+\eps} \leq 2^{n+\eps} \abs{x}^{-n} \left(
			\frac{\eps}{n+\eps} 
				\log^{-\frac{n+\eps}{n}} \left(\frac{e}{\abs{x}}\right)
			+ \frac{1}{n}
			\log^{-\frac{(n+1)(n+\eps)}{n}}\left(\frac{e}{\abs{x}}\right)
		\right).
	\]
	We recall that for any $p > 1$, the function $\abs{x}^{-n} \log^{-p}(e/\abs{x})$ is integrable over $B^n(0, 1)$. Indeed,
	\begin{multline*}
		\int_{B^n(0, 1)} \abs{x}^{-n} \log^{-p}\frac{e}{\abs{x}}
		= C_n \int_0^1	\frac{r^{n-1} \dd r}{r^n \log^p \frac{e}{r}}\\
		= C_n \int_0^1 \frac{\dd}{\dd r} \left(\log^{-(p-1)} \frac{e}{r} \right) \dd r
		< \infty.
	\end{multline*}
	Hence,  the mapping $f$ solves~\eqref{eq:main_definition} with $\sigma \in L^{n+\eps}(B^n(0, 1))$. However, for any exponent $\gamma > \eps/(n+\eps)$, the map $f$ fails to be $\gamma$-Hölder continuous at the origin.
\end{ex}

The remaining part of the proof of Theorem \ref{thm:Holder} is therefore to provide an example in the special case $\gamma_K = \gamma_\eps$.

\begin{ex}\label{ex:magic_exponent}
	Let $K \geq 1$ and $\eps > 0$, and suppose that $K^{-1} = \eps/(n+\eps)$. We define a mapping $f \colon B^n(0, 1) \to \R^n$ by  
	\[
		f(x) = \abs{x}^\frac{1}{K} 
			\log^\frac{1}{2nK} \left(\frac{e}{\abs{x}}\right)
			\frac{x}{\abs{x}} + (2, 0, 0, \dots, 0).
	\]
	The mapping $f$ is hence obtained by shifting a   radially symmetric map of the form $(\Phi(\abs{x})/\abs{x}) x$, where $\Phi(t) = t^{1/K} \log^{1/(2nK)} (e/t)$. For $x \in B^n(0, 1)$ we have
	\begin{align*}
		\abs{\frac{\Phi(\abs{x})}{\abs{x}}}
			&= \abs{x}^{-\frac{K-1}{K}} 
				\log^{\frac{1}{2nK}} \left( \frac{e}{\abs{x}} \right) 
				\quad \text{and}\\
		\abs{\Phi'(\abs{x})}
			&= \abs{x}^{-\frac{K-1}{K}} \left(
				\frac{1}{K} \log^\frac{1}{2nK} 
					\left( \frac{e}{\abs{x}} \right)
				- \frac{1}{2nK} \log^{-\frac{2nK -1}{2nK}} 
				\left( \frac{e}{\abs{x}} \right)
			\right).
	\end{align*}
	Using these and the fact that $f$ is orientation preserving, we conclude, see e.g.~\cite[6.5.1]{Iwaniec-Martin_book}, that
	\begin{align*}
		\abs{Df(x)}^n
		&=  \max \left( \abs{\frac{\Phi(\abs{x})}{\abs{x}}}^n, \abs{\Phi'(\abs{x})}^n \right)\\
		&= \abs{x}^{-\frac{n(K-1)}{K}} 
		\log^{\frac{1}{2K}} \left( \frac{e}{\abs{x}} \right)
	\end{align*}
	and
	\begin{align*}
		KJ_f(x)
		&=  K\abs{\frac{\Phi(\abs{x})}{\abs{x}}}^{n-1}\abs{\Phi'(\abs{x})}\\
		&= \abs{x}^{\frac{-n(K-1)}{K}} 
		\left(\log^{\frac{n}{2nK}} \left( \frac{e}{\abs{x}} \right) 
			- \frac{1}{2n} \log^{\frac{n-1}{2nK} - \frac{2nK -1}{2nK}} 
			\left( \frac{e}{\abs{x}} \right) 
		\right)\\
		&= \abs{x}^{\frac{-n(K-1)}{K}} 
		\left(\log^{\frac{1}{2K}} \left( \frac{e}{\abs{x}} \right) 
			- \frac{1}{2n} \log^{- \frac{2nK -n}{2nK}} 
			\left( \frac{e}{\abs{x}} \right) 
		\right).
	\end{align*}
	
	Since $\Phi$ is increasing on $[0, 1]$, we have $\abs{f(x)} \geq 2-\Phi(1) = 1$ for all $x \in B^n(0, 1)$. Therefore, the heterogeneous distortion inequality \eqref{eq:main_definition} is satisfied if we choose
	\[
		\sigma^n(x) = \frac{1}{2n} \abs{x}^{-\frac{n(K-1)}{K}} 
			\log^{-\frac{2nK -n}{2nK}} \left( \frac{e}{\abs{x}} \right).
	\]
	We then observe that
	\[
		\sigma^{n+\eps}(x) = \left( \sigma^n(x) \right)^\frac{K}{K-1}
		= \frac{1}{(2n)^\frac{K-1}{K}} \abs{x}^{-n}
			\log^{-\frac{(2nK - n)K}{2nK(K-1)}} \left( \frac{e}{\abs{x}} \right),
	\]
	and since
	\[
		\frac{(2nK - n)K}{2nK(K-1)} = \frac{2nK^2 - nK}{2nK^2 - 2nK} > 1,
	\]
	we conclude that $\sigma \in L^{n+\eps}(B^n(0, 1))$. However, we have
	\[
		\frac{\abs{f(x) - f(0)}}{\abs{x - 0}^{\frac{1}{K}}}
		= \log^\frac{1}{2nK} \left( \frac{e}{\abs{x}} \right)
		\xrightarrow[x \to \infty]{} \infty,
	\]
	and therefore $f \notin C^{0, K^{-1}}_\loc(B^n(0, 1))$.
\end{ex}

The proof of Theorem \ref{thm:Holder} is thus complete.

\section{Sublevel sets and the logarithm}

In this section, we begin studying bounded entire functions $f$ satisfying \eqref{eq:main_definition}, with the goal of eventually reaching the Liouville type theorem stated in the Astala-Iwaniec-Martin question. Our main goal in this section is to show that if $f$ is not identically zero, then $\log \abs{f} \in W^{1,n}_\loc(\R^n)$. This is already notable, since this condition is not satisfied by all unbounded entire quasiregular maps. Our approach does not rely on the theory of  partial differential equations. Instead, the proof is based on two main tools: integration by parts and truncating $f$ with respect to its level sets.

\subsection{Global integrability} 

We begin with a simple global integrability result for $Df$ when $f$ is an entire mapping that solves the heterogeneous distortion inequality \eqref{eq:main_definition}.

\begin{lemma}\label{lem:Df_global_integrability}
	Suppose that $f \in W^{1,n}_\loc(\R^n, \R^n)$ satisfies the heterogeneous distortion inequality \eqref{eq:main_definition} with $K \in [1, \infty)$ and $\sigma \in L^{n}(\R^n)$. If $f$ is bounded, then $\abs{Df} \in L^{n}(\R^n)$.
\end{lemma}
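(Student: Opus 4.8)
The plan is to run a Caccioppoli-type argument on cutoff balls and let the radius tend to infinity, exploiting that $f$ is bounded and $\sigma\in L^n(\R^n)$. First I would fix a smooth cutoff $\eta_r\in C^\infty_0(\R^n,[0,1])$ with $\eta_r\equiv 1$ on $B^n(0,r)$, $\eta_r\equiv 0$ outside $B^n(0,2r)$, and $\abs{\nabla\eta_r}\le 2/r$. Multiplying the heterogeneous distortion inequality \eqref{eq:main_definition} by $\eta_r^n$ and integrating gives
\[
	\int_{\R^n}\eta_r^n\abs{Df}^n
	\le K\int_{\R^n}\eta_r^n J_f
	+ \int_{\R^n}\eta_r^n\sigma^n\abs{f}^n .
\]
The second term on the right is bounded by $\norm{f}_\infty^n\norm{\sigma}_n^n<\infty$ uniformly in $r$. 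For the first term, the standard integration-by-parts estimate for $f\in W^{1,n}_\loc(\R^n,\R^n)$ (the Euclidean-target case of the inequality preceding Lemma \ref{lem:caccioppoli}, applied with the constant translate $c$ a fixed point, say $0$, or simply with $f$ itself) yields
\[
	\int_{\R^n}\eta_r^n J_f
	\le n\int_{\R^n}\abs{Df}^{n-1}\eta_r^{n-1}\abs{\nabla\eta_r}\abs{f} .
\]

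Next I would apply Hölder's inequality with exponents $n/(n-1)$ and $n$ to this last integral, bounding it by
\[
	n\,\norm{f}_\infty\Bigl(\int_{\spt\nabla\eta_r}\eta_r^n\abs{Df}^n\Bigr)^{\frac{n-1}{n}}
	\Bigl(\int_{\R^n}\abs{\nabla\eta_r}^n\Bigr)^{\frac1n}
	\le C_n\,\norm{f}_\infty\Bigl(\int_{B^n(0,2r)}\abs{Df}^n\Bigr)^{\frac{n-1}{n}},
\]
where I used $\abs{\nabla\eta_r}\le 2/r$ and $\abs{B^n(0,2r)}=C_n r^n$, so that $\int\abs{\nabla\eta_r}^n\le C_n$ independently of $r$. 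Since $\abs{Df}\in L^n_\loc(\R^n)$, the quantity $M(r):=\int_{B^n(0,2r)}\abs{Df}^n$ is finite for each $r$; combining the three displays gives, for every $r>0$,
\[
	\int_{B^n(0,r)}\abs{Df}^n
	\le \int_{\R^n}\eta_r^n\abs{Df}^n
	\le C\bigl(M(r)^{\frac{n-1}{n}}+1\bigr)
\]
for a constant $C=C(n,K,\norm{f}_\infty,\norm{\sigma}_n)$ not depending on $r$.

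Finally I would remove the $M(r)$ on the right by a self-improvement step. The difficulty — and the only genuinely delicate point — is that the $r$-dependence of $M(r)$ on the right-hand side is exactly what we are trying to control, so one cannot simply let $r\to\infty$ in the last display. The clean way is to argue by contradiction: if $\abs{Df}\notin L^n(\R^n)$, then $M(r)\to\infty$ as $r\to\infty$, and for all large $r$ the inequality reads $\int_{B^n(0,r)}\abs{Df}^n\le 2C\,M(r)^{(n-1)/n}$. One then iterates this between dyadic scales, or more simply applies it with $2r$ in place of $r$ to get $M(r)\le M(2r)\le 2C\,M(4r)^{(n-1)/n}$ after adjusting the cutoff, and observes that a function $M$ satisfying $M(\rho)\le C' M(2\rho)^{(n-1)/n}$ on a sequence of radii $\rho\to\infty$ with $M(\rho)\to\infty$ forces $M(4r)^{1/n}$ to be bounded below by a positive multiple of $M(r)$ while being bounded above by a constant multiple of $M(2r)^{(n-1)/n}$, which is impossible unless $M$ is bounded. (Alternatively, and perhaps more transparently, one keeps the $\eta_r$ supported on $B^n(0,2r)$ but only claims $\int_{B^n(0,r)}\abs{Df}^n\le C(M(r)^{(n-1)/n}+1)$ with a uniform $C$, then uses that $M(r)/M(r)^{(n-1)/n}=M(r)^{1/n}\le C$ plus a lower-order term, forcing $\sup_r M(r)<\infty$.) Either route yields $\abs{Df}\in L^n(\R^n)$, completing the proof.
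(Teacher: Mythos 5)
Your setup — the cutoff $\eta_r$, the integration-by-parts (Caccioppoli-type) bound on $\int \eta_r^n J_f$, H\"older's inequality, and using $\norm{f}_\infty$ together with $\sigma\in L^n(\R^n)$ — is exactly the paper's approach, but your final ``self-improvement'' step has a genuine gap. After H\"older you discard the cutoff by enlarging $\int_{\spt\abs{\nabla\eta_r}}\eta_r^n\abs{Df}^n$ to $M(r)=\int_{B^n(0,2r)}\abs{Df}^n$, so what you actually establish is $N(r)\le C\bigl(N(2r)^{(n-1)/n}+1\bigr)$ with $N(r)=\int_{B^n(0,r)}\abs{Df}^n$, where the radii on the two sides do not match. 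This inequality does \emph{not} force $N$ to be bounded: since $2\cdot\frac{n-1}{n}\ge 1$ for $n\ge 2$, the unbounded function $N(r)=e^{r}$ satisfies $N(r)\le N(2r)^{(n-1)/n}$ for every $r$, so the dyadic iteration/contradiction you describe cannot close — the estimate controls the value at one scale by a \emph{sublinear power of the value at a larger scale}, which is the wrong direction to iterate. Your parenthetical alternative commits the same conflation: from $\int_{B^n(0,r)}\abs{Df}^n\le C(M(r)^{(n-1)/n}+1)$ one cannot divide to conclude $M(r)^{1/n}\le C$, because the left-hand side is not $M(r)$.

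The repair is the quantity you already had before enlarging: keep the weighted integral $X_r=\int_{\R^n}\eta_r^n\abs{Df}^n$ on \emph{both} sides. Indeed $\int_{\spt\abs{\nabla\eta_r}}\eta_r^n\abs{Df}^n\le X_r$, and since $\abs{\nabla\eta_r}\le 2/r$ while $m_n(B^n(0,2r))\le C_n r^n$, H\"older gives $X_r\le A\,X_r^{(n-1)/n}+B$ with $A=C(n)\norm{f}_\infty$ and $B=\norm{f}_\infty^n\norm{\sigma}_n^n$ independent of $r$. Because $X_r<\infty$ (as $f\in W^{1,n}_\loc$ and $\eta_r$ is compactly supported), this single inequality absorbs: $X_r\le\max\{(2A)^n,\,2B\}$ for every $r$, and monotone convergence as $r\to\infty$ yields $\abs{Df}\in L^n(\R^n)$. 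This is precisely the paper's proof via Lemma \ref{lem:caccioppoli}; no iteration over scales is needed.
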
 
\begin{proof}
	Let $\eta_r \colon \R^n \to [0,1]$ be a smooth mapping chosen such that $\eta\vert_{B^n(0, r)} \equiv 1$, $\eta \vert_{\R^n \setminus B^n(0, 2r)} \equiv 0$, and $\abs{\nabla \eta} \leq 2/r$. Now, by using the heterogeneous distortion inequality, the Caccioppoli estimate of Lemma \ref{lem:caccioppoli}, and H\"older's inequality, we obtain
	\begin{align*}
		\int_{\R^n} \eta_r^n \abs{Df}^{n}
		&\leq K\int_{\R^n} \eta_r^n J_f 
			+ \int_{\R^n} \eta_r^n \abs{f}^n \sigma^n\\
		&\leq K \int_{\R^n} (\eta_r \abs{Df})^{n-1} \abs{f} 
				\abs{\nabla \eta_r}
			+ \norm{f}_\infty^n \int_{\R^n} \eta_r^n \sigma^n\\
		&\leq  
			\left( \int_{B^n(0, 2r)} \abs{f}^n  \abs{\nabla \eta_r}^n 
				\right)^{\frac{1}{n}}
			\left( \int_{\R^n} \eta_r^n \abs{Df}^{n} \right)^{\frac{n-1}{n}}
			+ \norm{f}_\infty^n \norm{\sigma}_n^n\\
		&\leq 4\omega_n\norm{f}_\infty 
			\left( \int_{\R^n} \eta_r^n \abs{Df}^{n} \right)^{\frac{n-1}{n}}
			+ \norm{f}_\infty^n \norm{\sigma}_n^n
	\end{align*}
	Hence, we obtain an upper bound on the integral of $\eta^n_r \abs{Df}^n$ independent on $r$. Letting $r \to \infty$ yields the claim.
\end{proof}

\subsection{Level set methods}

We just proved that  for a bounded  $f \colon \R^n \to \R^n$ satisfying \eqref{eq:main_definition} with $\sigma \in L^n$, the differential $\abs{Df}$ lies in $L^n (\R^n)$. Therefore,  by Lemma \ref{lem:zero_Jacobian}, the integral of $J_f$ over the entire space $\R^n$ is zero. We now proceed to improve this by showing that the integral of the Jacobian also vanishes over every strict sublevel set of $\abs{f}$.

\begin{lemma}\label{lem:level_set_zero_Jacobian}
	Let $f \in W^{1, n}_\loc(\R^n, \R^n)$. Suppose that $\abs{Df} \in L^n(\R^n)$. Then for every $t > 0$, we have
	\[
		\int_{\left\{x \in \R^n \colon \abs{f} < t \right\}} J_f = 0.
	\]
\end{lemma}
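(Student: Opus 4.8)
The plan is to deduce the level-set statement from Lemma \ref{lem:zero_Jacobian} by composing $f$ with a suitable Lipschitz ``truncation'' of the radial coordinate in the target. Concretely, fix $t > 0$ and choose a smooth increasing function $\psi_t \colon [0, \infty) \to [0, \infty)$ with $\psi_t(s) = s$ for $s \leq t/2$, $\psi_t$ constant (equal to some value $\leq t$) for $s \geq t$, and $0 \leq \psi_t' \leq 1$. Define $g = g_t \colon \R^n \to \R^n$ by $g(x) = \psi_t(\abs{f(x)}) f(x)/\abs{f(x)}$ when $f(x) \neq 0$ and $g(x) = 0$ when $f(x) = 0$; equivalently $g = \Psi_t \circ f$ where $\Psi_t \colon \R^n \to \R^n$, $\Psi_t(y) = \psi_t(\abs{y}) y / \abs{y}$, is a globally Lipschitz, orientation-preserving map (radial, with $\Psi_t = \id$ near $0$ and $\Psi_t$ locally constant outside $B(0,t)$). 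Then $g \in W^{1,n}_\loc(\R^n, \R^n)$ by the chain rule for Lipschitz-after-Sobolev compositions, and $\abs{Dg(x)} \leq \mathrm{Lip}(\Psi_t) \abs{Df(x)}$, so $\abs{Dg} \in L^n(\R^n)$. By Lemma \ref{lem:zero_Jacobian} (with $N = \S^{n-1}$, or simply its $\R^n$-target case) we get $\int_{\R^n} J_g = 0$.

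Next I would compute $J_g$ pointwise. Since $\Psi_t$ is radial, on the set $\{0 < \abs{f} < t/2\}$ we have $\Psi_t = \id$ near the relevant values, hence $J_g = J_f$ there; on the set $\{\abs{f} > t\}$, $\Psi_t$ is locally constant, so $D g = 0$ a.e.\ and $J_g = 0$; on the annular set $\{t/2 \leq \abs{f} \leq t\}$ we have $0 \leq J_g \leq J_f$ (both nonnegative a.e., because $\abs{Df}^n \leq K J_f + \dots$ forces $J_f \geq 0$ wherever the distortion inequality is used — but here we only assumed $\abs{Df} \in L^n$, so instead I use that the radial map $\Psi_t$ with $0 \leq \psi_t' \leq \psi_t(s)/s$ or $\geq$ has Jacobian of controlled sign; cleaner: choose $\psi_t$ convex so that $\psi_t'(s) \geq 0$ and $\psi_t(s)/s \geq 0$, giving $J_{\Psi_t} \geq 0$, hence $J_g = J_{\Psi_t}(f)\,J_f$ has the same sign as $J_f$). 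The remaining issue is that on $\{\abs f < t\}$ we want $\int J_f$, not $\int J_g$, and they differ on the annulus. To kill the annulus contribution, I would not use a single $\psi_t$ but rather let $\psi_t$ depend on a parameter: for $\tau \in (0, t)$ take $\psi_{t,\tau}$ equal to the identity on $[0,\tau]$ and constant on $[t, \infty)$, apply the above to get $\int_{\{\abs f < \tau\}} J_f = - \int_{\{\tau \leq \abs f < t\}} J_g$, and then show the right side $\to 0$ as $\tau \to t^-$. Actually the slicker route is: apply the $g$-argument once to get $\int_{\R^n} J_g = 0$ with $\Psi_t$ affine-identity on $[0,t]$ is impossible (can't be both identity on all of $[0,t]$ and compactly-supported-derivative), so the honest statement is that for any Lipschitz radial $\Psi$ that is the identity near $0$ and eventually constant, $\int J_{\Psi\circ f} = 0$; letting $\Psi$ run through an approximation of $y \mapsto \min(\abs y, t) \cdot y/\abs y$ and using dominated convergence (dominating function $\abs{Df}^n \in L^1$) gives $\int_{\{\abs f < t\}} J_f + \int_{\{\abs f = t\}} 0 = 0$, i.e.\ the claim, where one also checks $J_{\Psi\circ f} = 0$ a.e.\ on $\{\abs f > t\}$ in the limit and that $m_n$ of the level set $\{\abs f = t\}$ contributes nothing because $J_f = 0$ a.e.\ on $\{\abs f = t\}$ (a.e.\ point of a level set of a Sobolev function is a point of approximate differentiability where the derivative is tangent to the level set, forcing $J_f = 0$).

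The main obstacle I expect is this last point — justifying the passage from the ``smooth truncation'' identity to the sharp sublevel-set identity, i.e.\ controlling what happens exactly at $\abs{f} = t$. The clean way to handle it is to prove the identity first for a.e.\ $t$ (those $t$ for which $m_n(\{\abs f = t\}) = 0$, which is all but countably many $t$ by a measure argument, and on which no boundary term can appear), using the dominated convergence argument above with dominating function $\abs{Df}^n$; and then to upgrade to every $t > 0$ by noting that $t \mapsto \int_{\{\abs f < t\}} J_f$ is continuous from the left (again by dominated convergence, since $\abs{Df}^n \in L^1$) while the right-hand side $0$ is constant, and the set of good $t$ is dense. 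Here one uses the standard fact that for $f \in W^{1,n}_\loc$ with $\abs{Df}\in L^n$ one has $J_f \in L^1_\loc$ and $J_f = 0$ a.e.\ on every set where $f$ is constant, together with the co-area–type observation that for a.e.\ $t$ the level set is $m_n$-null. I would present the argument in the order: (1) reduce to smooth radial truncations and invoke Lemma \ref{lem:zero_Jacobian}; (2) compute $J_{\Psi\circ f}$ and identify the $\Psi\to\min(\,\cdot\,,t)$ limit; (3) dominated convergence to get the identity for a.e.\ $t$; (4) left-continuity in $t$ to reach all $t$.
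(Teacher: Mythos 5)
Your proposal is correct and is essentially the paper's own argument: compose $f$ with smooth radial truncations of the target, invoke Lemma \ref{lem:zero_Jacobian} to get $\int_{\R^n} J_{\Psi \circ f} = 0$, and pass to the limit using dominated convergence with dominating function $\abs{J_f} \leq \abs{Df}^n \in L^1(\R^n)$. The only real difference is that the paper chooses each truncation $\psi_{t,\eps}$ to be constant on all of $[t,\infty)$ (identity only on $[0,t-\eps]$), so that $(J_{h_{t,\eps}} \circ f)\, J_f$ vanishes identically on $\{\abs{f} \geq t\}$ and converges pointwise to $\chi_{\{\abs{f} < t\}} J_f$ for \emph{every} $t>0$, which makes your auxiliary steps (the restriction to a.e.\ $t$, the left-continuity upgrade, and the observation that $J_f = 0$ a.e.\ on $\{\abs{f}=t\}$) unnecessary, and also renders harmless your slightly inaccurate remark that $\Psi_t$ is ``locally constant'' on $\{\abs{y}>t\}$ (it is not, but its Jacobian vanishes there, which is all that is used).
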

\begin{proof}
	Let $t> \eps > 0$, and let $\psi=\psi_{t, \eps} \colon [0, \infty) \to [0, \infty)$ be a non-decreasing smooth function such that $\psi \vert_{[0, t-\eps]} = \id$, $\psi \vert_{[t, \infty)} \equiv t$, and $\abs{\psi'} \leq 2$. Let $h_{t, \eps} \colon \R^n \to \R^n$ be the radial function defined by
	\[
		h_{t, \eps}(x) =  \psi_{t, \eps}(\abs{x}) \frac{ x}{\abs{x}} .
	\]
	Then $h_{t, \eps}$ is a smooth and 2-Lipschitz regular mapping. Consequently,  the chain rule applies, $Df_{t, \eps}(x) = Dh_{t, \eps}(f(x)) Df(x)$ for a.e.\ $x$, and $f_{t, \eps} = h_{t, \eps} \circ f$ lies in $W^{1, n}_\loc(\R^n, \R^n)$, see e.g.~\cite[p.130]{Evans-Gariepy-book}.
	
	In particular, since $\abs{Dh_{t, \eps}} \leq 2$, we have $Df_{t, \eps} \in L^n(\R^n)$. Therefore, Lemma \ref{lem:zero_Jacobian} yields that
	\[
		\int_{\R^n} (J_{h_{t, \eps}} \circ f) J_f
		= \int_{\R^n} J_{f_{t, \eps}} = 0.
	\]
	As $\eps \to 0$, we have $J_{h_{t, \eps}} \to \chi_{[0, t)}$ pointwise where  $\chi_E$ denotes the characteristic function $\chi_E$ of a set $E$.  Hence, the claim follows by letting $\eps \to 0$ and applying  the dominated convergence theorem for the Lebesgue integral. 
\end{proof}

Lemma \ref{lem:level_set_zero_Jacobian} is our main tool in showing that, for an entire non-identically zero solution $f$, the function $\abs{\nabla \log \abs{f}}$ belongs to  $L^n (\R^n)$. Towards this, we first prove that the function $\abs{Df}^n/\abs{f}^n$ is globally integrable.

\begin{lemma}\label{lem:log_gradient_int}
	Suppose that $f \in W^{1,n}_\loc(\R^n, \R^n)$ solves the heterogeneous distortion inequality \eqref{eq:main_definition} with $K \in [1, \infty)$ and $\sigma \in L^{n}(\R^n)$. If $f$ is  bounded, then $J_f/\abs{f}^n$ is integrable,
	\begin{align*}
		\int_{\R^n} \frac{J_f}{\abs{f}^n}
		&= 0,
		&&\text{and}&
		\int_{\R^n} \frac{\abs{Df}^n}{\abs{f}^n} &\leq
		\int_{\R^n} \sigma^n < \infty.
	\end{align*}
\end{lemma}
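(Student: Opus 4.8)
The plan is to combine the two ingredients already in place for a bounded entire solution: the global integrability $\abs{Df} \in L^n(\R^n)$ from Lemma~\ref{lem:Df_global_integrability}, and the vanishing of the Jacobian integral over every strict sublevel set of $\abs{f}$ from Lemma~\ref{lem:level_set_zero_Jacobian}. First I would record that $\abs{Df} \in L^n(\R^n)$ forces $J_f \in L^1(\R^n)$ via Hadamard's inequality $\abs{J_f} \le \abs{Df}^n$. The key claim is then
\[
	\int_{\{\abs{f} \ge \eps\}} \frac{J_f}{\abs{f}^n} = 0 \qquad \text{for every } \eps > 0 .
\]
On $\{\abs{f} \ge \eps\}$ the integrand is dominated by $\eps^{-n}\abs{Df}^n \in L^1(\R^n)$, so this integral is well defined, and I would use the layer-cake identity $\abs{f(x)}^{-n} = \int_\eps^\infty n t^{-n-1}\chi_{\{\abs{f}<t\}}(x)\,dt$, valid pointwise once $\abs{f(x)}\ge\eps$. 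Tonelli's theorem applies because the double integral of the absolute value of the integrand is bounded by $\eps^{-n}\norm{J_f}_{L^1}$, and interchanging the order of integration turns the claim into $\int_\eps^\infty n t^{-n-1}\bigl(\int_{\{\eps\le\abs{f}<t\}} J_f\bigr)\,dt$. For each $t>\eps$ the inner integral is $\int_{\{\abs{f}<t\}}J_f - \int_{\{\abs{f}<\eps\}}J_f = 0 - 0 = 0$ by Lemma~\ref{lem:level_set_zero_Jacobian}, which proves the claim.

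With this in hand, dividing the heterogeneous distortion inequality~\eqref{eq:main_definition} by $\abs{f}^n$ on $\{\abs{f}\ge\eps\}$ and integrating gives
\[
	\int_{\{\abs{f}\ge\eps\}} \frac{\abs{Df}^n}{\abs{f}^n}
	\le K \int_{\{\abs{f}\ge\eps\}} \frac{J_f}{\abs{f}^n} + \int_{\{\abs{f}\ge\eps\}} \sigma^n
	= \int_{\{\abs{f}\ge\eps\}} \sigma^n \le \int_{\R^n}\sigma^n
\]
for every $\eps>0$. Letting $\eps\to0$ and using monotone convergence, together with the standard fact that $\nabla f_i = 0$ a.e.\ on $\{f_i = 0\}$ (so the zero set $\{f=0\}$ contributes nothing), yields $\int_{\R^n}\abs{Df}^n/\abs{f}^n \le \int_{\R^n}\sigma^n < \infty$. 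In particular $J_f/\abs{f}^n \in L^1(\R^n)$, so dominated convergence lets me pass $\eps\to0$ in the key claim to conclude $\int_{\R^n} J_f/\abs{f}^n = 0$; this also re-derives the energy estimate from~\eqref{eq:main_definition}.

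The one genuinely delicate point is that $J_f/\abs{f}^n$ is \emph{not} a priori in $L^1(\R^n)$: the argument must first be run on the truncated regions $\{\abs{f}\ge\eps\}$, where all quantities are absolutely integrable, and only afterwards may one pass to the limit — one cannot write $\int_{\R^n} J_f/\abs{f}^n = 0$ before the energy bound is established. The remaining technicalities (tracking the cutoff at level $\eps$ through the layer-cake/Fubini step and verifying the Tonelli hypothesis, and the appeal to $\nabla f_i = 0$ a.e.\ on $\{f_i = 0\}$) are routine. Boundedness of $f$ enters only through Lemma~\ref{lem:Df_global_integrability}; no degree theory or PDE input is used.
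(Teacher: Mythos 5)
Your proof is correct and follows essentially the same route as the paper: both arguments rest on Lemma \ref{lem:Df_global_integrability}, on the vanishing of $\int_{\{\abs{f}<t\}} J_f$ from Lemma \ref{lem:level_set_zero_Jacobian}, and on the layer-cake/Fubini integration against $t^{-n-1}\,\dd t$, followed by dividing \eqref{eq:main_definition} by $\abs{f}^n$. The only difference is the device used to justify the interchange of integrals: you truncate the domain to $\{\abs{f}\ge\eps\}$, where $\abs{J_f}/\abs{f}^n \le \eps^{-n}\abs{J_f} \in L^1(\R^n)$, and pass to the limit $\eps \to 0$ at the end, whereas the paper splits $J_f = J_f^+ - J_f^-$ and uses the pointwise bound $J_f^- \le K^{-1}\abs{f}^n\sigma^n$ coming from \eqref{eq:main_definition} to apply Tonelli to each nonnegative part on all of $\R^n$ directly.
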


Here and in what follows, we interpret $J_f/\abs{f}^n = 0$ when $J_f = 0$, and similarly, $\abs{Df}^n/\abs{f}^n = 0$ when $\abs{Df} = 0$.

\begin{proof}
	We split $J_f$ into its positive and negative parts $J_f = J_f^+ - J_f^-$. These parts satisfy the inequality
	\begin{equation}\label{eq:hdirestatement}
		\abs{Df}^n + KJ_f^- \leq K J_f^+ + \sigma^n \abs{f}^n.
	\end{equation}
	In particular, we have 
	\begin{equation}\label{eq:J_festimate}
		J_f^- \leq K^{-1} \abs{f}^n \sigma^n.
	\end{equation}
	Indeed, this is trivial when $J_f \ge 0$, and if $J_f <0$, then~\eqref{eq:J_festimate} follows from~\eqref{eq:hdirestatement}. 
	Hence, $J_f^- = 0$ a.e.\ where $\abs{f} = 0$, and we have
	\begin{equation}\label{eq:J_f-integrable}
	\int_{\R^n} \frac{J_f^-}{\abs{f}^n} 
	\leq \frac{1}{K} \int_{\R^n} \sigma^n < \infty \, . 
	\end{equation}
	
	For every $t>0$,  we denote the strict sublevel set of $\abs{f}$ at $t$ by $L_t= \left\{x \in \R^n \colon \abs{f} <t\right\}$.
	By Lemmas \ref{lem:Df_global_integrability} and \ref{lem:level_set_zero_Jacobian}, we have
	\[
		\int_{L_t} J_f = 0 \qquad \textnormal{ for every } t > 0 \, . 
	\]
	In particular, we have for every $t > 0$ that
	\[
		\int_{L_t} J_f^+
		= \int_{L_t} J_f^- \, .
	\]
	Multiplying  this estimate by $t^{-n-1}/n$, we obtain
	\begin{equation}\label{eq:the_ineq_you_integrate}
		\int_{\R^n} \frac{t^{-n-1}}{n} J_f^+ 
	 		\chi_{\left\{x \in \R^n \colon \abs{f} < t \right\}}
	 	= \int_{\R^n} 
	 		\frac{t^{-n-1}}{n} J_f^-
	 		\chi_{\left\{x \in \R^n \colon \abs{f} < t \right\}}.
	\end{equation}
	By integrating \eqref{eq:the_ineq_you_integrate} over $(0, \infty)$ with respect to $t$ and applying the Fubini--Tonelli theorem to change the order of integration, we have
	\[
		\int_{\R^n} J_f^+ 
			\int_{\abs{f(x)}}^{\infty} \frac{t^{-n-1}}{n} \dd t \dd x
		= \int_{\R^n} J_f^-
			\int_{\abs{f(x)}}^{\infty} \frac{t^{-n-1}}{n} \dd t \dd x.
	\]
	Evaluating the inner integral yields
	\[
		\int_{\R^n} \frac{J_f^+}{\abs{f}^n}
		= \int_{\R^n} \frac{J_f^-}{\abs{f}^n}
		< \infty
	\]
	where the finiteness of the integrals follows from~\eqref{eq:J_f-integrable}.
	
	We therefore conclude that $J_f^+ = 0$ a.e. where $\abs{f} = 0$, that $J_f^+/\abs{f}^n$ and therefore also $J_f/\abs{f}^n$ are integrable, and that
	\[
		\int_{\R^n} \frac{J_f}{\abs{f}^n} = 0.
	\]
	Finally, by the heterogeneous distortion inequality \eqref{eq:main_definition}, we see that $\abs{Df} = 0$ a.e.\ where $\abs{f} = 0$, and that
	\[
		\int_{\R^n} \frac{\abs{Df}^n}{\abs{f}^n}
		\leq K{\int_{\R^n} \frac{J_f}{\abs{f}^n}}
		+ \int_{\R^n} \sigma^n
		= \int_{\R^n} \sigma^n.
	\]
\end{proof}

\subsection{The logarithm}

With the global integrability of $\abs{Df}^n/\abs{f}^n$ shown, we now proceed to study the Sobolev regularity of $\log \abs{f}$. 

Let $B_R=B^n(x,R)$ be a ball in $\R^n$ with $R>0$. Suppose that $f \in L^\infty(B_R, \R^n) \cap W_{\loc}^{1,n}(B_R, \R^n)$, and that $\abs{Df}/\abs{f} \in L^n(B_R)$.  For every $\lambda > 0$, we denote by $\abs{f}_\lambda$ the function $x \mapsto \max(\abs{f}, \lambda)$. We then proceed to study the functions $ \log \abs{f}_\lambda$. Since $f$ is bounded and the function $h_\lambda \colon \R^n \to \R$ given by $h_\lambda (x) = \log \max(\abs{x}, \lambda )$ is locally Lipschitz, we may use the chain rule of Lipschitz and Sobolev maps to obtain that $\log \abs{f}_\lambda = h_\lambda \circ f \in W^{1,n}_\loc(B_R)$; see e.g.~\cite[Theorem 2.1.11]{Ziemer-book}. Moreover, we have the uniform estimate
\begin{equation}\label{eq:trunc_log_unif_bound}
	\abs{\nabla \log \abs{f}_\lambda}^n 
	= \frac{\abs{\nabla \abs{f}}^n}{\abs{f}^n} 
		\chi_{\left\{ \abs{f} > \lambda \right\}}
	\leq \frac{\abs{Df}^n}{\abs{f}_\lambda^n}
	\leq \frac{\abs{Df}^n}{\abs{f}^n}
	< \infty
\end{equation}
which is independent of $\lambda$.

By using these truncated logarithms as a tool, we achieve the following result.

\begin{lemma}\label{lem:log_f_is_Sobolev}
Let  $f \colon B_R \to \R^n$ be a bounded and not identically zero mapping.	Suppose that $f \in W^{1,n}_\loc(B_R, \R^n)$  and that $\abs{Df}/\abs{f} \in L^n(B_R).$   Then $f^{-1}\{0\}$ has zero Lebesgue measure, the measurable function $\log \abs{f}$ lies in $W^{1,n}_\loc(B_R)$, and
	\[
		\abs{\nabla \log \abs{f}} \leq \frac{\abs{Df}}{\abs{f}} \in L^n(B_R).
	\]
\end{lemma}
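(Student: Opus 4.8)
The plan is to work with the truncated logarithms $\log\abs{f}_\lambda = h_\lambda\circ f$ introduced just before the statement, establish uniform bounds on them as $\lambda\to 0$, extract a weak limit, and identify that limit with $\log\abs{f}$ while controlling the bad set $f^{-1}\{0\}$. First I would fix a ball $B'=B^n(x,\rho)$ compactly contained in $B_R$; it suffices to prove the conclusion on each such $B'$ by exhaustion. On $B'$ we know from \eqref{eq:trunc_log_unif_bound} that $\abs{\nabla\log\abs{f}_\lambda}\le\abs{Df}/\abs{f}\in L^n(B_R)$ uniformly in $\lambda$, so the gradients $\nabla\log\abs{f}_\lambda$ are bounded in $L^n(B')$. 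To get a handle on the functions themselves, I would pick a point or small ball where $\abs{f}$ is bounded below — this exists since $f$ is not identically zero and, being $W^{1,n}_\loc$, has a well-defined (say, by Lebesgue points) representative which is positive on a set of positive measure; combined with the gradient bound and the Sobolev–Poincaré inequality, this pins down $\log\abs{f}_\lambda$ in $L^n_{\loc}$ uniformly in $\lambda$. Hence $\{\log\abs{f}_\lambda\}_{\lambda\in(0,1)}$ is bounded in $W^{1,n}(B')$.

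Next I would let $\lambda\to 0$ along a sequence. By reflexivity of $W^{1,n}(B')$ we may pass to a weakly convergent subsequence $\log\abs{f}_{\lambda_j}\rightharpoonup u$ in $W^{1,n}(B')$, and after a further subsequence, $\log\abs{f}_{\lambda_j}\to u$ pointwise a.e.\ and in $L^n(B')$. On the set $\{\abs{f}>0\}$ we have $\log\abs{f}_\lambda=\log\abs{f}$ for all $\lambda<\abs{f}$, so pointwise a.e.\ on $\{\abs{f}>0\}$ the sequence stabilizes at $\log\abs{f}$; thus $u=\log\abs{f}$ a.e.\ on $\{\abs{f}>0\}$. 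The crucial remaining issue is the set $Z=f^{-1}\{0\}\cap B'$: on $Z$ we have $\log\abs{f}_{\lambda_j}=\log\lambda_j\to-\infty$, so pointwise convergence to a finite limit $u$ forces $m_n(Z)=0$. This is exactly the first assertion of the lemma, and it is what I expect to be the main obstacle — everything else is soft functional analysis, but ruling out $m_n(Z)>0$ genuinely uses the hypothesis $\abs{Df}/\abs{f}\in L^n$. An alternative, more self-contained route to $m_n(Z)=0$: if $m_n(Z)>0$, then on $Z$ one has $\nabla\abs{f}=0$ a.e.\ (derivative of a Sobolev function vanishes a.e.\ on a level set), yet the $L^n$ bound $\int_{B'}\abs{\nabla\abs{f}}^n/\abs{f}^n<\infty$ would still be finite, so this by itself is not a contradiction; the contradiction must instead come from the pointwise blow-up of $\log\lambda_j$ against a finite weak-$W^{1,n}$ limit, so I would route the argument through the weak limit as above rather than trying to argue on $Z$ directly.

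Finally, with $m_n(Z)=0$ established, $u=\log\abs{f}$ holds a.e.\ on all of $B'$, so $\log\abs{f}\in W^{1,n}(B')$, and since $B'$ was an arbitrary ball compactly contained in $B_R$ we get $\log\abs{f}\in W^{1,n}_\loc(B_R)$. For the gradient estimate, I would identify $\nabla\log\abs{f}$ as the weak limit of $\nabla\log\abs{f}_{\lambda_j}$; by weak lower semicontinuity of the $L^n$ norm together with \eqref{eq:trunc_log_unif_bound}, $\int_{B'}\abs{\nabla\log\abs{f}}^n\le\liminf_j\int_{B'}\abs{\nabla\log\abs{f}_{\lambda_j}}^n\le\int_{B'}\abs{Df}^n/\abs{f}^n$. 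To upgrade this to the pointwise bound $\abs{\nabla\log\abs{f}}\le\abs{Df}/\abs{f}$ a.e., I would use that $\nabla\log\abs{f}_{\lambda_j}=(\nabla\abs{f}/\abs{f}_{\lambda_j})\chi_{\{\abs{f}>\lambda_j\}}$ converges pointwise a.e.\ on $\{\abs{f}>0\}$ to $\nabla\abs{f}/\abs{f}$ (hence this is the precise formula for $\nabla\log\abs{f}$ a.e., the weak limit agreeing with the a.e.\ pointwise limit by a standard argument), and then $\abs{\nabla\abs{f}}\le\abs{Df}$ pointwise gives the claim, with the convention $\nabla\log\abs{f}=0$ a.e.\ on $Z$ since $m_n(Z)=0$.
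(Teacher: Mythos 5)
Your strategy is essentially the one the paper uses: truncate the logarithm, combine the uniform gradient bound \eqref{eq:trunc_log_unif_bound} with a Sobolev--Poincar\'e estimate and a positive-measure set on which $\abs{f}$ is bounded away from zero to control the truncations themselves, and then play the blow-up of $\log\lambda_j$ on $f^{-1}\{0\}$ against an a.e.\ finite limit to get $m_n(f^{-1}\{0\})=0$ and to identify $\nabla\log\abs{f}=\nabla\abs{f}/\abs{f}$. The paper packages the limit passage through an $L^1$ bound obtained by contradiction and then monotone/dominated convergence, rather than through reflexivity and weak compactness in $W^{1,n}$, but that difference is cosmetic.

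The one real wrinkle is your localization. You reduce to an arbitrary ball $B'\Subset B_R$ and then, on $B'$, invoke a positive-measure set where $\abs{f}\ge t$; but the hypothesis $f\not\equiv 0$ only provides such a set somewhere in $B_R$, and a priori $f$ could vanish a.e.\ on the particular $B'$ you fixed (excluding this is part of what the lemma asserts). For such a ball the uniform control of the means $(\log\abs{f}_\lambda)_{B'}$ --- and hence the uniform $W^{1,n}(B')$ bound and everything downstream --- is not available as written. The fix is immediate: either run the whole argument on $B_R$ itself, as the paper does (the hypotheses are global: $f$ is bounded and $\abs{Df}/\abs{f}\in L^n(B_R)$, so each $\log\abs{f}_\lambda$ lies in $W^{1,n}(B_R)$, being trapped between $\log\lambda$ and $\log\max(\norm{f}_\infty,\lambda)$ with gradient dominated by $\abs{Df}/\abs{f}$, and Poincar\'e applies on the ball $B_R$), or exhaust $B_R$ by concentric balls $B^n(x_0,\rho_j)$ with $\rho_j\to R$, which for large $j$ contain a fixed positive-measure subset of $\{\abs{f}>t\}$. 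Relatedly, you do not need ``a point or small ball where $\abs{f}$ is bounded below'' (which would presuppose continuity); the positive-measure set $\{\abs{f}\ge t\}$, which exists because $f\not\equiv 0$, is all your Poincar\'e argument actually uses. With these adjustments your proposal is correct.
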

\begin{proof}
	By our assumptions, the set $\abs{f}^{-1} (0, \infty)$ has positive measure. Hence, there exists $t \in (0, 1)$ such that $F_t = \left\{ x \in B_R : t^{-1} > \abs{f(x)} > t \right\}$ has positive measure. For every $\lambda > 0$, we denote by $f_\lambda \colon B_R \to \R$ the function $f_\lambda = \log \abs{f}_\lambda$.
	
	Our first goal is to show that $\log \abs{f} \in L^1(B_R)$. For the proof, we assume towards a contradiction that the integral of $\abs{\log \abs{f}}$ over $B_R$ is instead infinite. In this case, since the functions $f_\lambda$ are uniformly bounded from above and decrease to $\log \abs{f}$ monotonically as $\lambda \to 0+$, we have $\lim_{\lambda \to 0+} (f_\lambda)_{B_R} = -\infty$; recall that $(f_\lambda)_{B_R}$ stands for the integral average value of the function $f_\lambda$ over $B_R$.
	
	By the Sobolev-Poincar\'e inequality and \eqref{eq:trunc_log_unif_bound}, we have the upper bound
	\[
		\frac{1}{m_n(B_{R})} \int_{B_{R}} \abs{f_\lambda - (f_\lambda)_{B_{R}}}
		\leq C_n \left( \int_{B_{R}} \abs{\nabla f_\lambda}^n \right)^\frac{1}{n}
		\leq C_n \left( \int_{B_R} \frac{\abs{Df}^n}{\abs{f}^n} \right)^\frac{1}{n}.
	\]
	This upper bound,  independent of $\lambda$,  is finite by our assumptions.  We also have the lower bound
	\[
		\frac{1}{m_n(B_{R})} \int_{B_{R}} \abs{f_\lambda - (f_\lambda)_{B_{R}}}
		\geq \frac{m_n(F_t)}{m_n(B_{R})} \left( \abs{(f_\lambda)_{B_{R}}}  - \log t^{-1}\right).
	\]
	Since $\lim_{\lambda \to 0+} (f_\lambda)_B = -\infty$, we arrive at a contradiction. Hence, $\log \abs{f} \in L^1(B_R)$. In particular, it follows that $\log \abs{f}$ is finite almost everywhere, and therefore $f^{-1}\{0\}$ has zero Lebesgue measure.
	
	Now, for $\lambda < 1$, we have
	\[
		\int_{B_{R}} \abs{\log \abs{f} - f_\lambda}
		\leq \int_{f^{-1} [0, \lambda)} \abs{\log \abs{f}} \to 0 \qquad \textnormal{when } \lambda \to 0+ \, ,
		\]
		and
	\[
		\int_{B_{R}} \abs{\nabla f_\lambda - \frac{\nabla \abs{f}}{\abs{f}}}^n
		\leq \int_{f^{-1} [0, \lambda]} \frac{\abs{Df}^n}{\abs{f}^n}
		 \to 0 \qquad \textnormal{when } \lambda \to 0+ \, . 
	\]
	Therefore, $f_\lambda \to \log \abs{f}$ in $L^1 (B_R)$ and  $\nabla f_\lambda \to (\nabla \abs{f})/\abs{f}$ in $L^n(B_R)$.  Thus, the weak gradient of $\log \abs{f}$ equals  $(\nabla \abs{f})/\abs{f}$. Since $(\nabla \abs{f})/\abs{f} \in L^n(B_R, \R^n)$, the Sobolev embedding theorem shows that $\log \abs{f} \in L^{n}_\loc(B_R)$, and hence $\log \abs{f} \in W^{1,n}_\loc(B_R)$.
\end{proof}

\section{Non-existence of zeroes}

In this section  we will  show that $\log \abs{f}$ is locally H\"older continuous if $f$ is a bounded entire solution  to the heterogeneous distortion inequality with $\sigma \in L^n(\R^n) \cap L^{n+\eps}_{\loc} (\R^n)$. This will prove Theorem \ref{thm:nonzero}.
 Our approach again  mimics the lines of reasoning by Morrey and  is based on obtaining a quantitative integral estimate for $\abs{Df}^n/\abs{f}^n$ over balls. This is done by employing a suitable isoperimetric inequality.  
 \subsection{Logarithmic isoperimetric inequality} We then proceed to show the following isoperimetric-type estimate for $\abs{Df}^n/\abs{f}^n$. As before, we use $B_r = B^n(x, r)$ to denote a ball in $\R^n$ around a fixed point $x$.
 
\begin{lemma}\label{lem:logisoperimetric}
Let $f \in L^\infty_\loc(B_R, \R^n) \cap W_{\loc }^{1,n} (B_R, \R^n)$, where $R > 0$. If $\abs{Df}/\abs{f} \in L_{\loc}^n (B_R)$, then there is a constant $C_n$ such that for a.e. $r\in (0,R)$, we have
\begin{equation}\label{eq:log_isoperimetric_n}
\int_{B_r} \frac{J_f}{\abs{f}^n} \le C_n \, r  \int_{\partial B_r} \frac{\abs{Df}^{n}}{\abs{f}^{n} } \, .  
\end{equation}

\end{lemma}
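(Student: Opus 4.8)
The plan is to prove \eqref{eq:log_isoperimetric_n} by integration by parts over the sphere $\partial B_r$: I will realize $J_f/\abs{f}^n$ as the weak exterior differential of an explicit $(n-1)$-form built from $f$ and the solid-angle form, and then estimate the resulting boundary integral using H\"older's inequality together with the Poincar\'e inequality on $\partial B_r$. After discarding the trivial case where $f$ vanishes almost everywhere on the relevant ball, Lemma~\ref{lem:log_f_is_Sobolev}, applied on suitable concentric subballs, shows that $f^{-1}\{0\}$ is Lebesgue null, that $u := \log\abs{f} \in W^{1,n}_\loc$ there, and that $\abs{\nabla u} \le \abs{Df}/\abs{f} \in L^n_\loc$. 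Let $\vartheta$ be the solid-angle $(n-1)$-form on $\R^n \setminus \{0\}$, i.e.\ the pullback of $\vol_{\S^{n-1}}$ under $y \mapsto y/\abs{y}$; it is closed, satisfies $\abs{\vartheta_y} = \abs{y}^{-(n-1)}$, and obeys $d(\log\abs{y}\cdot\vartheta) = \abs{y}^{-n}\vol_n$. Writing $\alpha := f^*\vartheta$, one has the pointwise bound $\abs{\alpha_x} \le (\abs{Df(x)}/\abs{f(x)})^{n-1}$, so $\alpha \in L^{n/(n-1)}_\loc(\wedge^{n-1}B_R)$.

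The heart of the matter, which I expect to be the main obstacle, is the weak identity $d\bigl((u-c)\alpha\bigr) = (J_f/\abs{f}^n)\,\vol_n$, valid for every constant $c$. Formally this is $f^*\bigl(d((\log\abs{y}-c)\vartheta)\bigr) = f^*(\abs{y}^{-n}\vol_n)$, but since $\vartheta$ is neither smooth across $0$ nor compactly supported and $u$ is unbounded near $f^{-1}\{0\}$, it must be obtained by approximation. The device is to multiply $(\log\abs{y}-c)\vartheta$ by a \emph{logarithmic cutoff} $\chi_\lambda \in C^\infty(\R^n,[0,1])$ which vanishes on $\{\abs{y}<\lambda^2\}$, equals $1$ on $\{\abs{y}>\lambda\}$, and satisfies $\abs{d\chi_\lambda(y)} \le C(\abs{y}\log(1/\lambda))^{-1}$ (plus a harmless cutoff at infinity, $f$ being locally bounded), and to apply Lemma~\ref{lem:diff_form_Sobolev_pullback} to these smooth, compactly supported forms. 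That lemma gives that $d\bigl(f^*(\chi_\lambda(\log\abs{y}-c)\vartheta)\bigr)$ equals $f^*\bigl(d\chi_\lambda\wedge(\log\abs{y}-c)\vartheta\bigr) + \chi_\lambda(\abs{f})(J_f/\abs{f}^n)\vol_n$; on $\spt d\chi_\lambda \subset \{\lambda^2 \le \abs{y} \le \lambda\}$ we have $\abs{\log\abs{y}} \le 2\log(1/\lambda)$, so the $\log(1/\lambda)^{-1}$ gain in $\abs{d\chi_\lambda}$ cancels the size of the coefficient $\log\abs{y}-c$, and the first term is dominated pointwise by $C_n(\abs{Df}/\abs{f})^n\chi_{\{\lambda^2 \le \abs{f} \le \lambda\}} \to 0$ in $L^1_\loc$. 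Since also $\chi_\lambda(\abs{f})(J_f/\abs{f}^n) \to J_f/\abs{f}^n$ in $L^1_\loc$ and $f^*(\chi_\lambda(\log\abs{y}-c)\vartheta) \to (u-c)\alpha$ in $L^1_\loc$ — the latter dominated by $\abs{u-c}(\abs{Df}/\abs{f})^{n-1} \in L^1_\loc$ (H\"older, using $u \in L^n_\loc$ and $(\abs{Df}/\abs{f})^{n-1} \in L^{n/(n-1)}_\loc$) — letting $\lambda \to 0$ gives the identity. A naive $[\lambda,2\lambda]$ cutoff would instead leave a $\log(1/\lambda)$ factor in the error term which need not be absorbable, so the logarithmic cutoff is the essential point here.

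Finally, for a.e.\ $r \in (0,R)$ the restriction $u\vert_{\partial B_r}$ lies in $W^{1,n}(\partial B_r)$ with tangential gradient at most $\abs{Df}/\abs{f}$, the form $\alpha$ restricts to an $L^1$-form on $\partial B_r$, and $\int_{\partial B_r}\abs{Df}^n/\abs{f}^n < \infty$; for such $r$, Stokes' theorem for Sobolev forms (obtained by mollification from the smooth case, exactly as in the standard proof of \eqref{eq:isoperimetric}) gives $\int_{B_r}(J_f/\abs{f}^n) = \int_{\partial B_r}(u-c)\alpha$. Taking $c$ to be the mean value of $u$ over $\partial B_r$ and using $\abs{\alpha} \le (\abs{Df}/\abs{f})^{n-1}$, H\"older's inequality on $\partial B_r$ with exponents $n$ and $n/(n-1)$, and the Poincar\'e inequality on the $(n-1)$-sphere $\partial B_r$ (whose constant scales linearly in $r$) together with $\abs{\nabla_{\partial B_r}u} \le \abs{Df}/\abs{f}$, we obtain
\[
	\Bigl\lvert \int_{B_r} \frac{J_f}{\abs{f}^n} \Bigr\rvert
	\le C_n \Bigl( \int_{\partial B_r} \abs{u-c}^n \Bigr)^{1/n}
		\Bigl( \int_{\partial B_r} \frac{\abs{Df}^n}{\abs{f}^n} \Bigr)^{(n-1)/n}
	\le C_n\, r \int_{\partial B_r} \frac{\abs{Df}^n}{\abs{f}^n},
\]
which is \eqref{eq:log_isoperimetric_n} (in fact with an absolute value on the left-hand side).
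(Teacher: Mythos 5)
Your argument is correct, and although it implements the very identity the paper advertises as its guiding idea ($\frac{J_f}{\abs{f}^n}\vol_n = d\omega$ plus Stokes), the actual route is genuinely different from the paper's. The paper never restricts the singular solid-angle form to spheres: it proves an integration-by-parts identity for integrands $\eta\,[n\Psi(\abs{f}^2)+2\abs{f}^2\Psi'(\abs{f}^2)]J_f$ with compactly supported test functions (Lemma \ref{lem:Onninen_Zhong_lemma_variation}), applies it with the two-parameter truncation $\Psi_{a,\eps}$ of $t^{-n/2}(\tfrac12\log(t+\eps)-c)$ and interior cutoffs $\eta_j$ shrinking to $\chi_{B_r}$, passes to the limits $j\to\infty$, $a\to 0$, $\eps\to 0$, and then bounds the resulting boundary term by the sup-norm oscillation of $\log\abs{f}$ on $\partial B_r$ via the Sobolev embedding theorem on spheres, the factor $r$ arising as $r^{1/n}\cdot r^{(n-1)/n}$ after H\"older. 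You regularize in the target instead: you pull back the smooth, compactly supported forms $\chi_\lambda(\log\abs{y}-c)\vartheta$ through Lemma \ref{lem:diff_form_Sobolev_pullback}, and your logarithmic cutoff, with $\abs{d\chi_\lambda}\lesssim(\abs{y}\log(1/\lambda))^{-1}$, is precisely what cancels the $\log(1/\lambda)$ size of the coefficient so the error term vanishes by dominated convergence; you then need an a.e.-sphere Stokes formula for non-smooth forms (obtained by mollification) and close with the $L^n$ Poincar\'e inequality on $\partial B_r$, whose constant scales linearly in $r$. The paper's route buys freedom from restricting Sobolev differential forms to spheres and from any Stokes theorem for non-smooth forms (all regularization happens at the level of the scalar function $\Psi$ and interior cutoffs); your route buys a more transparent geometric derivation of the key identity and yields the estimate with an absolute value on the left, at the cost of slightly heavier form machinery. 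Two small points to tighten: first, your constant $c$ is the mean of $\log\abs{f}$ over $\partial B_r$ and hence depends on $r$, while the a.e.-$r$ Stokes identity is established for each fixed $c$; either argue as the paper does with a countable dense set of values of $c$, or note that your identity for two distinct constants gives $d(f^*\vartheta)=0$ weakly, so applying the a.e.-$r$ Stokes formula to the two fixed forms $(\log\abs{f})\,f^*\vartheta$ and $f^*\vartheta$ gives the statement for all $c$ simultaneously. Second, Lemma \ref{lem:log_f_is_Sobolev} should be applied on concentric subballs compactly contained in $B_R$ (where $f$ is genuinely bounded), yielding the claim for a.e.\ $r<R'$ for each $R'<R$, after which one exhausts $R'\uparrow R$; this is cosmetic and matches how the paper itself uses that lemma.
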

The main idea behind the proof is to write
\[  \frac{J_f(x)\,}{\abs{f(x)}^n} \vol_n  = \dd \omega,\]
where $\omega$ is a certain differential $(n-1)$-form, and then to use Stokes' theorem. This method actually gives similar estimates for integrals of the more general form $\psi (\abs{f}) J_f $ over  balls. The precise estimate obtained is given by the following lemma, which is a variant of \cite[Lemma 2.1]{Onninen-Zhong_MFD-loglog-proof} by Onninen and Zhong. We provide a proof here due to our assumptions being slightly weaker than in~\cite{Onninen-Zhong_MFD-loglog-proof}.

\begin{lemma}\label{lem:Onninen_Zhong_lemma_variation}
	Let $\Omega \subset \R^n$ be a domain. Suppose that $f \colon \Omega \to \R^n$ is in $L^\infty_\loc(\Omega, \R^n ) \cap W^{1,n}_\loc(\Omega, \R^n)$.  If $\Psi \colon [0, \infty) \to \R$ is a piecewise $C^1$-smooth function with $\Psi'$ locally bounded,  then for every test function $\eta \in C^\infty_0(\Omega)$, we have
	\[
		\abs{\int_{\Omega}  \eta \bigl[n \Psi(|f|^2) +
			 2 \abs{f}^2 \Psi'(\abs{f}^2)\bigr]J_f}
		\leq \sqrt{n} \int_{\Omega}  \abs{\nabla \eta} \abs{f} \Psi(\abs{f}^2)\abs{Df}^{n-1}.
	\]
\end{lemma}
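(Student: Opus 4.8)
The plan is to recognize the integrand on the left as the pullback under $f$ of an exact $n$-form on the target $\R^n$ and then integrate by parts. Write $\vol_{\R^n} = dy_1 \wedge \dots \wedge dy_n$ for the standard volume form on the target, and for a $C^1$ function $\Phi \colon [0,\infty) \to \R$ consider the $(n-1)$-form
\[
	\beta_\Phi = \Phi(\abs{y}^2) \sum_{i=1}^n (-1)^{i-1} y_i\, dy_1 \wedge \dots \wedge \widehat{dy_i} \wedge \dots \wedge dy_n
\]
on $\R^n$, the hat denoting omission of the $i$-th factor. A one-line Leibniz computation, in which only the $dy_i$-term of $d(\abs{y}^2)$ survives each wedge product, gives
\[
	d\beta_\Phi = \bigl( n\Phi(\abs{y}^2) + 2\abs{y}^2\Phi'(\abs{y}^2) \bigr)\vol_{\R^n}.
\]
Thus $f^* d\beta_\Phi$, identified with a function via $\vol_n$, is exactly the integrand $\bigl(n\Phi(\abs{f}^2) + 2\abs{f}^2\Phi'(\abs{f}^2)\bigr) J_f$, while $f^*\beta_\Phi$ is the $(n-1)$-form that will appear after integration by parts.

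Fix $\eta \in C^\infty_0(\Omega)$, pick a bounded domain $\Omega'$ with $\spt\eta \Subset \Omega' \Subset \Omega$, and set $M = \norm{f}_{L^\infty(\Omega')}$. I first treat the case $\Phi = \Psi \in C^\infty([0,\infty))$. Fix a cutoff $\chi \in C^\infty_0(\R^n)$ with $\chi \equiv 1$ near $\overline{B^n(0,M)}$; since $f(\Omega') \subset \overline{B^n(0,M)}$, replacing $\beta_\Psi$ by $\chi\beta_\Psi \in C^\infty_0(\wedge^{n-1}\R^n)$ changes neither $f^*\beta_\Psi$ nor $f^* d\beta_\Psi$ on $\Omega'$. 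Applying Lemma~\ref{lem:diff_form_Sobolev_pullback} with $k = n-1$ and $p = n \geq k+1$, we obtain $\omega := f^*(\chi\beta_\Psi) \in W^{d, n/(n-1), 1}_\loc(\wedge^{n-1}\Omega')$ with
\[
	d\omega = f^* d(\chi\beta_\Psi) = \bigl(n\Psi(\abs{f}^2) + 2\abs{f}^2\Psi'(\abs{f}^2)\bigr) J_f\, \vol_n \quad\text{on } \Omega'.
\]
Feeding the $0$-form $\eta$ into the definition of the weak differential and using $\omega \wedge d\eta = (-1)^{n-1} d\eta \wedge \omega$, this gives
\[
	\int_\Omega \eta \bigl(n\Psi(\abs{f}^2) + 2\abs{f}^2\Psi'(\abs{f}^2)\bigr) J_f = -\int_\Omega d\eta \wedge \omega.
\]
Now expand $d\eta \wedge \omega = \Psi(\abs{f}^2)\sum_i (-1)^{i-1} f_i\, d\eta \wedge df_1 \wedge \dots \wedge \widehat{df_i} \wedge \dots \wedge df_n$; each summand equals $\det\bigl[\nabla\eta \mid \nabla f_1 \mid \dots \mid \widehat{\nabla f_i} \mid \dots \mid \nabla f_n\bigr]\vol_n$, whose absolute value is at most $\abs{\nabla\eta}\abs{Df}^{n-1}$ by Hadamard's inequality. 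Combined with $\sum_i\abs{f_i} \leq \sqrt n\,\abs{f}$ this yields the pointwise bound $\abs{d\eta \wedge \omega} \leq \sqrt n\,\abs{\nabla\eta}\abs{f}\abs{\Psi(\abs{f}^2)}\abs{Df}^{n-1}$, and hence the claim for smooth $\Psi$.

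It remains to pass from smooth $\Psi$ to piecewise $C^1$ $\Psi$, and this is the main obstacle. Let $a_1 < a_2 < \dots$ be the (locally finite) corner points of $\Psi$ in $[0,\infty)$ and choose $\Psi_m \in C^\infty([0,\infty))$, e.g.\ by mollification, with $\Psi_m \to \Psi$ uniformly on $[0,M^2]$, with $\Psi_m' \to \Psi'$ pointwise on $[0,M^2]\setminus\{a_1, a_2, \dots\}$, and with $\sup_m\norm{\Psi_m'}_{L^\infty([0,M^2])} < \infty$. Apply the smooth case to each $\Psi_m$ and let $m \to \infty$. On the right-hand side $\Psi_m(\abs{f}^2) \to \Psi(\abs{f}^2)$ uniformly on $\Omega'$, and $\abs{\nabla\eta}\abs{f}\abs{Df}^{n-1} \in L^1(\spt\eta)$, so the bound passes to the limit. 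On the left-hand side the brackets $n\Psi_m(\abs{f}^2) + 2\abs{f}^2\Psi_m'(\abs{f}^2)$ are uniformly bounded on $\Omega'$ (in terms of $M$ and $\sup_m\norm{\Psi_m'}_{L^\infty([0,M^2])}$) and converge pointwise a.e.\ off $\bigcup_j\{\abs{f}^2 = a_j\}$, while $J_f \in L^1(\spt\eta)$; since $J_f = 0$ a.e.\ on $\bigcup_j\{\abs{f}^2 = a_j\}$ — on $\{f = 0\}$ because $Df = 0$ a.e.\ there, and on each $\{\abs{f}^2 = a_j\} \cap \{f \neq 0\}$ because $\nabla\abs{f}^2 = 2(Df)^T f = 0$ a.e.\ there forces $Df$ to be singular — dominated convergence also applies to the left-hand side. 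Taking $m \to \infty$ gives the lemma. One could instead establish the smooth case by mollifying $f$ and invoking Stokes' theorem, with $J_{f_m} \to J_f$ in $L^1_\loc$ justifying the limit; the same corner-set argument would still be needed for the piecewise $C^1$ step.
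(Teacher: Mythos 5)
Your proof is correct, and it takes a somewhat different route to the same integration-by-parts identity. The paper argues coordinate-wise: it forms the auxiliary maps $F_i = (f_1,\dots,f_{i-1},\eta\,\Psi(\abs{f}^2)f_i,f_{i+1},\dots,f_n)$, verifies via the Lipschitz--Sobolev chain and product rules that $F_i \in W^{1,n}$ with one compactly supported coordinate, so that $\int_\Omega J_{F_i}=0$, then expands $J_{F_i}\vol_n$ as a wedge product and sums over $i$; since the chain rule applies directly to the Lipschitz function $\Psi$ restricted to $[0,\norm{f}^2_{L^\infty}]$, no smoothing of $\Psi$ is needed there. You instead isolate the null-Lagrangian structure in the single exact form $\beta_\Psi$ with $d\beta_\Psi=\bigl(n\Psi(\abs{y}^2)+2\abs{y}^2\Psi'(\abs{y}^2)\bigr)\vol_{\R^n}$, pull it back using Lemma~\ref{lem:diff_form_Sobolev_pullback} (hence the target cutoff $\chi$ and the need for smooth $\Psi$ at this stage), and integrate by parts against $\eta$ via the definition of the weak differential; the sign bookkeeping, the Hadamard bound, and $\sum_i\abs{f_i}\le\sqrt n\,\abs f$ are all in order. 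The price of this cleaner structural step is the extra approximation in $\Psi$, and you handle it correctly: uniform bounds of $\Psi_m,\Psi_m'$ on $[0,M^2]$, domination by $C\abs{J_f}\in L^1(\spt\eta)$, and the key observation that $J_f=0$ a.e.\ on each level set $\{\abs{f}^2=a_j\}$ (the same fact the paper records to make its left-hand integrand well defined) justify dominated convergence. In short, the paper's proof buys a direct treatment of piecewise $C^1$ data through the Lipschitz chain rule, while yours buys conceptual economy (one exact $(n-1)$-form instead of $n$ auxiliary maps, reusing the pullback lemma) at the cost of a mollification step. One cosmetic point: like the paper's own argument, your estimate naturally yields $\abs{\Psi(\abs{f}^2)}$ on the right-hand side; the statement's $\Psi(\abs{f}^2)$ without absolute value should be read that way, as indeed the paper does when applying the lemma.
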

Before jumping into the proof, we comment on the well-definedness of the left integrand. The function $\Psi'$ is defined outside finitely many jump points $y \in [0, \infty)$. Consider the set $A_y = \{x \in \Omega : \abs{f(x)}^2 = y\}$. Then a.e.\ on $A_y$, we have $\nabla \abs{f}^2 = 0$; see e.g.\ \cite[Corollary 1.21]{Heinonen-Kilpelainen-Martio_book}. Since $\nabla \abs{f}^2 = \sum_{i=1}^n 2f_i \nabla f_i$, this implies that for a.e.\ $x \in A_y$, we have $\abs{f(x)} = 0$ or some non-zero linear combination of $\{(df_i)_x : i = 1, \dots, n \}$ vanishes. In the latter case, $J_f(x) = 0$. Hence, almost everywhere where $\Psi'(\abs{f}^2)$ is not defined, we have $\abs{f} = 0$ or $J_f = 0$, making the integrand in the statement well defined.
\begin{proof}[Proof of Lemma~\ref{lem:Onninen_Zhong_lemma_variation}]
	By switching to a smaller domain $\Omega$ which still contains the support of $\eta$, we may assume that $\Omega$ is bounded and $f \in W^{1,n}(\Omega, \R^n) \cap L^\infty(\Omega, \R^n)$. By boundedness of $\abs{f}$ and the local boundedness of $\Psi$ and $\Psi'$, we may also assume that $\Psi$ and $\Psi'$ are bounded. 
	
	We consider the function
	\[
		F_i = (f_1, \dots, f_{i-1}, \eta \Psi(\abs{f}^2)f_i, 
			f_{i+1}, \dots, f_n).
	\]
	Since $\Psi'$ is bounded and $\abs{f}^2$ is Sobolev, the chain rule of Lipschitz and Sobolev maps yields that $\nabla(\Psi(\abs{f}^2)) = \Psi'(\abs{f}^2) \nabla \abs{f}^2 = 2 \Psi'(\abs{f}^2) \sum_{j=1}^n f_j \nabla f_j$ a.e.\ on $\Omega$, see e.g.~\cite[Theorem 2.1.11]{Ziemer-book}. By further using the product rules of Sobolev mappings, we see that $\eta \Psi(\abs{f}^2)f_i$ has a locally integrable weak gradient given by 
	\begin{multline*}
		\nabla(\eta \Psi(\abs{f}^2)f_i)\\
		= \Psi(\abs{f}^2)f_i \nabla \eta 
			+ 2 \eta \Psi'(\abs{f}^2) f_i 
				\left(\sum_{j=1}^n f_j \nabla f_j\right)
			+ \eta \Psi(\abs{f}^2)\nabla f_i.
	\end{multline*}
	Using the fact that $\eta$, $\Psi(\abs{f}^2)$, $\Psi'(\abs{f}^2)$ and $f_i$ are bounded, we then conclude that this weak gradient is in $L^n(\Omega, \R^n)$.
	
	We therefore have that $\eta^n \Psi(\abs{f}^2)f_i \in W^{1, n}(\Omega)$. Consequently, $F_i \in W^{1, n}(\Omega)$, and therefore $J_{F_i}$ is integrable. Since $F_i$ also has a compactly supported coordinate function, we therefore have
	\[
		\int_{\Omega} J_{F_i} = 0.
	\]
	By writing $J_{F_i} \vol_n$ as a wedge product, we obtain
	\begin{multline*}
		\int_{\Omega}  \eta \bigl[\Psi(|f|^2) +
			2 f_i^2 \Psi'(\abs{f}^2)\bigr]J_f \vol_n\\
		= -\int_{\Omega}  \Psi(\abs{f}^2) f_i df_1 \wedge \dots \wedge df_{i-1} \wedge d\eta \wedge df_{i+1} \wedge \dots \wedge df_n.
	\end{multline*}
	By summing over $i$, and by using the fact that $\abs{\alpha_1 \wedge \dots \wedge \alpha_n} \leq \abs{\alpha_1} \cdots \abs{\alpha_n}$ for 1-forms $\alpha_1, \dots, \alpha_n$, the claim follows.
\end{proof}

With the proof of Lemma \ref{lem:Onninen_Zhong_lemma_variation} complete, we then proceed to prove Lemma \ref{lem:logisoperimetric}.
\begin{proof}[Proof of Lemma~\ref{lem:logisoperimetric}]
	We first prove an isoperimetric estimate of the following form: for a.e. $r\in (0,R)$ and every constant $c$, we have
	\begin{equation}\label{eq:isoperimetric_apu}
		\int_{B_r} \frac{J_f}{\abs{f}^n} \le C_n \int_{\partial B_r} \frac{\abs{Df}^{n-1}}{\abs{f}^{n-1}} \bigl|\log \abs{f} - c\bigr| \, . 
	\end{equation}
	
	Hence, fix a $c \in \R$ and let $r \in (0, R)$. For all sufficiently large $j \in \N$, we select cutoff functions $\eta_j \in C^\infty_0 (B_r)$ such that $\eta_j \leq \eta_{j+1} \le 1$, $\eta_j (x)=1$ for all $x\in B_{r-1/j}$, and $\sup \{ \abs{\nabla \eta_j (x)} \colon x \in B_r \} \le 1/j$. We also fix $a > 0$ and $\eps \in (0, 1)$, and define a function $\Psi_{a, \eps} \colon [0, \infty) \to \R$ by
	\[
		\Psi_{a, \eps}(t) = \begin{cases}
			t^{-\frac{n}{2}}\left(\frac{1}{2}\log (t+\eps) - c\right), 	
				&t \geq a^2\\
			a^{-n}(\log \sqrt{a^2+\eps} - c), 
				&t \leq a^2
		\end{cases}.
	\]
	The function $\Psi_{a, \eps}$ is piecewise $C^1$ and its derivative is locally bounded. Moreover, we have
	\[
		n \Psi_{a, \eps}(t^2) + 2 t^2 \Psi_{a, \eps}'(t^2)
		= \begin{cases}
			t^{-(n-2)}(t^2 + \eps)^{-1} & t > a\\
			n a^n (\log \sqrt{a^2+\eps} - c), & t < a.\\
		\end{cases}
	\]
	
	Hence, by using Lemma \ref{lem:Onninen_Zhong_lemma_variation} with $\Psi=\Psi_{a, \eps}$ and $\eta=\eta_j$, we obtain that
	\begin{align*}
		\abs{\int_{\left\{\abs{f} > a \right\}}
			\frac{\eta_j J_f}{\abs{f}^{n-2}(\abs{f}^2 + \eps)}}
		& \leq \frac{C_n}{j} \int^r_{r-\frac{1}{j}} \int_{\partial B_s} \frac{ \abs{Df}^{n-1} \abs{f}}{\abs{f}_a^{n}}
						\abs{\log \sqrt{\abs{f}_a^2 + \eps} - c} \, d s\\
		&\qquad+\abs{\int_{\left\{\abs{f} < a \right\}}
			\frac{n\eta_j J_f(\log \sqrt{a^2 + \eps} - c)}{a^n}},
			\end{align*}
	where we recall that $\abs{f}_a= \max(\abs{f}, a)$. We then let $j \to \infty$, where we use monotone convergence and the Lebesgue differentiation theorem to obtain
	\begin{multline*}
		\abs{\int_{\left\{\abs{f} > a \right\}  \cap B_r }
			\frac{J_f}{\abs{f}^{n-2}(\abs{f}^2 + \eps)}}
		\leq C_n  \int_{\partial B_r} \frac{ \abs{Df}^{n-1} \abs{f}}{\abs{f}_a^{n}}
						\abs{\log \sqrt{\abs{f}_a^2 + \eps} - c} \\
		+  \abs{\int_{\left\{\abs{f} < a \right\} \cap B_r }
			\frac{n J_f(\log \sqrt{a^2 + \eps} - c)}{a^n}}
	\end{multline*}
	 for a.e. $r\in (0,R)$. Combining this with Hadamard's inequality $\abs{J_f} \leq \abs{Df}^n$ yields
	\begin{align}
		\nonumber
		\abs{\int_{\left\{\abs{f} > a \right\}  \cap B_r }
			\frac{J_f}{\abs{f}^{n-2}(\abs{f}^2 + \eps)}}
		&\leq C_n  \int_{\partial B_r} \frac{ \abs{Df}^{n-1} \abs{f}}{\abs{f}_a^{n}}
						\abs{\log \sqrt{\abs{f}_a^2 + \eps} - c}  \\
		\label{eq:isoperimetric_apu_apu}
		&\quad+ n \abs{\log \sqrt{a^2 + \eps} - c} {\int_{\left\{\abs{f} < a \right\} \cap B_r }
			\frac{\abs{Df}^n}{\abs{f}^n } } \, . 
	\end{align}
	for a.e. $r\in (0,R)$.
	
	Next, we let $a \to 0+$. Since $\abs{Df}^n/\abs{f}^n$ is integrable and $f^{-1}\{0\}$ has   zero measure by Lemma \ref{lem:log_f_is_Sobolev}, the last integral in~\eqref{eq:isoperimetric_apu_apu} goes to zero as $a \to 0+$. For the first integral on the right hand side of~\ref{eq:isoperimetric_apu_apu}, we observe that its integrand is dominated by the function  $(\abs{Df}^{n-1}/\abs{f}^{n-1}) (\log\abs{f} + C)$ for some $C>0$. This dominant is in $L^1_\loc(B_R)$ for any $C > 0$, since $\abs{Df}^{n-1}/\abs{f}^{n-1} \in L^{n/(n-1)}(B_R)$, and since $\log \abs{f} \in L^n_\loc(B_R)$ by Lemma \ref{lem:log_f_is_Sobolev}. Consequently, the dominant is also in $L^1(\partial B_r)$ for a.e.\ $r \in (0, R)$ by the Fubini-Tonelli theorem. Hence, we may apply the dominated convergence theorem as $a \to 0+$ in \eqref{eq:isoperimetric_apu_apu}, and therefore obtain
	\[ \abs{\int_{B_r }
			\frac{J_f}{\abs{f}^{n-2}(\abs{f}^2 + \eps)}}
		 \leq C_n  \int_{\partial B_r} \frac{ \abs{Df}^{n-1} }{\abs{f}^{n-1}}
						\abs{\log \sqrt{\abs{f}^2 + \eps} - c} \, 
	\]
	for a.e.\ $r \in (0, R)$.
	
	We then let $\eps \to 0^+$ and again use  the dominated convergence theorem, obtaining the claimed inequality~\eqref{eq:isoperimetric_apu} for a.e.\ $r \in (0, R)$ and our fixed value of $c$. Consequently, if $S \subset \R$ is a countable dense subset, then~\eqref{eq:isoperimetric_apu} holds for a.e.\ $r \in (0, R)$ and all $c \in S$. We then obtain~\eqref{eq:isoperimetric_apu} for all $c \in \R$ and a.e.\ $r \in (0, R)$ by taking limits, since the constant in \eqref{eq:isoperimetric_apu} is independent of $c$ and since $\abs{Df}^{n-1} / \abs{f}^{n-1}$ is integrable over $\partial B_r$ for a.e.\ $r \in (0, R)$.

	It remains to derive the statement of the lemma from~\eqref{eq:isoperimetric_apu}. We denote 
	\[
		\osc (\log \abs{f}, \partial B_r) 
		= \underset{\partial B_r}{\sup} \log \abs{f} 
			- \underset{\partial B_r}{\inf} \log \abs{f}\, .  
	\]
	Since $\abs{\nabla \log \abs{f}} \in L^n (B_R)$, the \emph{Sobolev embedding theorem on spheres}~\cite[Lemma 2.19]{Hencl-Koskela-book} implies that, after changing $f$ in a set of measure zero, we have
	\begin{equation}\label{eq:sob_emb_on_spheres}
		\osc(\log \abs{f}, \partial B_r)  
		\le C_n r^\frac{1}{n} \norm{\nabla \log \abs{f}}_n
		\le C_n r^\frac{1}{n} 
			\left(  \int_{\partial B_r}  
				\frac{\abs{Df}^n}{\abs{f}^n} \right)^\frac{1}{n } 
	\end{equation}
	for a.e.\ $r \in (0, R)$. Moreover, if $r \in (0, R)$ is such that~\eqref{eq:isoperimetric_apu} is valid, we may select $b \in \partial B_r$ and take $c = \log \abs{f(b)}$, in which case~\eqref{eq:isoperimetric_apu} yields
	\begin{equation}\label{eq:isoperimetric_osc}
		\int_{B_r} \frac{J_f}{\abs{f}^n} 
		\le C_n  \osc(\log \abs{f}, \partial B_r)   
		\int_{\partial B_r} \frac{\abs{Df}^{n-1}}{\abs{f}^{n-1}}  \, . 
	\end{equation}
	for a.e.\ $r \in (0, R)$.
	
	Now, combining \eqref{eq:sob_emb_on_spheres}, \eqref{eq:isoperimetric_osc} and H\"older's inequality, we obtain 
	\[
		\begin{split}
		\int_{B_r} \frac{J_f}{\abs{f}^n} & \le C_n   r^\frac{1}{n} \left(  \int_{\partial B_r}  \frac{\abs{Df}^n}{\abs{f}^n} \right)^\frac{1}{n }   \int_{\partial B_r} \frac{\abs{Df}^{n-1}}{\abs{f}^{n-1}} \\
		& \le  C_n   r^\frac{1}{n} \left(  \int_{\partial B_r}  \frac{\abs{Df}^n}{\abs{f}^n} \right)^\frac{1}{n }   r^\frac{n-1}{n} \left(  \int_{\partial B_r}  \frac{\abs{Df}^n}{\abs{f}^n} \right)^\frac{n-1}{n } \\
		& = C_n  r  \int_{\partial B_r}  \frac{\abs{Df}^n}{\abs{f}^n}  \, . 
		\end{split}
	\]
	This concludes the proof of Lemma~\ref{lem:logisoperimetric}.
	\end{proof}
\subsection{H\"older continuity of the logarithmic function}
Here we  complete the proof of Theorem \ref{thm:nonzero}. We recall the statement first.

\begin{customthm}{\ref{thm:nonzero}}
	Suppose that $f \in W^{1,n}_\loc(\R^n, \R^n)$ satisfies the heterogeneous distortion inequality \eqref{eq:main_definition} with $K \in [1, \infty)$ and $\sigma \in L^{n}(\R^n) \cap L^{n+\eps}_\loc(\R^n)$ where $\eps > 0$. If $f$ is bounded and  $f \not \equiv 0$, then $0 \notin f(\R^n)$.
\end{customthm}	
The proof is based on the following logarithmic counterpart of Lemma \ref{lem:local_integral_ineq}, where the use of the isoperimetric inequality is replaced with Lemma \ref{lem:logisoperimetric}.

\begin{lemma}\label{lem:log_gradient_local_estimate}
	Suppose that $f \colon \R^n \to \R^n$ is in $W^{1,n}_\loc(\R^n, \R^n)$ and solves the heterogeneous distortion inequality \eqref{eq:main_definition} with $K \in [1, \infty)$, and $\sigma \in L^n(\R^n)$. If $f$ is  bounded and not the constant function $f \equiv 0$, then for every $x \in \R^n$ and almost every ball $B_r = B^n(x,r) \subset \R^n$, we have
	\[
	\int_{B_r} \frac{\abs{Df}^n}{\abs{f}^n}
	\leq C_n(K) r \int_{\partial B_r} \frac{\abs{Df}^n}{\abs{f}^n}
	+ \int_{B_r} \sigma^n.
	\]
\end{lemma}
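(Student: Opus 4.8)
The plan is to combine the global facts about bounded entire solutions proved earlier in the text with the logarithmic isoperimetric inequality of Lemma~\ref{lem:logisoperimetric}, mirroring the way Lemma~\ref{lem:local_integral_ineq} was deduced from the classical isoperimetric inequality~\eqref{eq:isoperimetric}. First I would check that the hypotheses of the relevant lemmas are in place: since $f$ is bounded and solves \eqref{eq:main_definition} with $\sigma \in L^n(\R^n)$, Lemma~\ref{lem:Df_global_integrability} gives $\abs{Df} \in L^n(\R^n)$, and then Lemma~\ref{lem:log_gradient_int} gives $\abs{Df}/\abs{f} \in L^n(\R^n)$; moreover, since $f \not\equiv 0$, Lemma~\ref{lem:log_f_is_Sobolev} (applied on any ball) shows that $f^{-1}\{0\}$ is a Lebesgue null set. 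In particular, $f$ satisfies the hypotheses of Lemma~\ref{lem:logisoperimetric} on every ball $B^n(x,\rho)$.

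The first step is to rewrite \eqref{eq:main_definition} in logarithmic form. Because $f^{-1}\{0\}$ is null, the integrals over $B_r = B^n(x,r)$ of $\abs{Df}^n/\abs{f}^n$ and of $J_f/\abs{f}^n$ coincide with the corresponding integrals over $B_r \setminus f^{-1}\{0\}$, and on that set $\abs{f} > 0$, so dividing \eqref{eq:main_definition} by $\abs{f}^n$ yields
\[
	\frac{\abs{Df}^n}{\abs{f}^n} \leq K\,\frac{J_f}{\abs{f}^n} + \sigma^n \qquad \text{a.e.\ in } B_r.
\]
The second step is to integrate this over $B_r$ and to control $\int_{B_r} J_f/\abs{f}^n$ via Lemma~\ref{lem:logisoperimetric}. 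Applying that lemma on $B^n(x,\rho)$ for an arbitrary $\rho > 0$ and then letting $\rho \to \infty$, we obtain, for a.e.\ $r > 0$,
\[
	\int_{B_r} \frac{J_f}{\abs{f}^n} \leq C_n\, r \int_{\partial B_r} \frac{\abs{Df}^n}{\abs{f}^n}.
\]
Since $K \geq 1$ and the right-hand side above is nonnegative, this gives $K\int_{B_r} J_f/\abs{f}^n \leq K C_n\, r \int_{\partial B_r} \abs{Df}^n/\abs{f}^n$ (trivially so on those radii where $\int_{B_r} J_f/\abs{f}^n \leq 0$), and integrating the displayed logarithmic distortion inequality over $B_r$ yields
\[
	\int_{B_r} \frac{\abs{Df}^n}{\abs{f}^n} \leq K C_n\, r \int_{\partial B_r} \frac{\abs{Df}^n}{\abs{f}^n} + \int_{B_r} \sigma^n,
\]
which is the claim with $C_n(K) = K C_n$.

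Given that Lemma~\ref{lem:logisoperimetric} is already available, I do not expect a genuine obstacle here; the argument is a short assembly of previously established results. The only points requiring care are bookkeeping in nature: verifying that the hypotheses of Lemmas~\ref{lem:Df_global_integrability}, \ref{lem:log_gradient_int}, \ref{lem:log_f_is_Sobolev} and~\ref{lem:logisoperimetric} are genuinely in force (the last three on an arbitrary ball rather than on all of $\R^n$), and ensuring that the null set of exceptional radii coming from Lemma~\ref{lem:logisoperimetric} and from the appearance of the spherical integrals $\int_{\partial B_r}$ is handled consistently. The conceptual work --- in particular the form identity $J_f\abs{f}^{-n}\vol_n = \dd\omega$ underlying the logarithmic isoperimetric estimate --- has already been carried out in the proof of Lemma~\ref{lem:logisoperimetric}.
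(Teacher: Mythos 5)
Your argument is correct and is essentially the paper's own proof: both divide the distortion inequality by $\abs{f}^n$ (justified since $m_n(f^{-1}\{0\})=0$ via Lemmas~\ref{lem:Df_global_integrability}, \ref{lem:log_gradient_int} and \ref{lem:log_f_is_Sobolev}) and then invoke Lemma~\ref{lem:logisoperimetric} to control the Jacobian term. The paper states this more tersely, but the underlying steps, hypotheses checked, and resulting constant $C_n(K)=KC_n$ are the same.
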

\begin{proof}
By Lemmas \eqref{lem:Df_global_integrability} and Lemma~\ref{lem:log_gradient_int}, we have $\abs{Df}/\abs{f} \in L^n(\R^n)$. Hence, the heterogeneous distortion inequality is in this case equivalent with the inequality
\begin{equation}\label{eq:heterogeneous_dist_divided}
	\frac{\abs{Df(x)}^n}{\abs{f(x)}^n} \le K \frac{J_f(x)}{\abs{f(x)}^n} + \sigma^n(x) \, \qquad \textnormal{for a.e. } x \in \R^n, 
\end{equation}
since $m_n (f^{-1} \{0\}) =0$ by Lemma~\ref{lem:log_f_is_Sobolev}. We then combine \eqref{eq:heterogeneous_dist_divided} with Lemma \ref{lem:logisoperimetric}, and the claim follows.
\end{proof}	

\begin{proof}[Proof of Theorem~\ref{thm:nonzero}]
	Let $B_R= B^n(0,R)$ for some $R > 0$. By Lemmas~\ref{lem:log_gradient_int} and \ref{lem:log_f_is_Sobolev}, we have that $\log \abs{f} \in W^{1,n}(\mathbb B_R)$, and we moreover have $\abs{\nabla \log \abs{f}} \leq \abs{Df}/\abs{f}$ almost everywhere.
	
	Let then $B_r = B^n(x, r) \subset B_R$. By Lemma \ref{lem:log_gradient_local_estimate} and H\"older's inequality, we have
	\begin{multline*}
		\int_{B_r} \frac{\abs{Df}^n}{\abs{f}^n}
		\leq C_n(K) r \int_{\partial B_r} \frac{\abs{Df}^n}{\abs{f}^n}
		+ \int_{Q_r} \sigma^n\\
		\leq C_n(K) r \int_{\partial B_r} \frac{\abs{Df}^n}{\abs{f}^n}
			+ (2r)^\frac{n\eps}{n + \eps} \left(
				\int_{\mathbb B_R} \sigma^{n+\eps} \right)^\frac{n}{n+\eps}.
	\end{multline*}
	Since this holds for every $x \in B_R$ and a.e.\ $r \in (0, \dist (x, \partial \mathbb B ))$, Lemma \ref{lem:diff_ineq_solution_r} yields that 
	\[
		\int_{B_r} \abs{\nabla \log \abs{f}}^n
		\leq \int_{B_r} \frac{\abs{Df}^n}{\abs{f}^n}
		\leq C r^\alpha,
	\]
	where $C$ and $\alpha$ are independent of our choice of $B_r \subset B_R$. Hence, by Lemma \ref{lem:Holder_continuity_general_log}, we have that $\log \abs{f}$ is H\"older continuous in $B_{R/4}$. Therefore,  the function $\log \abs{f}$ locally H\"older continuous in $\R^n$. In particular, $\log \abs{f}$ is locally bounded. However, if $f(x) = 0$ for some $x \in \R^n$, then $\log \abs{f(x)} = -\infty$. We conclude that $f$ cannot have any zeroes.
\end{proof}

\section{The Liouville theorem}

The remaining part of this paper  is devoted to proving the Liouville theorem formulated in Theorem \ref{thm:Liouville}.

We recall from the introduction that our approach is to consider a function ``$\log f$'' from $\R^n$ to $\R \times \S^{n-1}$. This mapping is well defined and satisfies a similar distortion inequality as the classical complex logarithm map. The differential inequality makes it possible to show that the weak derivative of our ``$\log f$'' lies in $L^{n-\eps}(\R^n)$ for some $\eps>0$. The argument for this goes back to two remarkable papers by Iwaniec and Martin~\cite{Iwaniec-Martin_Acta} (for even dimensions) and Iwaniec~\cite{Iwaniec_Annals} (for all dimensions), where they proved local integral estimates of quasiregular mappings below the natural exponent $n$. Later,  a  short  proof was given by  Faraco and Zhong~\cite{Faraco-Zhong_Caccioppoli}. We in turn perform a global version of the Lipschitz truncation argument of Faraco and Zhong in our setting.

\subsection{The logarithm with a manifold target}

Let $n \geq 2$. Then there exists a smooth mapping $s \colon \R \times \S^{n-1} \to \R^n \setminus \{0\}$, defined by
\[
	s(t, \theta) = e^t \theta
\]
for $t \in \R$ and $\theta \in \S^{n-1} \subset \R^n$. The map $s$ is conformal, with $\abs{Ds(t, \theta)}^n = J_s(t, \theta) = e^{nt}$. The inverse of $s$ is given by
\[
	s^{-1}(x) = \left(\log \abs{x}, \frac{x}{\abs{x}}\right)
\]
for $x \in \R^n \setminus \{0\}$. A simple calculation yields that $1 = (J_s \circ s^{-1}) J_{s^{-1}} = e^{n\log \abs{x}}J_{s^{-1}}$, and therefore
\[
	\abs{Ds^{-1}(x)}^n = J_{s^{-1}}(x) = \frac{1}{\abs{x}^n}.
\]

We use the inverse $s^{-1}$ to take a ``logarithm'' of our mapping $f$.

\begin{lemma}\label{lem:mfld_valued_log_props}
	Suppose that $f \in W^{1,n}_\loc(\R^n, \R^n)$ satisfies the heterogeneous distortion inequality \eqref{eq:main_definition} with $K \in [1, \infty)$ and $\sigma \in L^{n}(\R^n) \cap L^{n+\eps}_\loc(\R^n)$, for some $\eps > 0$. Suppose also that $f$ is bounded, and that $f$ is not the constant mapping $f \equiv 0$. Denote
	\[
		h = s^{-1}\circ f(x) = \left(\log \abs{f}, \frac{f}{\abs{f}}\right).
	\]
	Then $h$ has the following properties:
	\begin{enumerate}
		\item \label{enum:h_prop_sobolev} $h$ is continuous and $h \in W^{1,n}_\loc(\R^n, \R \times \S^{n-1})$;
		\item \label{enum:h_prop_globalint} we have $\abs{Dh} \in L^n(\R^n)$;
		\item \label{enum:h_prop_distortion} we have
		\[
			\abs{Dh(x)}^n \leq K J_h(x) + \sigma^n(x)
		\]
		for a.e.  $x\in \R^n$.
	\end{enumerate}
\end{lemma}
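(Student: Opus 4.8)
The plan is to realize $h$ as the composition $s^{-1}\circ f$ of $f$ with the smooth conformal diffeomorphism $s^{-1}\colon\R^n\setminus\{0\}\to\R\times\S^{n-1}$, and to transport each of the three asserted properties of $h$ from a corresponding property of $f$ through this composition. First I would record the two facts about $f$ that drive everything. Since $\sigma\in L^{n+\eps}_\loc(\R^n)$, Lemma~\ref{lem:Holder_continuity} applied on balls shows that $f$ is continuous; since $f$ is bounded and $f\not\equiv 0$, Theorem~\ref{thm:nonzero} gives $0\notin f(\R^n)$, so $f$ is in fact a continuous map $\R^n\to\R^n\setminus\{0\}$. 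Moreover, Lemma~\ref{lem:log_gradient_int} gives $\abs{Df}/\abs{f}\in L^n(\R^n)$. In particular, $h=s^{-1}\circ f$ is continuous, which is half of property~\ref{enum:h_prop_sobolev}.

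For the Sobolev regularity and an explicit chain rule I would argue locally. Given $x_0\in\R^n$, continuity of $f$ together with $0\notin f(\R^n)$ yields a ball $U\ni x_0$ whose image $f(\overline{U})$ lies in an open spherical shell $V'=\{y\in\R^n : a<\abs{y}<b\}$ with $0<a<b$. On $V'$ the derivative of $s^{-1}$ has norm $1/\abs{\cdot}$, bounded both above and away from $0$, so $\varphi:=s^{-1}|_{V'}\colon V'\to\R\times\S^{n-1}$ is a smooth bilipschitz chart with $hU\subset\varphi(V')$ and $\varphi^{-1}\circ h=f|_U\in W^{1,n}(U,\R^n)$; by definition this shows $h\in W^{1,n}_\loc(\R^n,\R\times\S^{n-1})$. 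The defining relation for the weak derivative of manifold-valued maps then gives $Df=D(\varphi^{-1})\circ Dh$ on $U$, and since $\varphi^{-1}$ is the restriction of $s$, which inverts $s^{-1}$, this rearranges to
\[
	Dh(x)=Ds^{-1}(f(x))\circ Df(x)\qquad\text{for a.e.\ }x\in U;
\]
as such balls $U$ cover $\R^n$, the identity holds for a.e.\ $x\in\R^n$.

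The remaining two items are then quick consequences of this identity. Because $s^{-1}$ is conformal with $\abs{Ds^{-1}(y)}=1/\abs{y}$, the linear map $Ds^{-1}(f(x))$ scales every vector by the factor $1/\abs{f(x)}$, so $\abs{Dh(x)}=\abs{Df(x)}/\abs{f(x)}$ a.e.; integrating and invoking $\abs{Df}/\abs{f}\in L^n(\R^n)$ gives property~\ref{enum:h_prop_globalint}. Likewise, the Jacobian chain rule $J_h=(J_{s^{-1}}\circ f)\,J_f$ together with $J_{s^{-1}}(y)=\abs{y}^{-n}$ yields $J_h(x)=J_f(x)/\abs{f(x)}^n$ a.e., and dividing the heterogeneous distortion inequality~\eqref{eq:main_definition} by $\abs{f(x)}^n>0$ then gives $\abs{Dh(x)}^n\le KJ_h(x)+\sigma^n(x)$ a.e., which is property~\ref{enum:h_prop_distortion}.

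The step I expect to require the most care is the local one: checking that the restriction of $s^{-1}$ to a spherical shell genuinely qualifies as a smooth bilipschitz chart for $\R\times\S^{n-1}$ in the sense of our definition of manifold-valued Sobolev maps, and matching $\varphi^{-1}\circ h$ with $f|_U$ precisely enough that the abstract weak-derivative identity applies. Everything after that is routine bookkeeping with the conformality of $s$ and the integrability bound $\abs{Df}/\abs{f}\in L^n(\R^n)$ that is already in hand.
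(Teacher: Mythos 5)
Your proposal is correct and follows essentially the same route as the paper: continuity and nonvanishing of $f$ from Theorem \ref{thm:Holder} and Theorem \ref{thm:nonzero}, the local bilipschitz chart given by restricting $s^{-1}$ to a region away from the origin to get $h \in W^{1,n}_\loc(\R^n, \R \times \S^{n-1})$, and the conformality identities $\abs{Dh} = \abs{Df}/\abs{f}$, $J_h = J_f/\abs{f}^n$ combined with Lemma \ref{lem:log_gradient_int} for properties \eqref{enum:h_prop_globalint} and \eqref{enum:h_prop_distortion}. The extra care you flag about verifying the chart and the chain rule is exactly the point the paper treats briefly, and your local shell argument handles it correctly.
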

\begin{proof}
	By Theorem \ref{thm:Holder}, we have that $f$ is continuous, and by Theorem \ref{thm:nonzero}, we have that the image of $f$ does not meet zero. Hence, $h$ is well defined and continuous. We also easily see that $h \in W^{1,n}_\loc(\R^n, \R \times \S^{n-1})$, since if $B$ is a ball compactly contained in $\R^n \setminus \{0\}$, then $s\vert_{B}$ is a smooth bilipschitz chart. Hence, we have \eqref{enum:h_prop_sobolev}.
	
	We then note that since $s^{-1}$ is conformal, we have
	\[
		\abs{Dh} = \left(\abs{D s^{-1}} \circ f\right) \abs{Df} = \frac{\abs{Df}}{\abs{f}}.
	\]
	Hence, we have by Lemma \ref{lem:log_gradient_int} that $\abs{Dh} \in L^n(\R^n)$, proving \eqref{enum:h_prop_globalint}. Finally, we prove \eqref{enum:h_prop_distortion} by computing that
	\begin{multline*}
		\abs{Dh}^n
		= \left(\abs{D s^{-1}} \circ f\right) \abs{Df}
		\leq (J_{s^{-1}} \circ f) (KJ_f + \sigma^n \abs{f}^n)\\
		= K (J_{s^{-1}} \circ f) J_f + \frac{\sigma^n \abs{f}^n}{\abs{f}^n}
		= K J_h + \sigma^n.
	\end{multline*}
\end{proof}

\subsection{Integrability below the natural exponent}
According to Lemma~\ref{lem:mfld_valued_log_props}, the logarithmic mapping $h \colon \R^n \to  \R \times \S^{n-1}$ lies in $W^{1,n}_\loc(\R^n, \R \times \S^{n-1})$ and  it solves the distortion inequality,
\begin{equation}\label{eq:logdistineq}
\abs{Dh(x)}^n \le K J_h(x) +\sigma^n(x) \quad  \textnormal{ for } K\in [1, \infty) \textnormal{ and  a.e. } x\in \R^n \, .  
\end{equation}
Since $\abs{Dh}\in L^n (\R^n)$, the integral of the Jacobian $J_h$ over $\R^n$ vanishes. Therefore, the natural integral estimate for the logarithmic map over the entire space reads as follows
\[\int_{\R^n} \abs{Dh}^n \le \int_{\R^n} \sigma^n \, .   \]
The next lemma gives the key global integrability estimate for the differential below the natural exponent $n$.

\begin{lemma}\label{lem:lower_integrability}
	Suppose that a mapping $h \colon \R^n \to \R \times \S^{n-1}$ is continuous, and  that $h \in W^{1,n}_\loc(\R^n, \R \times \S^{n-1})$ with  $\abs{Dh} \in L^n(\R^n)$.  If $h$ satisfies the distortion inequality~\eqref{eq:logdistineq} with $\sigma \in L^{n-\eps}(\R^n) \cap L^{n}(\R^n)$ for some $\eps > 0$,  then there exists $\eps' = \eps'(n, K, \eps) \in (0, \eps)$ such that $\abs{Dh} \in L^{n-\eps'}(\R^n)$. In particular, we have the  estimate
	\[
		\int_{\R^n} \abs{Dh}^{n-\eps'}
		\leq C(\eps') \int_{\R^n} \sigma^{n-\eps'},
	\]
	where $C(\eps') \to 1$ as our choice of $\eps'$ tends to $0$.
\end{lemma}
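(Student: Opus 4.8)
The plan is to combine the Caccioppoli inequality of Lemma~\ref{lem:caccioppoli} for $(\R\times\S^{n-1})$-valued maps (available precisely because one factor of the target is Euclidean) with the distortion inequality~\eqref{eq:logdistineq}, and then to run a global variant of the Lipschitz truncation argument of Faraco and Zhong~\cite{Faraco-Zhong_Caccioppoli} in order to gain integrability below the exponent $n$. As a first step I would record the natural ``endpoint'' estimate: since $\abs{Dh}\in L^n(\R^n)$, Lemma~\ref{lem:zero_Jacobian} applied to $h$ gives $\int_{\R^n}J_h=0$, so integrating~\eqref{eq:logdistineq} over $\R^n$ yields $\int_{\R^n}\abs{Dh}^n\le\int_{\R^n}\sigma^n$. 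This is the case ``$\eps'=0$'', and the bound that the conclusion should interpolate down from.

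For the main estimate, write $g=\abs{Dh}$ and $u=h_\R$. For $\lambda>0$ set $U_\lambda=\{x:Mg(x)>\lambda\}$, where $M$ is the Hardy--Littlewood maximal operator; then $g\le\lambda$ a.e.\ on $\R^n\setminus U_\lambda$ and $u$ is $C_n\lambda$-Lipschitz there, so fix a $C_n\lambda$-Lipschitz McShane extension $u_\lambda\colon\R^n\to\R$ of $u\vert_{\R^n\setminus U_\lambda}$ with $\nabla u_\lambda=\nabla u$ a.e.\ on $\R^n\setminus U_\lambda$. Now apply Lemma~\ref{lem:caccioppoli} to the hybrid map $(u_\lambda,h_N)\in W^{1,n}_\loc(\R^n,\R\times\S^{n-1})$ against cutoffs $\eta_R$ exhausting $\R^n$: since $J_{(u_\lambda,h_N)}=J_h$ on $\R^n\setminus U_\lambda$, since $\abs{J_{(u_\lambda,h_N)}}\le C_n\lambda\, g^{n-1}$ on $U_\lambda$ ($u_\lambda$ being Lipschitz and $\abs{Dh_N}\le g$), and since the distortion inequality gives $KJ_h\ge g^n-\sigma^n$ on the good set, letting $R\to\infty$ should produce a good-$\lambda$ inequality of the shape
\[
	\int_{\R^n\setminus U_\lambda}g^n\le C_nK\,\lambda\int_{U_\lambda}g^{n-1}+\int_{\R^n\setminus U_\lambda}\sigma^n .
\]
One then multiplies by $\lambda^{-1-\eps'}$ and integrates in $\lambda$ over $(0,\infty)$ — first with $g$ replaced by $\min(g,m)$, or over $[\lambda_0,\infty)$, to keep every quantity finite — using the layer-cake formula, the $L^p$-boundedness of $M$ for $p=n-\eps'>1$, Hölder and Young inequalities, and the inclusion $\sigma\in L^{n-\eps'}(\R^n)$ coming from $\sigma\in L^{n-\eps}\cap L^n$ (valid for $\eps'<\eps$). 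This converts the good-$\lambda$ inequality into an a priori bound $\int g^{n-\eps'}\le c(n,K)\eps'\int g^{n-\eps'}+(1+O(\eps'))\int\sigma^{n-\eps'}$; taking $\eps'$ small enough that $c(n,K)\eps'<\tfrac12$, and also $\eps'<\eps$, $\eps'<n-1$, the first term is absorbed, leaving $\int\abs{Dh}^{n-\eps'}\le C(\eps')\int\sigma^{n-\eps'}$ with $C(\eps')\to1$ as $\eps'\to0$, consistently with the endpoint estimate.

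I expect the main obstacle to be making the truncation argument genuinely \emph{global}. The cutoffs $\eta_R$ produce boundary terms of the form $R^{-1}\int_{B_{2R}\setminus B_R}\abs{D(u_\lambda,h_N)}^{n-1}\abs{u_\lambda-c}$, and showing these vanish as $R\to\infty$ requires exploiting that $\int_{B_{2R}\setminus B_R}g^n\to0$ (since $g\in L^n(\R^n)$), that $\abs{U_\lambda\cap(B_{2R}\setminus B_R)}\to0$ (since $\abs{U_\lambda}<\infty$), and the Sobolev--Poincar\'e inequality to control $\abs{u_\lambda-c}$ on the relevant annulus — with $c$ a mean of $u_\lambda$ and the uniform bound $\int_{B_{2R}}\abs{\nabla u_\lambda}^n\le C_n(\lVert\nabla h_\R\rVert_n^n+\lambda^n\abs{U_\lambda})\le C_n\lVert Dh\rVert_n^n$ — all while balancing rates so that no spurious $R$- or $\lambda$-independent remainder survives. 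A secondary delicate point is handling the nonhomogeneous term so that the final right-hand side involves $\int\sigma^{n-\eps'}$ with constant tending to $1$ (which is why the $\sigma$-integral over the good set must be fed into the $\lambda$-integration rather than estimated crudely). The remaining ingredients — the Caccioppoli computation for the hybrid map, the Hadamard-type bound, the maximal-function estimates, and the Young-inequality absorption — are routine.
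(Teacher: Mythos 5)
Your outline follows essentially the same route as the paper: a global Faraco--Zhong Lipschitz truncation of the Euclidean coordinate via a McShane extension on the good set of a maximal function, the vanishing of the total Jacobian of the truncated hybrid map, multiplication by $\lambda^{-1-\eps'}$ and integration in $\lambda$, and absorption for small $\eps'$. There is, however, one step that fails as written: you set $g = \abs{Dh}$, and with that choice your own resolution of the nonhomogeneous term does not close. Feeding $\int_{\R^n \setminus U_\lambda} \sigma^n$ into the $\lambda$-integration produces (after normalizing by $\eps'$) a term of the form $\int_{\R^n} \sigma^n \, [\max(t, Mg)]^{-\eps'}$, and when $g = \abs{Dh}$ there is no way to dominate this by $\int_{\R^n} \sigma^{n-\eps'}$: the distortion inequality permits $\sigma$ to be large precisely where $Dh$ is small, so the weight $(Mg)^{-\eps'}$ can be arbitrarily large on a set where $\sigma$ is large, and no bound with constant tending to $1$ (indeed no bound in terms of $\int \sigma^{n-\eps'}$ at all) follows. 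The fix, which is exactly what the paper does, is to define $g = \abs{Dh} + \sigma$; then $\sigma \leq g \leq Mg$ almost everywhere, so $\int \sigma^n (Mg)^{-\eps'} \leq \int \sigma^{n-\eps'}$, while the $C_n\lambda$-Lipschitz bound on the good set and the pointwise bound $\abs{D(u_\lambda, h_{\S^{n-1}})} \leq (1+C_n) Mg$ are unaffected. With this single change the rest of your plan goes through.

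Two smaller remarks. The boundary-term analysis you anticipate for the cutoffs $\eta_R$ is not needed: since $\abs{\nabla u_\lambda} \leq C_n \lambda \leq C_n Mg$ on $U_\lambda$ and $\abs{\nabla u_\lambda} = \abs{\nabla h_\R} \leq g$ off it, the truncated map satisfies $\abs{D(u_\lambda, h_{\S^{n-1}})} \leq (1+C_n) Mg \in L^n(\R^n)$, so Lemma \ref{lem:zero_Jacobian} applies verbatim and yields $\int_{\R^n} J_{(u_\lambda, h_{\S^{n-1}})} = 0$; the cutoff argument you describe is precisely the proof of that lemma and need not be redone. The paper then splits off a tail term controlled by $\int_{\R^n \setminus B_r} (Mg)^n$, discarded only at the end as $r \to \infty$. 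Finally, your caveat about keeping quantities finite is essential and must actually be implemented: the absorption of the term $c(n,K)\,\eps' \int \abs{Dh}^{n-\eps'}$ is legitimate only when that integral is known to be finite, which a priori it is not over all of $\R^n$; the paper restricts to the finite-measure set $B_r \setminus F_t$ (with $F_t$ the good set at level $t$) before absorbing and only afterwards lets $r \to \infty$ and $t \to 0$, and your proposed restriction to $\lambda \geq \lambda_0$, i.e.\ to the finite-measure bad set $U_{\lambda_0}$, plays the same role and should be carried through consistently, including in the maximal-function estimates.
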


\begin{proof}
	We may assume $\eps < 1$. We denote $h = (h_\R, h_{\S^{n-1}})$, where $h_\R \colon \R^n \to \R$ and $h_{\S^{n-1}} \colon \R^n \to \S^{n-1}$. Let
	\[
		g = \abs{Dh} + \sigma,
	\]
	and for every $\lambda > 0$, let
	\[
		F_\lambda = \{x \in \R^n : M(g)(x) \leq \lambda\}.
	\]
	
	Suppose that $x, y \in F_\lambda$. Then by a pointwise Sobolev estimate, we have for every $i \in \{1, \dots, n\}$ that
	\begin{multline*}
		\abs{h_{\R}(x) - h_{\R}(y)} \leq C_n \abs{x - y} (M(\abs{\nabla h_{\R}})(x) + M(\abs{\nabla h_{\R}})(y))\\
		\leq C_n \abs{x - y} (M(g)(x) + M(g)(y)) \leq C_n \lambda \abs{x - y}.
	\end{multline*}
	Hence, $h_{\R}$ is $C_n \lambda$-Lipschitz in $F_\lambda$. Consequently, by using the McShane extension theorem~\cite{McShane}, we find a $C_n \lambda$-Lipschitz map $h_{\R, \lambda} \colon \R^n \to \R$ such that $h_{\R, \lambda} \vert_{F_\lambda} = h_{\R} \vert_{F_\lambda}$. We denote $h_\lambda = (h_{\R, \lambda}, h_{\S^{n-1}})$.
	
	We point out that we have
	\[
		\abs{D h_\lambda} \leq (1+ C_n) M(g)
	\]
	a.e.\ in $\R^n$. Indeed, we have $\abs{D h_\lambda} = \abs{Dh} \leq g \leq M(g)$ a.e.\ in $F_\lambda$, and since $\abs{\nabla h_{\R, \lambda}} \leq C_n \lambda$, we also have $\abs{D h_\lambda} \leq \abs{Dh} + C_n \lambda \leq (1+C_n) M(g)$ a.e.\ in $\R^n \setminus F_\lambda$. Since $g \in L^n(\R^n)$, we also have $M(g) \in L^n(\R^n)$, and therefore $\abs{D h_\lambda} \in L^n(\R^n)$. Hence, we may apply the case of Lemma \ref{lem:zero_Jacobian} with a manifold target, obtaining that 
	\[
		\int_{\R^n} J_{h_\lambda} = 0.
	\]
	
	For $r > 0$, we denote $B_r = B^n(0, r)$. Since $J_h = J_{h_\lambda}$ in $F_\lambda$, we may therefore estimate that
	\begin{align*}
		\abs{\int_{B_r \cap F_\lambda} J_h}
		&= \abs{\int_{\R^n \setminus (B_r \cap F_\lambda)} J_{h_\lambda}}\\
		&\leq \abs{\int_{B_r \setminus F_\lambda} J_{h_\lambda}}
			+ \abs{\int_{\R^n \setminus B_r} J_{h_\lambda}}\\
		&\leq (1+C_n)^n \left( 
			\int_{B_r \setminus F_\lambda} \lambda 	M(g)^{n-1} 
			+ \int_{\R^n \setminus B_r} M(g)^n\right). 
	\end{align*}
	Moreover, since $\abs{Dh}^n \leq K J_h + \sigma^n$, we have
	\[
		\int_{B_r \cap F_\lambda} \abs{Dh}^n 
		\leq K\abs{\int_{B_r \cap F_\lambda} J_h}
		+ \int_{B_r \cap F_\lambda} \sigma^n.
	\]
	We now chain these estimates together, and multiply by $\lambda^{-1-\eps'}$, where $\eps' \in (0, \eps)$. We obtain
	\begin{multline}\label{eq:li_step1}
		\int_{B_r} \abs{Dh}^n \lambda^{-1-\eps'} \chi_{F_\lambda}
		\leq \int_{B_r} \sigma^n \lambda^{-1-\eps'} 
			\chi_{F_\lambda}\\
		+  (1 + C_n)^n K \left(\int_{B_r} \lambda^{-\eps'}M(g)^{n-1} 
			\chi_{\R^n \setminus F_\lambda}
			+ \lambda^{-1-\eps'} \int_{\R^n \setminus B_r} M(g)^n\right).
	\end{multline}
	
	Let $t > 0$. We now integrate \eqref{eq:li_step1} from $t$ to $\infty$ with respect to $\lambda$, and use the Fubini--Tonelli theorem to switch the order of integration. Observe that $\chi_{F_\lambda}(x) = 0$ if $\lambda < M(g)(x)$, and $\chi_{F_\lambda}(x) = 1$ otherwise. Hence,
	\begin{multline*}
		\int_{t}^\infty \lambda^{-1-\eps'} \chi_{F_\lambda}(x) \dd \lambda
		= \int_{\max(t, Mg(x))}^\infty \lambda^{-1-\eps'} \dd \lambda\\
		= \frac{\left[\max(t, Mg(x))\right]^{-\eps'}}{\eps'}
		\leq \frac{\left[Mg(x)\right]^{-\eps'}}{\eps'}
	\end{multline*}
	for a.e.\ $x \in \R^n$. Moreover, we also have
	\[
		\int_{t}^\infty \lambda^{-\eps'} 
			\chi_{\R^n \setminus F_\lambda}(x) \dd \lambda
		= \begin{cases}
		\displaystyle{\int_t^{M(g)(x)} \lambda^{-\eps'}} 
		& \text{if } Mg(x) > t\\
			0 \phantom{\displaystyle{\int}}
		& \text{if } Mg(x) \leq t
		\end{cases},
	\]
	and hence
	\begin{multline*}
		\int_{t}^\infty \lambda^{-\eps'} \chi_{\R^n \setminus F_\lambda}(x) \dd \lambda
		= \chi_{\R^n \setminus F_t}(x)  \int_{t}^{Mg(x)} \lambda^{-\eps'} \dd \lambda\\
		= \chi_{\R^n \setminus F_t}(x)\frac{\left[Mg(x)\right]^{1-\eps'} - t^{1-\eps'}}{1-\eps'} 
		\leq \chi_{\R^n \setminus F_t}(x) \frac{\left[Mg(x)\right]^{1-\eps'}}{1 - \eps'} 
	\end{multline*}
	for a.e.\ $x \in \R^n$. In conclusion, we obtain the estimate
	\begin{multline}\label{eq:li_step2}
		\int_{B_r} \abs{Dh}^n \left[\max(M(g), t)\right]^{-\eps'}
		\leq \int_{B_r} \sigma^n M(g)^{-\eps'}\\
		+ (1 + C_n)^n K \left( \frac{\eps'}{1 - \eps'} \int_{B_r \setminus F_t} M(g)^{n-\eps'}
		+ t^{-\eps'}\int_{\R^n \setminus B_r} M(g)^n \right).
	\end{multline}
	
	We then further estimate some of the terms in \eqref{eq:li_step2}. On the left hand side, we observe that if $x \notin F_t$, then $M(g)(x) > t$, and therefore $\max(M(g)(x), t) = M(g)(x)$. Hence, we have
	\[
		\int_{B_r} \abs{Dh}^n \left[\max(M(g), t)\right]^{-\eps'}
		\geq \int_{B_r \setminus F_t} \abs{Dh}^n M(g)^{-\eps'}.
	\]
	For the first term on the right hand side, we see from the definition of $g$ that $\sigma \leq g \leq M(g)$, and therefore obtain
	\[
		\int_{B_r} \sigma^n M(g)^{-\eps'}
		\leq \int_{B_r} \sigma^{n-\eps'}.
	\]
	For the remaining terms, we use the strong Hardy--Littlewood maximal inequality, where the same constant $M_n$ can be used for all exponents in the interval $[n-1, n]$. Moreover, we also estimate the third term by $g^{n-\eps'} \leq 2^{n} ( \abs{Dh}^{n-\eps'} + \sigma^{n-\eps'} ).$ After all these estimates of individual terms, we obtain a total estimate of the form
	\begin{multline}\label{eq:li_step3}
		\int_{B_r \setminus F_t} \abs{Dh}^n M(g)^{-\eps'}
		\leq \left( 1 + \frac{(2+2C_n)^n K \eps'}{1 - \eps'}\right) \int_{B_r} \sigma^{n-\eps'}\\
		+ \frac{(2+2C_n)^n K \eps'}{1 - \eps'} \int_{B_r \setminus F_t} \abs{Dh}^{n-\eps'}
		+ (2+2C_n)^n K t^{-\eps'} \int_{\R^n \setminus B_r} g^n.
	\end{multline}
	
	We then use Young's inequality to obtain the estimate
	\begin{align*}
		&\int_{B_r \setminus F_t} \abs{Dh}^{n-\eps'}\\
		&\qquad= \int_{B_r \setminus F_t} \Bigl( \abs{Dh}^{n-\eps'} 
			M(g)^{-\frac{\eps'(n-\eps')}{n}} \Bigr)
			\Bigl( M(g)^{\frac{\eps'(n-\eps')}{n}} \Bigr)\\
		&\qquad\leq \frac{(n-\eps')}{n}
			\int_{B_r \setminus F_t} \abs{Dh}^n M(g)^{-\eps'}
			+ \frac{\eps'}{n}
			\int_{B_r \setminus F_t} M(g)^{n-\eps'}\\
		&\qquad\leq \int_{B_r \setminus F_t} \abs{Dh}^n M(g)^{-\eps'}
			+ \eps' M_n 2^n \int_{B_r \setminus F_t} \left( \abs{Dh}^{n-\eps'}
				+ \sigma^{n-\eps'} \right).
	\end{align*}
	Hence, combining this with \eqref{eq:li_step3}, we now have
	\begin{multline}\label{eq:li_step4}
		\int_{B_r \setminus F_t} \abs{Dh}^{n-\eps'}
			\leq \left( 1 + \delta\right) \int_{B_r} 
				\sigma^{n-\eps'}
		+ \delta \int_{B_r \setminus F_t} \abs{Dh}^{n-\eps'}\\
		+ (2+2C_n)^n K t^{-\eps'} \int_{\R^n \setminus B_r} g^n,
	\end{multline}
	where $\delta = 2^n \eps'((1+C_n)^nK/(1-\eps') + M_n)$. We then select $\eps'$ small enough that $\delta < 1$. Since $B_r \setminus F_t$ is of finite measure, $\abs{Dh}^{n-\eps'}$ is integrable over it, and we may absorb its term from the right hand side of \eqref{eq:li_step4} to the left hand side. We obtain the estimate
	\[
		\int_{B_r \setminus F_t} \abs{Dh}^{n-\eps'}
		\leq \frac{1 + \delta}{1 - \delta} \int_{B_r} \sigma^{n-\eps'}
		+ \frac{(2nC_n)^n K t^{-\eps'}}{1-\delta} \int_{\R^n \setminus B_r} g^n.
	\]
	We let $r \to \infty$. Since $g \in L^n(\R^n)$, this makes the final term vanish, yielding
	\[
		\int_{\R^n \setminus F_t} \abs{Dh}^{n-\eps'}
		\leq \frac{1 + \delta}{1 - \delta} \int_{\R^n}  \sigma^{n-\eps'} < \infty.
	\]
	Note that we may assume that $\bigcup_{t > 0} \R^n \setminus F_t = \R^n$. Indeed, otherwise $M(g)$ has a zero; this is possible only if $g \equiv 0$, in which case $h$ is constant and the claim is trivial. Hence, by letting $t \to 0^+$, the claim follows.
\end{proof}

\subsection{Proof of the Liouville theorem}

It remains to complete the proof of Theorem \ref{thm:Liouville}. We recap the statement before the proof.

\begin{customthm}{\ref{thm:Liouville}}
	Suppose that $f \in W^{1,n}_\loc(\R^n, \R^n)$ satisfies the heterogeneous distortion inequality \eqref{eq:main_definition} with $K \in [1, \infty)$ and $\sigma \in L^{n-\eps}(\R^n) \cap L^{n+\eps}(\R^n)$, for some $\eps > 0$. If $f$ is bounded and $\lim_{x \to \infty} \abs{f(x)} = 0$, then $f \equiv 0$.
\end{customthm}

\begin{proof}
	Suppose towards to a contradiction that $f$ is bounded and  $\lim\limits_{x \to \infty} \abs{f(x)} = 0$, but  $f$ is not identically zero. By Theorems \ref{thm:Holder} and \ref{thm:nonzero}, we have that $f$ is continuous and has no zeros. Hence, we may define the ``logarithmic''  mapping $h \colon \R^n \to \R \times \S^{n-1}$ by
	\[
		h(x) = \left( \log \abs{f}, \frac{\abs{f}}{\abs{f}} \right). 
	\]
	By Lemma \ref{lem:mfld_valued_log_props}, we have that $h \in W^{1,n}_\loc(\R^n, \R \times \S^{n-1})$, $\abs{Dh} \in L^n(\R^n)$, and $\abs{Dh}^n \leq K J_h + \sigma^n$. Combining this with  Lemma \ref{lem:lower_integrability} we conclude that $\abs{Dh} \in L^{n-\eps'}(\R^n)$ for some $\eps' > 0$. In particular, since $\abs{\nabla \log \abs{f}} \leq \abs{Dh}$, we have 
	\begin{equation}\label{eq:logintegrability}
	\int_{\R^n} \abs{\nabla \log \abs{f}}^{n-\eps'}    \le  \int_{\R^n}  \abs{Dh}^{n-\eps'}  \le C(\eps') \int_{\R^n} \sigma^{n-\eps'} \, . 
	\end{equation}
	
	Consider now balls of the form $B_i = B^n(0, 2^i)$. Our goal is to show that the integral average of $\abs{\log \abs{f}}$ over $B_i$, denoted by $ (\log \abs{f})_{B_{i}} $, is  bounded independently of $i\in \mathbb N \cup \{0\}$. By the Sobolev-Poincar\'e inequality~\cite[4.5.2]{Evans-Gariepy-book} and~\eqref{eq:logintegrability}  we have
	\begin{multline*}
		\abs{(\log \abs{f})_{B_{i-1}} - (\log \abs{f})_{B_{i}}}
		\leq \frac{2^n}{m_n(B_i)}\int_{B_i} 
			\abs{\log \abs{f} - (\log \abs{f})_{B_{i}}}\\
		\leq C_n 2^n 2^{i} \left( \frac{1}{m_n(B_i)} \int_{B_i}
			\abs{\nabla \log \abs{f}}^{n-\eps'} \right)^\frac{1}{n-\eps'}\\
		\leq C_n 2^{n} 2^{i - \frac{ni}{n-\eps'}}\omega_n^{-\frac{1}{n-\eps'}}
			\left( \int_{\R^n}
			\abs{\nabla \log \abs{f}}^{n-\eps'} \right)^\frac{1}{n-\eps'}\\
		\leq C_{n'} C(\eps') 2^n \max(1, \omega_n^{-n}) 2^{-\frac{\eps'}{n-\eps'}i}
			\left( \int_{\R^n} \sigma^{n-\eps'} \right)^\frac{1}{n-\eps'}.
	\end{multline*}
	Consequently, we have that
	\begin{multline*}
		\abs{(\log \abs{f})_{B_{i}} - (\log \abs{f})_{B_{0}}}\\
		\leq C_{n} C(\eps') 2^n \max(1, \omega_n^{-n}) 
		\left( \int_{\R^n} \sigma^{n-\eps'} \right)^\frac{1}{n-\eps'}
		\sum_{i=0}^\infty 2^{-\frac{\eps'}{n-\eps'}i}
		< \infty.
	\end{multline*}
	Since $\log \abs{f} \in L^1_\loc$ by Lemma \ref{lem:log_f_is_Sobolev}, we have $\abs{(\log \abs{f})_{B_{0}}} < \infty$. However, since $\lim\limits_{x \to \infty} f(x) = 0$, we have $\lim_{i \to \infty} (\log \abs{f})_{B_{i}} = -\infty$. This leads to a contradiction, and the claim follows.
\end{proof}

%%%%%%%%%%%%%%%%%%%%%%%%%%%%%%%%%%%%%%% Bibliography %%%%%%%%%%%%%%%%%%%%%%%%%%%%%%%%%%%%%%%

\bibliographystyle{abbrv}
\bibliography{sources}

\end{document}